\definecolor{bleu1}{RGB}{0,57,128}
\newtheorem{Main}{\bg Theorem}
\def\tk{\tilde{k}}
\def\bk{\black}
\newtheorem{Coro}{\bg Corollary}
\definecolor{dgreen}{rgb}{0.1,0.6,0.1}
\definecolor{bluegray}{rgb}{0.4, 0.6, 0.8}
\def\bg{\color{bleu1}}
\def\bk{\color{black}}
\newtheorem{theorem}{\bg Theorem}                             
\newtheorem{lemma}{\bg Lemma}
\newtheorem{proposition}{\bg Proposition}
\newtheorem{remark}{\bg Remark}
\numberwithin{equation}{section}
\def\bg{\color{bleu1}}
\def\author#1{\gdef\autrun{\def\and{\unskip, }#1}\gdef\@author{#1}}
\def\address#1{{\def\and{\\\hspace*{18pt}}\renewcommand{\thefootnote}{}%
\footnote {#1}}%
}
\def\email#1{e-mail: #1}
\def\keywords#1{\par\medskip
\noindent\textbf{Keywords.} #1}
\renewenvironment{proof}{\noindent {\textit{\bg Proof.}}}{\qed\vskip 0.2cm}
\newenvironment{proofof}[1]{\par
  \pushQED{\qed}%
  \normalfont \topsep6\p@\@plus6\p@\relax
  \trivlist
  \item[\hskip\labelsep
    \textit{\bg Proof of #1\@addpunct{.}}]\ignorespaces
}{%
  \popQED\endtrivlist\@endpefalse
}
\newcommand{\etalchar}[1]{$^{#1}$}
\newcommand{\C}{\mathbb C}
\newcommand{\R}{\mathbb R}
\newcommand{\Z}{\mathbb Z}
\newcommand{\N}{\mathbb N}
\newcommand{\T}{\mathbb{T}}
\newcommand{\cT}{\mathcal{T}}
\newcommand{\im}{\mathop{\mbox{Im}}}
\renewcommand{\inf}{\mathop{\mbox{{inf}}}
	\renewcommand{\lim}{\mathop{\mbox{lim}}}
	\renewcommand{\max}{\mathop{\mbox{ma}x}}}
\providecommand{\abs}[1]{\lvert#1\rvert}	
\providecommand{\norm}[1]{\lVert#1\rVert}
\newcommand{\om}{\omega} 
\newcommand{\cO}{\mathcal O} 
\begin{document}


\baselineskip=17pt



\title{\bg Instabilities for analytic quasi-periodic invariant tori}

\author{Gerard Farré \and Bassam Fayad}

\date{}

\maketitle

\address{G. Farré: KTH, Department of Mathematics; \email{gerardfp@kth.se}
\and
B. Fayad: CNRS, IMJ-PRG; \email{bassam.fayad@imj.prg.fr}}


\begin{abstract}

We prove the existence  of real analytic Hamiltonians with topologically unstable quasi-periodic invariant tori. Using various versions of our examples, we solve the following problems in the stability theory of analytic quasi-periodic motion: 
\begin{itemize}
\item[$i)$] \bk Show the existence of topologically unstable  tori of arbitrary frequency. Moreover, the Birkhoff Normal Form at the invariant torus can be chosen to be convergent, equal to a planar or non-planar polynomial. 
\item[$ii)$] \bk Show  the optimality of the exponential stability for Diophantine tori.
\item[$iii)$] \bk Show the existence of real analytic Hamiltonians that are integrable on half of the phase space, and such that all orbits on the other half accumulate at infinity.
\item[$iv)$] \bk For sufficiently Liouville vectors, obtain invariant tori that are not accumulated by a positive measure set of quasi-periodic invariant tori.  

\end{itemize}
\keywords{Hamiltonian systems, quasi-periodic invariant tori, stability, Birkhoff normal forms, Nekhoroshev theory, KAM theory.}

\end{abstract}
\section{\large{\bf{Introduction}}}

Let  $H$ be a $C^2$ function defined on $\T^d \times \R^d$ and consider its Hamiltonian vector field $X_{H}(\theta, r)=(\partial_{r}H(\theta,r), -\partial_{\theta}H(\theta, r))$. 
If for some $\omega \in \R^d$, we have
\begin{equation} \tag{{$*$}}
H(\theta,r)= \langle \omega,r\rangle+\cO(r^2) \label{HH},
\end{equation}
then $\cT_0=\T^d \times \{0\} $ is invariant under the Hamiltonian flow $\Phi^t_{H}$
and the induced dynamics on this torus  is the translation of frequency vector $\omega$
$: \theta \mapsto \theta+t\omega$. 
Moreover this torus is Lagrangian with respect to the 
canonical symplectic form $d \theta \wedge d r$ on $\T^d \times \R^d$. 

In this work, we will mainly be interested in the non-resonant case, where the coordinates of $\omega$ are rationally independent, in which case the torus $\cT_0$ can be seen as the closure of any orbit that starts on $\cT_0$. We call such an invariant torus a quasi-periodic torus of the Hamiltonian $H$, and for short, a QP torus.  
 
The study of the stability properties of a QP torus is an old  problem of classical mechanics,
especially in relation to the N-body problem of celestial mechanics. There exist three different notions of stability.   
The usual topological or Lyapunov stability, the stability in a measure theoretic or probabilistic sense (KAM stability),
and the effective stability or quantitative stability in time.

In this paper, we  will use variants of the approximation by conjugation method (AbC or Anosov-Katok method) to construct several examples with various instability properties of QP tori of a real analytic Hamiltonian, from all three  points of view and in relation with the main known results and open questions in the field.

We show in particular the existence  of real analytic Hamiltonians with topologically unstable quasi-periodic invariant tori with arbitrary frequencies. We also show sharpness of several results in Nekhoroshev and KAM theory.

In the AbC method, diffeomorphisms or flows of a manifold are constructed as limits of conjugates of diffeomorphisms or periodic flows. Volume preserving maps with various interesting, sometimes surprising, topological and ergodic properties can be obtained as limits of volume preserving periodic transformations that are conjugates {\it via} wild conjugacies to a simple periodic action on the manifold. For a general overview of the conjugation by approximation method we refer the reader to \cite{faka}.

In the Hamiltonian setting, by periodic approximations we mean that the flow in some of the angle variables will be approached by periodic flows, which causes instabilities and drift in the action coordinates. More precisely,  all our examples will be of the following form (see Section  \ref{sec_constructions} for complete details):
\begin{equation} 
\label{HaHaHaHa}
 H=\lim_{n\rightarrow \infty}{H_n}, \quad H_n(\theta,r)=\langle  \om(r_d),r \rangle -\sum_{j=2}^{n}\phi_j(r_d) \sin (2\pi {\small{ \sum_{i=1}^{d-1}{k_{j,i}\theta_i}} }).
\end{equation}
Here $\{k_j\}$ is a sequence of vectors in $\Z^{d-1}$, $\om(\cdot)$ is a function from $\R$ to $\R^d$, and the $\phi_j(\cdot)$ are positive functions, that will be chosen  adequately to guarantee the convergence of the sequence $H_n$ and the various properties of our examples. 
Note that $r_d$ is constant under the Hamiltonian flows we are considering and acts as a {\it parameter} in the constructions, in a way that we now explain. Note also that the flows are explicitly solvable, for $r_d$ near the origin, and are conjugated to  $H_0=\langle  \om(r_d),r \rangle$. Namely, there exists an explicit canonical transformation $\Psi_n$ such that $H_n= H_0 \circ \Psi_n$ (see Section \ref{sec.bnf} for the explicit form of $\Psi_n$). The wild behavior of the conjugacies $\Psi_n$ can be caused by two possible scenarios: 
\begin{itemize}
\item[i)] The frequency vector $\om(r_d)\equiv \tilde{\omega}$ is a constant Liouville vector (see Section \eqref{sec.notations} and \eqref{eq_liouville_cond} for the definitions) and the sequence $\{k_j\}$ is a sequence of almost resonant vectors with respect to the frequency $\tilde{\omega}$. 
\item[ii)] As $r_d$ goes to zero, the frequency vector $\om(r_d)$ goes through resonances  that correspond to the sequence $\{ k_j \}$.
\end{itemize}
The AbC method was first applied in the Hamiltonian context by Katok in \cite{K73}. Recent applications of the AbC method in the Hamiltonian context that inspired this work  can be found in \cite{EFK,FSpoint}.

\vspace{0.2cm}

\section{\large{\bf{Notations}}} \label{sec.notations}

Let us introduce some notations that will be useful throughout the paper.

\begin{itemize}

\item For any vector $v=(v_1, \ldots, v_d )\in \R^d$ we will denote $\norm{v}:=\max_{1 < m \leq d}{\abs{v_m}}$.  

\item We denote by $\T^d_{\rho}$ the complex $\rho$-neighbourhood of a 
standard real $d$ dimensional torus 
\begin{equation*}
 \T^d_{\rho}=\left\{z \in \C^d / \Z^d \; | \; \abs{\im z_i} < \rho, \; 1\leq i\leq d\right\}.
\end{equation*}
We denote by $B_{\Delta ,\rho}$ the complex $\rho$-neighbourhood of the closed ball 
$B_{\Delta} \subset \R^d$ centered at the origin with radius $\Delta>0$,
\begin{equation*}
 B_{\Delta,\rho}=\left\{z \in \C^d \; | \; \exists z'\in B_\Delta \; s.t \; \abs{z-z'} < \rho \right\}. 
\end{equation*}
 We will also denote $D_{\Delta, \rho}=  \T^{d}_{\rho} \times B_{\Delta, \rho}$.

\item A holomorphic function $f$ defined on $D_{\Delta,\rho}$ is said to be real if  it gives real values to real arguments. 
 We will  denote by $C^{\omega}_{\Delta,\rho}$ the real and bounded holomorphic functions $f:D_{\Delta, \rho} \rightarrow \C$, 
which form a Banach space with the supremum norm
\begin{equation*}
\norm{f}_{\Delta, \rho}=\sup_{z\in D_{\Delta,\rho}}{\abs{f(z)}}.
\end{equation*}
By $C^{\omega}_{0,\rho}$ we denote the subset of functions of $C^{\omega}_{\Delta,\rho}$ that depend only on $\theta$.  We will denote by $C^{\omega}$ the real holomorphic entire functions and $C^{\omega}_{\rho}:=\bigcap_{\Delta>0}{C^{\omega}_{\Delta, \rho}}$. 
Recall that with the compact-open topology both are a Fr\'echet spaces. 
In particular we will use that convergence in $C^{\omega}_{\Delta,\rho} \; \forall \Delta,\rho>0$ implies convergence in $C^{\omega}$, and that convergence in $C^{\omega}_{\Delta,\rho} \; \forall \Delta>0$ for a fixed $\rho>0$ implies convergence in $C^{\omega}_{\rho}$. 

\item {\it Formal power series.} Let $z=(z_1,\dots,z_d) \in \C^d$. An element 
$$f\in \mathcal C^{\omega}(\T^d_\rho)[[z]]$$
is a formal power series
$$f=f(\theta,z)=\sum_{j \in \N^d} a_j(\theta) z^j$$
whose coefficients $a_j\in C^{\omega}_{0,\rho}$ 
(possibly vector valued). 

\item Given a vector $v=(v_1, \ldots, v_d)\in \R^{d}$ we denote by
$\tilde{v}:=(v_1,\ldots, v_{d-1})\in \R^{d-1}$ the new vector obtained by omitting the last component.
Similarly for a map $f:\R\rightarrow \R^d$ we will denote by $\tilde{f}:\R \rightarrow \R^{d-1}$
the corresponding map where the last component is omitted.

\item We will usually denote the last component of $r=(r_1, \ldots, r_d)\in \R^{d}$ by $s:=r_d$ to distinguish it from the 
rest of the components. We do so to stress the fact that in our constructive methods $s$ plays the role of a parameter, it does not change with time. 
This happens because all the Hamiltonians we consider will not depend on $\theta_d$, and thus satisfy $\dot{s}=-\frac{\partial H}{\partial \theta_d}=0$ (see Section \ref{sec_constructions}).

\item We call $\omega$ a Diophantine vector of exponent $\tau> 0$ and constant $\gamma>0$ if
\begin{equation*}
\abs{ \langle \omega, k \rangle} \geq \frac{\gamma}{\norm{k}^{\tau}}, \quad  \forall \; k\in \Z^{d}\setminus \{0\}. 
\end{equation*}
We denote by  $\Omega^{d}_{\gamma, \tau} $ the set of all such vectors.
Recall that for any $\tau>d-1$, the set of all Diophantine vectors of exponent $\tau$ : $\Omega^{d}_{\tau}:=\bigcup_\gamma \Omega^{d}_{\gamma, \tau}$ has full Lebesgue measure. A non-resonant vector that is not Diophantine is called a Liouville vector. 

\end{itemize}
 \section{\bf A brief reminder on Birkhoff normal forms and KAM stability.}

{ \subsection{Birkhoff normal forms.} }

We say that $H$ as in \eqref{HH} has a normal form $N_H$, if   $N_H$ is a formal power series in $r$ (possibly with 0 radius of convergence) and there exists a formal power series 
$$f\in\mathcal C^{\omega}(\T_{\rho}^d)[[r]]\cap \cO^2(r)$$
such that
$$H(\theta, r+\partial_{\theta} f(\theta,r))=N_H(r).$$
If a normal form exists at a QP torus (non-resonant by our definition) it is unique. It is then called the Birkhoff normal form
of $H$ at the QP torus (we refer to \cite{Bi} or \cite{SM} for more details on Birkhoff normal forms). A classical result is that when $H$ is as in \eqref{HH} and $\omega$ is Diophantine, the normal form exists and is unique.

\subsection{Non-degenerate Birkhoff Normal Forms and KAM stability. } \bk 

A QP torus of a Hamiltonian system is said to be KAM stable if it is accumulated by a positive measure of QP tori,
and if the set of these tori has Lebesgue density one at the original torus.
 We say that a formal power series $N_H$ is {\it non-degenerate} or {\it non-planar} if there does not exist any vector  $\gamma$ such that for every   $r$ in some neighborhood of $\cT_0$, $\langle \nabla N_H(r),\gamma\rangle=0$. The following was proven in \cite{EFK}.

\begin{Main}\label{mB}
If $N_H$ exists, is unique  and  is non-degenerate,  then $\cT_0$ is KAM stable. 
 In particular, this is the case if $\omega$ is Diophantine and if $N_H$ is non degenerate.\end{Main}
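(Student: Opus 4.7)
The plan is to combine a finite-order Birkhoff reduction with a parametrized KAM theorem for non-planar frequency maps (Rüssmann's theorem), together with a rescaling argument that upgrades positive measure to density one at $\cT_0$.

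First I would fix an arbitrary integer $N$ and use the existence of the formal series $N_H$ to construct, step by step, a real analytic canonical transformation $\Psi_N$ defined on a neighborhood of $\cT_0$ such that
$$H \circ \Psi_N(\theta,r) = N_H^{(N)}(r) + R_N(\theta, r), \qquad R_N = \mathcal{O}(\lVert r\rVert^{N+1}),$$
where $N_H^{(N)}$ is the truncation of $N_H$ at order $N$. At each order, the required cohomological equations are solvable because the truncated jet of the formal series $f$ giving $N_H$ is already available; the denominators appearing are finitely many nonzero real numbers, so no Diophantine condition on $\omega$ is needed to go up to any fixed order. When $\omega$ is Diophantine, this is just the classical Birkhoff procedure, which yields both existence and uniqueness of $N_H$ automatically — this is where the ``in particular'' clause will come from.

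Next I would translate the non-degeneracy hypothesis into a Rüssmann non-degeneracy condition for the polynomial frequency map $r \mapsto \omega_N(r) := \nabla N_H^{(N)}(r)$. By assumption the image of $\nabla N_H$ is not contained in any hyperplane, so there exist multi-indices $\alpha_1, \dots, \alpha_d$ such that the vectors $\partial^{\alpha_j} \nabla N_H(0)$ span $\R^d$. Choosing $N$ larger than $\max_j |\alpha_j|$, the same holds for $\omega_N$ on a small ball around the origin, so $\omega_N$ is non-planar in the Rüssmann sense on that ball.

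The third step is the rescaling. I would set $r = \varepsilon^2 \rho$, rescale time by $\varepsilon^{-2}$, and expand $\omega_N(\varepsilon^2 \rho)$ around $0$. After rescaling, the integrable part becomes a non-planar polynomial in $\rho$ of a fixed degree with Rüssmann index independent of $\varepsilon$, while the perturbation $R_N$ becomes, in an appropriate complex strip, of size $\mathcal{O}(\varepsilon^{2(N+1)-M})$ for some $M$ depending only on $N$ and $d$. Choosing $N$ large, the perturbation is arbitrarily small in the analytic topology on any fixed ball. Rüssmann's KAM theorem then produces a Cantor set of surviving invariant tori whose relative Lebesgue measure in the unit $\rho$-ball tends to $1$ as $\varepsilon \to 0$; pulling back through $\Psi_N$ and undoing the rescaling yields a family of QP tori accumulating $\cT_0$ with Lebesgue density one, which is exactly KAM stability.

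The hard part will be Step~3: one has to choose the truncation order $N$ large enough so that both the perturbation beats the $\varepsilon$-dependent KAM smallness threshold dictated by the Rüssmann index of $\omega_N$, \emph{and} so that the measure estimate in Rüssmann's theorem yields \emph{density one} (not just positive measure) at the origin. Controlling the dependence of the KAM constants on $N$ and on the Rüssmann index is the delicate bookkeeping that makes this work; the remaining steps are essentially formal consequences of the hypotheses.
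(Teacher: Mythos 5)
A preliminary caveat: the paper does not prove this statement at all --- Theorem \ref{mB} is quoted as a result imported from \cite{EFK} ("The following was proven in \cite{EFK}"), so there is no in-paper proof to compare against. Your proposal does follow the same general route as \cite{EFK}: truncate the formal conjugacy to get $H\circ\Psi_N=N_H^{(N)}+\mathcal{O}(\lVert r\rVert^{N+1})$, observe that non-planarity is witnessed by finitely many derivatives $\partial^{\alpha_j}\nabla N_H(0)$ and hence survives truncation at high enough order, blow up a neighbourhood of $\cT_0$, apply a R\"ussmann-type KAM theorem, and use a dyadic covering to upgrade the measure estimate to density one. Steps 1 and 2 are essentially correct, although your remark that the cohomological equations involve ``finitely many nonzero denominators'' is off (each order involves the full set of small divisors $\langle\omega,k\rangle$, $k\in\Z^d$); it is harmless only because the hypothesis that $N_H$ exists already hands you a formal conjugacy $f$ with analytic coefficients, so you never have to solve those equations.

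The genuine gap is in Step 3, precisely where you assert that after the rescaling $r=\varepsilon^2\rho$ the integrable part is non-planar ``with R\"ussmann index independent of $\varepsilon$.'' The rescaled frequency map is $\rho\mapsto\omega+\varepsilon^2\nabla Q(\rho)+\mathcal{O}(\varepsilon^4)$: it converges to the constant map $\omega$ as $\varepsilon\to 0$, so while the \emph{order} of the derivatives witnessing non-planarity stabilizes, every quantitative transversality constant entering R\"ussmann's theorem --- and hence both the KAM smallness threshold and the measure of the surviving Cantor set --- degenerates like a power of $\varepsilon$. This is exactly the ``singular perturbation problem'' the paper alludes to when introducing Theorem \ref{mB}, and resolving it is the actual content of \cite{EFK}: one needs a KAM theorem with parameters in which the dependence of the threshold and of the measure estimates on the non-degeneracy constants is explicit and polynomial, so that the gain $\varepsilon^{2N}$ from a high-order truncation beats a loss $\varepsilon^{-M_0}$ with $M_0$ depending only on the minimal order of non-planarity and not on $N$. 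As written, your argument fails at the point where you invoke an off-the-shelf R\"ussmann theorem with $\varepsilon$-independent constants; the ``delicate bookkeeping'' you defer to the end is not bookkeeping but the heart of the theorem.
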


The condition that $N_H$ is non-degenerate is essentially equivalent to R\"ussmann's non-degeneracy
condition that guarantees the survival of a QP torus of an integrable system under small perturbations (see \cite{russ,jiang}). In \cite{EFK}, it was shown to be a sufficient condition for KAM stability in the singular perturbation problem that appears in the study of the stability of a QP torus. 

\subsection{On the convergence of the BNF}  \label{sec.div} We know that a convergent symplectic coordinate change that yields the BNF exists if and only if  $H$ is integrable \cite{I} (see also \cite{V,N}). It was known to Poincar\'e that 
for ``typical'' (in a sense we would call today generic) $H$, $f_H$ will be divergent. Siegel 
\cite{S55} proved the same thing
in a neighborhood of an elliptic equilibrium with another, and stronger,
notion of ``typical''. However, this does not solve the question of the convergence of the  BNF itself, that is always defined when $\omega$ is Diophantine. When the radius of convergence of the formal power series $N_H(\cdot)$ is $0$, we say that the BNF diverges. 

For example,  the following questions were asked by Eliasson \cite{E1,eliasson,EFK}:
\begin{itemize}
 \item[($i)$] \bk \bk can $N_H$ be divergent?
 \item[($ii)$] \bk \bk if  $H$ is non integrable, can $N_H$ be convergent?
\end{itemize}
A result of Perez-Marco \cite{PM} states, for any fixed vector $\omega$, that if 
$N_H$ is divergent for some $H$ as in \eqref{HH}, then $N_H$ is divergent for 
``typical'' (i.e. except for a pluri-polar set) $H$. 

In \cite{Fpoint}, it was shown that for any $\om \in \R^{d}, d \geq 4$ such that $\omega_1 \omega_2 <0$
 there exists a real entire Hamiltonian $H: \R^{2d} \to \R$ such that the origin is an elliptic equilibrium with frequency $\om$ and such that the BNF of $H$ at the origin is divergent. 
  This construction can readily be extended to the case of QP tori as in \eqref{HH}. It follows from \cite{PM} that for any Diophantine $\omega \in \R^d$, $d\geq 4$, the BNF at a QP torus of frequency $\omega$ is generically divergent.

{\it A contrario}, one of the results that will be obtained here is an answer to ($ii)$ with an example of a real entire Hamiltonian as in \eqref{HH}, with arbitrary non-resonant frequency $\omega \in \R^3$, such that the BNF at $\cT_0$ exists and is convergent but $\cT_0$ is Lyapunov unstable and thus $H$ is non integrable.

Extending this result to elliptic fixed points is unfortunately not readily available because the action angle coordinates are singular at the origin, and the extension of real analytic unstable constructions in this direction (from tori to points) is a  challenging problem. For instance, it is not known how to adapt the Approximation by Conjugations construction  method on the disc to the real analytic category (see \cite{FKicm} for a discussion on this topic).

\section{Statement of the main results}


\subsection{Lyapunov stability} \bk A closed invariant set of an autonomous Hamiltonian flow  is said to be  Lyapunov stable or 
topologically stable if all nearby orbits remain close to it for all forward time. R. Douady gave in \cite{douady} examples of smooth Hamiltonians having a Lyapunov unstable QP torus. 
 Douady's examples can have any chosen Birkhoff Normal Form at the origin provided its Hessian at the fixed point is non-degenerate. Douady's examples are modelled on the Arnold diffusion mechanism through chains of heteroclinic intersections between lower dimensional partially hyperbolic invariant tori that accumulate towards the origin. The construction consists of a countable number of compactly supported perturbations of a completely integrable flow, and as such was carried out only in the $C^\infty$ category. Examples of smooth Hamiltonians having a Lyapunov unstable QP torus with a degenerate Birkhoff normal form were obtained in \cite{EFK,FSpoint}. 
 
 Topological instability of a QP torus is conjectured to hold for generic systems in $3$ or more degrees of freedom.
In fact, it was conjectured by Arnol'd that a ``general'' Hamiltonian should have a dense orbit on a ``general'' energy surface \cite{arnold}.
A great amount of work has been dedicated to proving
this conjecture (giving a precise meaning to the word ``general''), 
but the picture is not yet completely clear, especially when it comes to real analytic Hamiltonians (see for example \cite{BKZ16} and references therein).  For instance, not a single example was known up to now of a real analytic Hamiltonian that has a   Lyapunov unstable QP torus. It was shown in \cite{Fpoint} that for any $\om \in \R^{d}, d \geq 4$, such that not all its coordinates are of the same sign,  there exists a real entire Hamiltonian  such that the origin is a Lyapunov unstable  elliptic equilibrium with frequency $\om$. As we discussed earlier, the construction of \cite{Fpoint} can readily be extended to the case of QP tori and the condition on the sign of the coordinates of $\om$ can be dropped. 
However, all  the examples that one obtains following the method of \cite{Fpoint} would have a divergent BNF. 
  
The constructions in this work are essentially different and their BNF  will be convergent.
Furthermore,  we can choose the Birkhoff normal form to be either
\begin{align}
 \label{eqN} &\hat{N}(r):=\langle \hat{\omega}(r_d),r \rangle \quad \text{with} \quad \hat{\omega}(s):=(\omega_1+s,\omega_2,\ldots,\omega_d),\; \; \text{or} \\ 
\label{eqN2}
 &\bar N(r):=\langle \bar{\omega}(r_d),r \rangle \quad \text{with} \quad \bar{\omega}(s):=(\omega_1+s,\omega_2+{s^2},\ldots,\omega_{d-1}+s^{d-1},\omega_d). 
\end{align}
For sufficiently  Liouville $\omega$ we will have some constructions with $N_H=N$ where 
\begin{equation} \label{eqNdeg}
 {N}(r):=\langle {\omega},r \rangle. \end{equation}

\begin{Main} \label{theorem.unstable} 
For any $\omega \in \R^d$, $d\geq 3$, there exists a real entire Hamiltonian $H$  
as in \eqref{HH} such that the QP torus   $\cT_0$ is Lyapunov unstable. 

Moreover, the BNF of $H$ at $\cT_0$ can be chosen to be $\hat{N}(\cdot)$ or $\bar N(\cdot)$. 
In the latter case, if $\omega$ is Diophantine, then $\cT_0$ is KAM stable.  
\end{Main}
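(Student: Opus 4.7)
The plan is to realize $H$ as the limit in \eqref{HaHaHaHa}, in scenario (ii) of the introduction. Take $\omega(\cdot)$ to be either $\hat\omega(\cdot)$ or $\bar\omega(\cdot)$---both non-constant analytic curves in $\R^d$ passing through the prescribed vector $\omega$ at $s=0$---and select sequences $\{k_j\}\subset\Z^{d-1}$ and $\{s_j\}\subset\R$ with $s_j\to 0$ so that at each stage the resonance condition $\langle\tilde\omega(s_j),k_j\rangle=0$ is satisfied. For any non-resonant $\omega$ such pairs $(k_j,s_j)$ exist by a Diophantine approximation argument along the analytic one-parameter family $s\mapsto\tilde\omega(s)$; this is precisely the reason for the shift $s\mapsto\hat\omega(s)$ or $\bar\omega(s)$, without which the $r_d$-direction would be decoupled and no resonant slice would appear.

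Each partial sum $H_n$ is integrable and conjugate to $H_0=\langle\omega(r_d),r\rangle$ via an explicit canonical map $\Psi_n$, built by composing the time-one flows of the individual terms $\phi_j\sin(2\pi\langle k_j,\tilde\theta\rangle)$. On the invariant slice $\{r_d=s_j\}$ the $j$-th term becomes a one-degree-of-freedom pendulum in the direction $k_j$, while the transverse angles rotate with a non-resonant frequency; by choosing the peak amplitude $c_j=\phi_j(s_j)$ one produces an orbit of $H_n$ with a drift of order $\delta_j>0$ in the actions conjugate to $\langle k_j,\tilde\theta\rangle$, realised over a time of order $|\log\delta_j|/\sqrt{c_j}$ along the upper separatrix of the pendulum. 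I would build $(k_j,s_j,\phi_j)$ inductively, taking $\phi_j$ to be a real entire bump-type function concentrated in a narrow neighbourhood of $s_j$, with norms $\|\phi_j\sin(2\pi\langle k_j,\cdot\rangle)\|_{\Delta,\rho}\le 2^{-j}$ for all $\Delta,\rho\le j$, so the series converges in every $C^{\omega}_{\Delta,\rho}$ and $H\in C^{\omega}$, while the drifts $\delta_j$ are kept above a uniform $\delta>0$. Passing to the limit gives initial conditions $(\theta^{(j)},r^{(j)})\to\cT_0$ whose $H$-orbits escape to distance at least $\delta$, which is Lyapunov instability.

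Since $H_n=H_0\circ\Psi_n$ with $\Psi_n$ analytic and canonical on a fixed neighbourhood of $\cT_0$, the formal Birkhoff normal form of $H_n$ at $\cT_0$ equals $\langle\omega(r_d),r\rangle$; and as $H_n\to H$ in $C^{\omega}_{\Delta,\rho}$ for every $\Delta,\rho>0$, Taylor coefficients at $r=0$ pass to the limit and $N_H(r)=\langle\omega(r_d),r\rangle$, which equals $\hat N$ or $\bar N$ depending on the choice. For $\omega(\cdot)=\bar\omega$, the form $\bar N$ is non-planar: if $\gamma\in\R^d$ satisfies $\langle\gamma,\nabla\bar N(r)\rangle\equiv 0$ in a neighbourhood of $r=0$, the coefficient of $r_1$ forces $\gamma_d=0$, and the coefficients of the pure powers $r_d,r_d^2,\dots,r_d^{d-1}$ then force $\gamma_1=\dots=\gamma_{d-1}=0$. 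When $\omega$ is moreover Diophantine the BNF is unique, so Theorem~\ref{mB} yields KAM stability of $\cT_0$.

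The hard part will be the inductive compatibility across stages: the perturbation $\phi_{n+1}$ introduced to create the drifting orbit on the slice $r_d=s_{n+1}$ must be small enough not to undo the drifts already realised at stages $j\le n$. Since $\supp\phi_{n+1}$ is concentrated near $r_d=s_{n+1}$ while earlier drifting orbits live on the distinct slices $r_d=s_j$, a quantitative separation-of-supports estimate combined with control of the variation of the actions $r=(\tilde r,r_d)$ under the explicit form of $\Psi_n$ should work; but balancing the analyticity widths $\rho$, the peak amplitudes $c_j$, the support widths, and the sizes of $\|k_j\|$ (which must grow fast to achieve resonance at small $s_j$) is delicate, and is where most of the technical work will go.
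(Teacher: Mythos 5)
Your overall architecture (limits of explicitly integrable $H_n$ as in \eqref{HaHaHaHa}, resonant slices $r_d=s_j$ with $\langle\tilde\omega(s_j),k_j\rangle=0$, openness of finite-time drift under the tail of the series) matches the paper. But two of your key steps do not work as described.

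First, the instability mechanism. The Hamiltonians here are \emph{linear} in $\tilde r$, so $\dot{\tilde\theta}=\tilde\omega(s)$ is independent of $\tilde r$: there is no pendulum, no separatrix, and no $|\log\delta_j|/\sqrt{c_j}$ time scale. What actually happens on the slice $r_d=s_n$ with initial phase $\tilde\theta=0$ is that $\langle k_n,\tilde\theta(t)\rangle\equiv 0$ is frozen by the resonance, so the $n$-th term exerts a \emph{constant} force $2\pi k_n\phi_n(s_n)$ on $\tilde r$, giving a drift linear in $t$ and hence unbounded (the earlier terms contribute only bounded oscillations since $\langle k_j,\tilde\omega(s_n)\rangle\neq 0$ for $j<n$); this is Proposition \ref{prop.diffusion}, case $i)$. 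Your pendulum picture, taken at face value, yields an action excursion of order $\sqrt{c_j}$ along the separatrix, and since the amplitudes $c_j=\phi_j(s_j)$ must tend to $0$ for the series to converge, these excursions cannot be ``kept above a uniform $\delta>0$'' as you claim; as written your mechanism does not produce Lyapunov instability.

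Second, the choice of $\phi_j$ and the identification of the BNF. If $\phi_j$ is an entire ``bump'' concentrated near $s_j\neq 0$, then generically $\phi_j(0)\neq 0$ and $H$ is not even of the form \eqref{HH}: $\cT_0$ is no longer invariant. More importantly, the assertion that ``Taylor coefficients pass to the limit, hence $N_H=\langle\omega(r_d),r\rangle$'' is not a proof: the theorem covers arbitrary, in particular Liouville, $\omega$, for which the BNF need not exist or vary continuously, and the small divisors $\langle\tilde\omega(s),k_j\rangle^{-1}$ in the conjugating generating function are exactly where this could fail. The paper's resolution (Proposition \ref{prop.BNF}) is to take $\phi_j(s)=s^j e^{-j\norm{k_j}}$, so that the formal generating function $S_\infty$ has, at each order $s^p$, a contribution from only the finitely many indices $j\le p$; this makes $S_\infty-\langle\Theta,r\rangle$ a genuine element of $\mathcal C^{\omega}(\T^d_\rho)[[r]]\cap\cO^2(r)$ and, by uniqueness of the normal form at a non-resonant torus, pins down $N_H=\langle\omega(r_d),r\rangle$. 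Your bump functions destroy precisely this finiteness. (Your Rüssmann non-degeneracy check for $\bar N$ and the appeal to Theorem \ref{mB} are fine, and your worry about later stages undoing earlier drifts is handled in the paper simply by the $C^3$-openness of the finite-time properties, Proposition \ref{prop.approximation}.)
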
  

While constructing these examples of Lyapunov unstable QP tori, we 
clarify several questions regarding the stability of QP motion in the analytic context. Namely, 

\begin{itemize}

 \item[$i)$] \bk Lyapunov instability of $\cT_0$ can be obtained for arbitrary frequencies $\omega \in \R^d$.

 \item[$ii)$] \bk The examples of Theorem \ref{theorem.unstable} have a convergent BNF, thus answering positively the question of Eliasson  mentioned in Section \ref{theorem.unstable} ($ii)$
(see \cite{E1,eliasson,EFK}). 
The same question in the case of elliptic fixed points is still open (see \cite{Fpoint,FKicm} for a discussion of this problem). 

\item[$iii)$] \bk The BNF can be chosen to be a very simple polynomial as in \eqref{eqN}.
This shows that  R\"{u}ssmann's local integrability result for Diophantine QP tori \cite{russmann}, 
that holds true when  the BNF is completely degenerate (equal to a function of $\langle \omega,r\rangle$), does not hold for a simple highly degenerate form as $\hat{N}$. 

\item[$iv)$] \bk The Birkhoff normal form $\bar N$ is non-degenerate in the sense of R\"ussmann. Hence, Theorem \ref{mB} proves in this case the  coexistence of diffusion and KAM stability.

\end{itemize}

\medskip

\begin{remark} Note that Herman conjectured that for Diophantine frequencies $\cT_0$ is accumulated by a positive measure of QP tori in the analytic category (see Section \ref{sec.KAM} below). If the conjecture is true then even the examples with BNF $\hat{N}$ should also have coexistence of Lyapunov instability and $\cT_0$ being accumulated by a positive measure of QP tori. 
\end{remark} 

 \subsection{Effective stability} 

An important question in classical mechanics is to estimate the escape rate of orbits starting in small neighborhoods of invariant objects such as fixed points or invariant tori. In our context we introduce, for a given $H$ as in \eqref{HH} and $\cT_0$,
\begin{equation}
\label{eq_def_TR}
 T(r):=\inf_{\theta\in \T^d, \abs{r'}\leq r}\left\{t>0 \; | \; \text{dist}(\Phi^t_{H}(\theta, r'), \cT_0)= \frac{1}{r}\right\}.
\end{equation}
If $T(r)$\footnote{We apologize for the double use of the notation $r$ as a scalar in definition \eqref{eq_def_TR} and previously as a variable in $\R^d$.} exists for all $r>0$ sufficiently small then we say that $\cT_0$ is {\it diffusive}.
Based on the Diophantine exponent $\tau$, exponential lower bounds for $T(r)$ can be derived from  estimates on the remainder terms in the BNF reductions. 
It follows from \cite{jorba,popov,wiggins} that for $H$ as in \eqref{HH}, $\omega \in \Omega^d_{\gamma,\tau}$, there exist
positive constants $C, R$ such that for $r<R$

 \begin{equation}\label{bound.DC}
  T(r)\geq r^{-1}exp (Cr^{-(\tau+1)^{-1}}).
 \end{equation}
One aim of this paper is to prove the optimality of the exponent in this bound for a certain class of Diophantine frequencies (see Corollary \ref{cor_optimal}).  Many results on optimal Arnold diffusion times  in smooth, Gevrey and analytic context exist in the literature, that sometimes relate the speed of diffusion to arithmetic conditions. We refer to  \cite{jeanpierre,LM05,Zh11,KLS14,FMS17} and references therein. 

\begin{remark}
 The usual definition of $T$ in \eqref{eq_def_TR} requires diffusion up to distance  $2r$ instead of $r^{-1}$, and this is the original context in which \eqref{bound.DC} was proved. In our case, the definition of diffusiveness with $r^{-1}$ instead of $2r$ does not change  the order of magnitude of the diffusion time and has the advantage of implying  Lyapunov instability of a diffusive torus $\cT_0$.
\end{remark}

Our main result on diffusion time is stated for vectors  
$\omega=(\tilde{\omega}, \omega_d)$, where $d\geq 3$ and $\tilde{\omega} \in \R^{d-1}$ does not belong to some Diophantine class.

\begin{Main}
\label{thm_diff_general}
 For any $\tau>0$, $C>0$ and any $\omega = (\tilde{\omega}, \omega_d)\in \R^d$, where $d\geq 3$ with $\tilde{\omega} \notin \Omega_{\tau}^{d-1}$,
there is a real analytic Hamiltonian $H$ as in
\eqref{HH} such that $\mathcal{T}_0$ is  diffusive and 
$T(r_n)\leq \exp (C r_n^{-(\tau+1)^{-1}})$ for a sequence $r_n\to 0$. Moreover, the BNF at $\cT_0$ is given by $\hat{N}(\cdot)$.
\end{Main}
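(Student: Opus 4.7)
The plan is to instantiate the construction of \eqref{HaHaHaHa} in the resonance-crossing regime (scenario (ii) of the introduction): take $\omega(s):=\hat{\omega}(s)=(\omega_1+s,\omega_2,\dots,\omega_d)$ so that, as $r_d$ shrinks towards $0$, the vector $\tilde{\omega}(r_d)=(\omega_1+r_d,\omega_2,\ldots,\omega_{d-1})$ crosses a carefully chosen sequence of exact resonances with integer vectors $k_j\in\Z^{d-1}$. Since $\tilde{\omega}\notin\Omega^{d-1}_{\tau}$, one extracts $k_j$ with $\norm{k_j}\to\infty$, $k_{j,1}\neq 0$ and $\abs{\langle k_j,\tilde{\omega}\rangle}\le\gamma_j\norm{k_j}^{-\tau}$ with $\gamma_j\to 0$ as rapidly as needed. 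The exact resonance $\langle k_j,\tilde{\omega}(r_d^{(j)})\rangle=0$ then occurs at $r_d^{(j)}:=-\langle k_j,\tilde{\omega}\rangle/k_{j,1}$ with $\abs{r_d^{(j)}}\lesssim\norm{k_j}^{-\tau-1}$, so the sequence $r_n:=\abs{r_d^{(j_n)}}$ along which we shall detect fast diffusion satisfies $r_n^{-1/(\tau+1)}\asymp\norm{k_{j_n}}$ -- precisely the scale in the target bound.

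The profiles $\phi_j(r_d)$ are chosen positive, real analytic, and designed to satisfy: \textbf{(a)} $\epsilon_j:=\phi_j(r_d^{(j)})\asymp\exp(-c_0\norm{k_j})$; \textbf{(b)} $\phi_j(r_d^{(i)})\ll\epsilon_i$ for $i\neq j$; \textbf{(c)} $\phi_j$ vanishes at $r_d=0$ to order much larger than $j$. With $\rho<c_0/(2\pi)$, the sup-norm bound $\norm{\sin(2\pi\langle k_j,\tilde{\theta}\rangle)}_{\T^d_\rho}\le e^{2\pi\norm{k_j}\rho}$ and \textbf{(a)} force convergence of the series in $C^{\omega}_{\Delta,\rho}$ for any $\Delta>0$. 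Since each truncation $H_n$ is canonically conjugate to $H_0=\langle\hat{\omega}(r_d),r\rangle$ through the explicit $\Psi_n$ of Section~\ref{sec.bnf}, condition \textbf{(c)} implies that the formal Taylor expansion of $H$ at $\cT_0$ stabilizes term-by-term and coincides with $\hat{N}$ at every order; in particular $N_H=\hat{N}$.

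For the diffusion estimate one exploits two structural features: $H$ is independent of $\theta_d$, so $r_d$ is conserved; and at $r_d=r_d^{(j)}$ the $\tilde{\theta}$-dynamics is the rigid rotation $\tilde{\theta}(t)=\tilde{\theta}(0)+\tilde{\omega}(r_d^{(j)})t$, for which $\langle k_j,\tilde{\theta}(t)\rangle$ is frozen. Take the initial point $(\theta^*,0,\ldots,0,r_d^{(j)})$ -- at distance exactly $r_n$ from $\cT_0$ -- with $\theta^*$ chosen so that $\cos(2\pi\langle k_j,\theta^*\rangle)=1$. Hamilton's equations then yield
\[
\dot{\tilde{r}}(t)=2\pi\epsilon_j k_j+2\pi\sum_{i\neq j}\phi_i(r_d^{(j)})\cos\bigl(2\pi\langle k_i,\tilde{\theta}(t)\rangle\bigr)k_i,
\]
and condition \textbf{(b)} combined with the oscillatory bound $\bigl|\int_0^t\cos(2\pi\langle k_i,\tilde{\theta}(s)\rangle)ds\bigr|\lesssim 1/\abs{\langle k_i,\tilde{\omega}(r_d^{(j)})\rangle}$ forces the secondary sum to contribute only a bounded remainder. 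Thus $\norm{\tilde{r}(t)}\geq \pi\epsilon_j\norm{k_j}t$ up to the ballistic window, and the time needed for $\norm{\tilde{r}}\geq 1/r_n$ obeys
\[
t_n\lesssim\frac{1}{\epsilon_j\norm{k_j}r_n}\asymp\norm{k_j}^{\tau}\exp(c_0\norm{k_j})\le\exp(C\norm{k_j})\asymp\exp\bigl(Cr_n^{-1/(\tau+1)}\bigr),
\]
once $c_0\in(2\pi\rho,C)$, which is achieved by taking the strip width $\rho$ sufficiently small. Diffusiveness of $\cT_0$ at arbitrary radii follows by using, for $r\in(r_{n+1},r_n]$, the same initial condition $(\theta^*,0,\ldots,0,r_d^{(j_n)})$, which already lies in $\{\abs{r'}\le r\}$.

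The principal obstacle is the simultaneous realization of \textbf{(a)}--\textbf{(c)} by a single analytic family on a fixed complex strip: there is no analytic analogue of a compactly supported bump, so one must resort to analytic peak profiles -- for instance rational functions of $r_d$ with poles placed off $\R$ at distance exceeding $\rho$, multiplied by a high power of $r_d$ -- whose orders of vanishing at $0$ and whose pole locations must be carefully calibrated against $\norm{k_j}$ and against the spacing of the $\{r_d^{(j)}\}$. The accompanying quantitative task, of verifying that all cross-interaction terms in the drift equation above stay dominated by the leading ballistic contribution up to time $t_n$, reduces to controlling the small divisors $\langle k_i,\tilde{\omega}(r_d^{(j)})\rangle$ for $i\neq j$, and is the technical heart of the argument.
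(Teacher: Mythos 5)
Your overall strategy coincides with the paper's (Theorem \ref{th2}): take $\omega(\cdot)=\hat{\omega}(\cdot)$, use $\tilde{\omega}\notin\Omega^{d-1}_{\tau}$ to produce near-resonances $k_j$ whose exact resonance $\langle k_j,\tilde{\omega}(s_j)\rangle=0$ occurs at $s_j=-\langle k_j,\tilde{\omega}\rangle/k_{j,1}$ with $\norm{k_j}<\abs{s_j}^{-(\tau+1)^{-1}}$ (Lemma \ref{lemma_sequences} and \eqref{eq_dir2}), start on the exact resonance so that the $k_j$-mode drifts ballistically, and bound the remaining modes by oscillatory integrals. But there is a genuine gap: you never construct the $\phi_j$, and you explicitly defer what you call the ``principal obstacle'' --- realizing (a)--(c) by analytic profiles --- to an unspecified construction with rational peak functions. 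Moreover, the desiderata you impose are what create this obstacle. Condition (b) (localization of $\phi_j$ near $r_d^{(j)}$) is not needed and is in fact violated by the paper's choice $\phi_j(s)=s^je^{-\frac{C}{2}\norm{k_j}}$: for $i>j$ one has $\phi_j(s_i)=\abs{s_i}^{-(i-j)}e^{\frac{C}{2}(\norm{k_i}-\norm{k_j})}\phi_i(s_i)\gg\phi_i(s_i)$. What controls the non-resonant terms at $s=s_n$ is not the smallness of their amplitudes relative to the resonant one, but the smallness of the ratio amplitude over small divisor, $\sum_{j<n}\abs{s_n}^j\norm{k_j}e^{-\frac{C}{2}\norm{k_j}}\abs{\langle k_j,\tilde{\omega}(s_n)\rangle}^{-1}$, which is bounded once one arranges $\abs{\langle k_j,\tilde{\omega}(s_n)\rangle}=\abs{s_j-s_n}\abs{k_{j,1}}>\abs{s_n}$ by letting $s_j$ decrease fast. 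Condition (a) is likewise unnecessary: the resonant amplitude at $s_n$ is $\abs{s_n}^ne^{-\frac{C}{2}\norm{k_n}}$, far smaller than $e^{-c_0\norm{k_n}}$, but the allowed time $\exp(C\abs{s_n}^{-(\tau+1)^{-1}})\geq\exp(C\norm{k_n})$ leaves an exponential surplus $\exp(\tfrac{C}{2}\abs{s_n}^{-(\tau+1)^{-1}})$ that absorbs every power of $s_n$. So no analytic bump is needed; a monomial in $s$ times a constant amplitude suffices, and this is exactly what makes the construction possible in the analytic category.

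Two smaller points. First, your BNF argument conflates the Taylor expansion of $H$ at $\cT_0$ with its normal form: the $\theta$-dependent terms do not disappear from the jet of $H$, so ``the Taylor expansion coincides with $\hat{N}$ at every order'' is false as stated. What is true (Proposition \ref{prop.BNF}) is that the generating function $S_\infty=\langle\Theta,r\rangle-\frac{1}{2\pi}\sum_j\langle\tilde{\omega}(s),k_j\rangle^{-1}\phi_j(s)\cos(2\pi\langle k_j,\tilde{\Theta}\rangle)$ defines a genuine formal power series in $r$ with trigonometric-polynomial coefficients, precisely because $\langle\tilde{\omega}(s),k_j\rangle^{-1}\phi_j(s)=\cO(s^j)$ forces only the indices $j\leq p$ to contribute to the coefficient of $s^p$; it is this formal conjugacy that sends $H$ to $\hat{N}$. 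Second, you do not address the passage from the truncations $H_n$ to the limit $H$: the paper does this through Proposition \ref{prop.approximation} (openness of the finite-time diffusion property in the $C^3$ topology together with $\norm{H-H_n}_{C^3}\to0$ as $k_{n+1}\to\infty$), which also dispenses with any need to control the infinite tail of cross-interaction terms in your drift equation.
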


The BNF of the Hamiltonians that we construct in 
Theorem \ref{thm_diff_general} must be very special. 
Indeed, it was proven in \cite{MG,BFN} that a QP  torus with Diophantine
frequency is generically and prevalently doubly exponentially stable. 
More precisely, it was shown  that a point that starts at distance $r$ from   the torus remains within distance $2r$ close to it
for an interval of time which is larger than $\exp(\exp(C {r}^{-(\tau+1)^{-1}})).$
The proof of double exponential stability is based on a combination of the estimates on the BNF and Nekhoroshev stability theory.  
It is worth mentioning that analogous results have been proved in the context of elliptic fixed points as well, both for exponential and double exponential stability (see  \cite{DF}, \cite{BFN2}).
To show how Theorem \ref{thm_diff_general} allows to approach the known lower bound on the diffusion speed  $T(r)$ for {\it some} Diophantine vectors we will need the following simple arithmetic lemma. 

\begin{lemma}
\label{lemma_diophantine}
 For any $\tau>d-1$ and $\tilde{\omega}\in \Omega^{d-1}_{\tau}$, $a.e$ $\omega_d\in \R$ 
 satisfies $\omega:=(\tilde{\omega}, \omega_d) \in \Omega^{d}_{\tau}$. 
\end{lemma}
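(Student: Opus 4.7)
The plan is to run a standard Borel--Cantelli-style measure estimate on the set of bad $\omega_d$, exploiting the fact that $\tilde\omega \in \Omega^{d-1}_{\gamma_0,\tau}$ already handles those $k \in \Z^d\setminus\{0\}$ with vanishing last component. Indeed, writing $k=(\tilde k,k_d)$, if $k_d=0$ then $\tilde k\ne 0$ and
$$|\langle \omega,k\rangle|=|\langle \tilde\omega,\tilde k\rangle|\ge \frac{\gamma_0}{\|\tilde k\|^{\tau}}=\frac{\gamma_0}{\|k\|^{\tau}}$$
independently of $\omega_d$. So only the vectors with $k_d\ne 0$ need to be controlled by the choice of $\omega_d$.

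Fix a bounded interval $I\subset\R$ and, for $\gamma>0$, let $B_\gamma\subset I$ be the set of $\omega_d\in I$ for which there exists $k\in\Z^d\setminus\{0\}$ with $k_d\ne 0$ and
$$|\langle \tilde\omega,\tilde k\rangle+\omega_d k_d|<\frac{\gamma}{\|k\|^{\tau}}.$$
For each such $k$, the corresponding bad set of $\omega_d$ is an interval of length at most $2\gamma/(|k_d|\|k\|^{\tau})$. Summing, I get
$$\mathrm{meas}(B_\gamma)\le 2\gamma \sum_{k\in\Z^d,\,k_d\ne 0}\frac{1}{|k_d|\,\|k\|^{\tau}}.$$

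The one step that uses the arithmetic hypothesis is showing the above series converges when $\tau>d-1$. I would split the sum according to whether $\|\tilde k\|\le |k_d|$ (so $\|k\|=|k_d|$ and the number of choices of $\tilde k$ is $\lesssim |k_d|^{d-1}$) or $\|\tilde k\|>|k_d|$ (so $\|k\|=\|\tilde k\|$, and for each level $\|\tilde k\|=n$ there are $\lesssim n^{d-2}$ choices and $\sum_{0<|k_d|\le n}|k_d|^{-1}\lesssim \log n$). The two resulting tails are bounded by $\sum_{n\ge 1} n^{d-2-\tau}$ and $\sum_{n\ge 1} n^{d-2-\tau}\log n$ respectively, both convergent precisely because $\tau>d-1$.

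This yields $\mathrm{meas}(B_\gamma)\le C(\tau,d,\tilde\omega)\,\gamma$. The set of $\omega_d\in I$ for which $(\tilde\omega,\omega_d)\notin \Omega^d_{\tau}=\bigcup_{\gamma>0}\Omega^d_{\gamma,\tau}$ is exactly the decreasing intersection $\bigcap_{\gamma>0}B_\gamma$, hence of measure zero by continuity of measure. Taking $I=[-R,R]$ and letting $R\to\infty$ gives the claim. The only delicate point in this plan is the combinatorial convergence of the series, which is where the hypothesis $\tau>d-1$ enters; the rest of the argument is standard.
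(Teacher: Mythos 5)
Your proposal is correct and follows essentially the same route as the paper: fix a bounded interval, reduce to vectors with $k_d\neq 0$ using the Diophantine property of $\tilde\omega$, bound each bad set of $\omega_d$ by an interval of length $2\gamma/(|k_d|\,\|k\|^{\tau})$, and split the sum according to whether $\|\tilde k\|\le |k_d|$ or not, obtaining a total measure $\mathcal O(\gamma)$ that vanishes as $\gamma\to 0$. The only cosmetic quibble is that the bad set is \emph{contained in} (rather than equal to) $\bigcap_{\gamma\le\gamma_0}B_\gamma$, which is all the argument needs.
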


Hence, if we pick $\tilde{\omega} \in \Omega^{d-1}_{\tau} \setminus \Omega^{d-1}_{\tau-\varepsilon}$, it is possible to ``extend'' it into $\omega=(\tilde{\omega}, \omega_d)$ for some $\omega_d$ such that ${\omega} \in \Omega^{d}_{\tau}$.

\begin{Coro}
\label{cor_optimal}
For any $\tau>d-1$, $\varepsilon>0$ there is a real analytic Hamiltonian $H$ as in
\eqref{HH} with $\omega \in \Omega^{d}_{\tau}$
such that $\mathcal{T}_0$ is  diffusive and $T(r_n)\leq \exp (C r_n^{-(\tau+1-\varepsilon)^{-1}})$ for a sequence $r_n\to 0$. 
\end{Coro}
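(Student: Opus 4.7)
The plan is to combine Theorem~\ref{thm_diff_general} with Lemma~\ref{lemma_diophantine} exactly as suggested by the remark preceding the corollary. The three steps are: first, exhibit a vector $\tilde\omega \in \Omega^{d-1}_\tau \setminus \Omega^{d-1}_{\tau-\varepsilon}$; second, extend it via Lemma~\ref{lemma_diophantine} to a vector $\omega=(\tilde\omega,\omega_d)\in \Omega^d_\tau$; third, apply Theorem~\ref{thm_diff_general} to $\omega$ with the theorem's free Diophantine exponent set to $\tau-\varepsilon$, which is permissible precisely because $\tilde\omega \notin \Omega^{d-1}_{\tau-\varepsilon}$.

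For the first step, two cases arise. If $\tau-\varepsilon \leq d-1$, the set $\Omega^{d-1}_{\tau-\varepsilon}$ has Lebesgue measure zero whereas $\Omega^{d-1}_\tau$ has full measure, so a generic choice of $\tilde\omega$ already works. If $\tau-\varepsilon > d-1$, both sets have full measure and an explicit element of their difference must be produced. One invokes a classical fact from metric number theory: for every $\tau>d-1$ there exist vectors in $\R^{d-1}$ whose simultaneous Diophantine exponent is exactly $\tau$, that is, elements of $\Omega^{d-1}_\tau \setminus \bigcup_{\tau'<\tau}\Omega^{d-1}_{\tau'}$. Such a vector can be built explicitly, for instance by inductively prescribing integer vectors $k^{(n)}$ so that $|\langle \tilde\omega,k^{(n)}\rangle|$ is of order $\|k^{(n)}\|^{-\tau}$ (to force failure of the Diophantine condition at exponent $\tau-\varepsilon$), combined with a Baire or measure-theoretic argument to prevent faster approximations.

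For the second step, Lemma~\ref{lemma_diophantine} applied to the $\tilde\omega$ from step one yields a full-measure set of $\omega_d\in \R$ with $\omega:=(\tilde\omega,\omega_d)\in \Omega^d_\tau$; fix any such $\omega_d$. For the third step, since $\tilde\omega\notin \Omega^{d-1}_{\tau-\varepsilon}$, Theorem~\ref{thm_diff_general} applies to $\omega$ with its parameter $\tau$ replaced by $\tau-\varepsilon$ and with the desired constant $C$; this provides a real analytic Hamiltonian $H$ of the form \eqref{HH} with frequency $\omega\in \Omega^d_\tau$, a diffusive torus $\mathcal{T}_0$, and a sequence $r_n\to 0$ satisfying
$$T(r_n) \leq \exp\!\bigl(C\,r_n^{-(\tau-\varepsilon+1)^{-1}}\bigr)=\exp\!\bigl(C\,r_n^{-(\tau+1-\varepsilon)^{-1}}\bigr),$$
which is the asserted bound.

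The only non-routine ingredient is the arithmetic construction in the first step when $\tau-\varepsilon>d-1$, and this is classical and not a serious obstacle. The corollary is thus, in effect, an immediate consequence of Theorem~\ref{thm_diff_general} and Lemma~\ref{lemma_diophantine}.
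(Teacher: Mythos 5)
Your proposal is correct and follows essentially the same route as the paper: pick $\tilde\omega \in \Omega^{d-1}_{\tau}\setminus \Omega^{d-1}_{\tau-\varepsilon}$, extend it to $\omega=(\tilde\omega,\omega_d)\in\Omega^{d}_{\tau}$ via Lemma~\ref{lemma_diophantine}, and apply Theorem~\ref{thm_diff_general} with exponent $\tau-\varepsilon$; the paper treats the existence of such a $\tilde\omega$ as implicit, whereas you spell it out. One small correction to your step one: since the vectors live in $\R^{d-1}$, the set $\Omega^{d-1}_{\tau-\varepsilon}$ has measure zero (indeed is empty or exceptional) only when $\tau-\varepsilon\leq d-2$, not $d-1$, but this does not harm the argument because your second case (vectors of prescribed exact Diophantine exponent, which exist classically) covers the remaining range.
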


Then due to Lemma \ref{lemma_diophantine} the proof of Corollary \ref{cor_optimal} becomes a direct application of Theorem \ref{thm_diff_general}.
The proof of  Lemma \ref{lemma_diophantine} is elementary.  We sketch it for completeness. 

\begin{proofof}{Lemma \ref{lemma_diophantine}}
  Let $I$ be an arbitrary bounded interval in $\R$.  
 We denote by $D_{\omega,\tau, \gamma}$ the set of  $\omega_d \in I$ satisfying 
 $(\tilde{\omega},\omega_d) \in \Omega^{d}_{\tau, \gamma}$. For any $k=(k_1, \ldots, k_d)\in \Z^{d}$, $k\neq 0$, consider the set
 
 \begin{equation*}
  A^{\omega}_{\tau, \gamma,k}=\left\{ \omega_d\in I \; | \; \abs{\langle k,\omega\rangle }<\frac{\gamma}{\norm{k}^{\tau}}\right\}.
 \end{equation*}
Since $\tilde{\omega}\in \Omega^{d-1}_{\tau}$, we have that 
$$I\setminus D_{\omega, \tau, \gamma}\subset \bigcup_{k \in \Z^d, k_d \neq 0}A^{\omega}_{\tau,\gamma, k}.$$
 Hence, for some constant $C_d>0$ we get that 
 \begin{align*}
 \mu(I\setminus D_{\omega, \tau, \gamma})&\leq \sum_{\tilde{k} \in \Z^{d-1}} \sum_{0<|k_d|< \norm{\tilde k}}\mu({A^{\omega}_{\tau,\gamma, k}})+  \sum_{|k_d|>0} \sum_{ \norm{\tk}\leq |k_d|} \mu({A^{\omega}_{\tau,\gamma, k}})\\
&\leq 2 \gamma \sum_{\tilde{k} \in \Z^{d-1}}\sum_{0<|k_d|< \norm{\tk}}  \cfrac{1}{|k_d|\norm{\tk}^{\tau}}+2\gamma      
  \sum_{|k_d|>0} \sum_{ \norm{\tk}\leq |k_d|}
 \cfrac{1}{|k_d|^{\tau+1}}\\
&\leq C_d \gamma \sum_{ \tilde{k} \in \Z^{d-1} \setminus \{0\}}  \cfrac{\ln \norm{\tk}}{\norm{\tk}^{\tau}}+ {C}_d\gamma\sum_{|k_d|>0} \cfrac{|k_d|^{d-1}}{|k_d|^{\tau+1}}=\mathcal O(\gamma).
 \end{align*}
 Therefore $\mu\left(I \setminus \bigcup_{\gamma>0}{D_{\tau,\gamma, \omega}}\right)=0$.
\end{proofof} \bk

\noindent {\bf \bg Liouville frequencies.} For elliptic fixed points with non resonant frequencies of smooth Hamiltonians,
the existence of the BNF up to arbitrary order implies that the diffusion time from small $r$-neighborhoods of the origin cannot be faster than arbitrarily high powers in $r^{-1}$. For sufficiently Liouville frequencies, finite order BNFs may be not well defined at an invariant torus, even for real analytic Hamiltonians. In this case, diffusion time may be much faster than in the case of elliptic equilibria. We will work with non resonant frequencies $\omega=(\tilde{\omega}, \omega^d) \in \R^d$, $d\geq 3$ where  $\tilde{\omega}\in \R^{d-1}$ is such that 
there is a sequence $\{\bar k_j\} \subset \Z^{d-1}$ satisfying
\begin{equation}
\label{eq_liouville_cond}
\lim_{j\to \infty} \frac{\ln \abs{\langle \tilde{\omega}, \bar k_j \rangle}}{\norm{\bar k_j}}=-\infty.
\end{equation}

 \begin{Main}
 \label{thm_fast}
 For any $\omega \in \R^d$ satisfying \eqref{eq_liouville_cond}:

\medskip 

{ a)}  There exists a real entire Hamiltonian $H$ as in $\eqref{HH}$ with the BNF of $H$ at $\cT_0$ given by $N(\cdot)=\langle \omega,\cdot\rangle$, and such that  $\mathcal{T}_0$ is  diffusive with 
$T(r)\leq {r_n^{-n}}$ for a sequence $r_n \to 0$. 

\medskip 
 
{ b)}  There exists a real entire Hamiltonian $H$ as in $\eqref{HH}$ such that $\mathcal{T}_0$ is  diffusive and 
$T(r_n)\leq {r_n^{-4}}$ for a sequence $r_n \to 0$.

\begin{remark}
 Notice that the difference between the two results in Theorem \ref{thm_fast} is that having a faster diffusion in $b)$ comes with the price of not having a well defined BNF as we do have in $a)$. When, in Theorem \ref{th03}, we will state the explicit constructions for both $a)$ and $b)$, we will explain in Remark \ref{remark.explain.bnf} the reason behind the slowing down of the diffusion in $a)$.
\end{remark}

\begin{remark} It is easy to see from our proof that if we just ask to diffuse from an initial condition $\norm{z_n}=r_n$ to $n$ and not $r_n^{-1}$, 
then it is possible to replace the upper bound ${r_n^{-4}}$ of case ${ b)}$ by ${r_n^{-2-\epsilon}}$, with $\epsilon>0$ arbitrarily small. Moreover,
if we assume stronger Liouville conditions on $\tilde{\omega}$ we can even get diffusion times that are even closer to  ${r_n^{-2}}$, which is clearly a lower bound for diffusion times for $H$ as in \eqref{HH}.
\end{remark} 

\end{Main}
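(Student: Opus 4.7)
The plan is to realize both Hamiltonians as AbC-limits of the form \eqref{HaHaHaHa} with the frequency vector held \emph{constant}, $\omega(s)\equiv\omega$, using the Liouville witnesses $k_j:=\bar k_j$ from \eqref{eq_liouville_cond}, and with $\phi_j(s)=a_j s^{m_j}$ for parameters $(a_j,m_j)$ to be tuned differently in the two cases. Since each $H_n$ is independent of $\theta_d$ and the frequency does not depend on $r$, the Hamiltonian flow of $H_n$ integrates explicitly: $s=r_d$ is frozen, $\tilde\theta(t)=\tilde\theta_0+t\tilde\omega$, and, writing $\epsilon_j:=\langle k_j,\tilde\omega\rangle$, the action drift is
\begin{equation*}
\tilde r(t)-\tilde r(0)=\sum_{j=2}^{n} k_j\,a_j s^{m_j}\,\frac{\sin(2\pi\langle k_j,\tilde\theta_0\rangle+2\pi t\epsilon_j)-\sin(2\pi\langle k_j,\tilde\theta_0\rangle)}{\epsilon_j}.
\end{equation*}
Thus at time $t_n\sim 1/\epsilon_n$, after choosing $\tilde\theta_0$ so that the $n$-th sine sweeps from $-1$ to $1$, the $n$-th term alone produces a displacement of order $\|k_n\|\,a_n s^{m_n}/\epsilon_n$. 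The remaining steps are (i) turn this into the required diffusion time, and (ii) check that the full $H$ (not only the truncation $H_n$) achieves essentially the same drift. Both reduce to making the tail of the series extremely small compared to $\epsilon_n$ over the time window $t_n$.

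For \textbf{part (b)}, take $m_j=2$ for all $j$ and $a_j\sim \epsilon_j^{1/4}/\|k_j\|$. With $r_n:=s_n:=\epsilon_n^{1/4}$, the estimate above gives drift $\gtrsim 1/r_n$ in time $t_n\leq r_n^{-4}$, yielding the desired bound $T(r_n)\le r_n^{-4}$. Analyticity/entireness follows because $\|\phi_j\sin(2\pi\langle k_j,\tilde\theta\rangle)\|_{\Delta,\rho}\le a_j\Delta^2 e^{2\pi\|k_j\|\rho}$, and $a_j\le e^{-c_j\|k_j\|/4}$ with $c_j\to\infty$ makes the sum converge on every $D_{\Delta,\rho}$. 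The same super-exponential decay controls the tail: $\sum_{j>n}\|k_j\|a_j s^{m_j}/\epsilon_n\ll 1/r_n$ on $[0,t_n]$, so the flow of $H$ tracks that of $H_n$ to high precision. For \textbf{part (a)} the choice is the same up to letting $m_j\to\infty$, e.g.\ $m_j=\lfloor j/2\rfloor+2$ and $a_j\sim\epsilon_j^{1/2}/\|k_j\|$; taking $r_n=s_n=\epsilon_n^{1/n}$ yields $t_n=1/\epsilon_n\le r_n^{-n}$ with drift still $\gtrsim 1/r_n$, proving the diffusion statement.

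The \textbf{BNF claim in (a)} is the extra content. As noted after \eqref{HaHaHaHa}, each truncation admits an explicit symplectic conjugacy $H_n=H_0\circ\Psi_n$, so the formal Birkhoff normal form of $H_n$ at $\cT_0$ is $H_0=N$. Moreover, on the formal side, $N$ is directly removable by the generating series $f=-\sum_j \frac{\phi_j(s)}{4\pi\epsilon_j}\cos(2\pi\langle k_j,\tilde\theta\rangle)$: because $k_j$ has zero last component, the would-be higher-order obstructions in the formal equation $H(\theta,r+\partial_\theta f)=N(r)$ are proportional to $(k_j)_d=0$ and vanish. What turns this into a bona fide element of $\mathcal C^\omega(\T^{d-1}_\rho)[[r]]$ is precisely $m_j\to\infty$: at each order $s^k$ only the finitely many $j$ with $m_j\le k$ contribute. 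Consequently every truncation of the formal BNF of $H$ coincides with that of $H_n$ for $n$ large enough, and hence equals $N$. The requirement $m_j\to\infty$ is also the technical obstacle to faster diffusion in (a): it forces $\phi_j(s)=a_j s^{m_j}$ to vanish to increasing order at the torus, which is the trade-off explained in Remark \ref{remark.explain.bnf}. The main delicate point of the whole argument is bookkeeping the tail: one must ensure that the contributions of the terms $j\ne n$ during the window $[0,t_n]$ cannot undo the drift produced by the $n$-th term, which is guaranteed by the super-exponential decay $a_j\le e^{-c_j\|k_j\|/2}$ inherited from $c_j\to\infty$.
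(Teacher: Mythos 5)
Your overall strategy is the paper's: the paper proves Theorem~\ref{thm_fast} via Theorem~\ref{th03}, taking $\omega(\cdot)\equiv\omega$, $k_j$ a fast-growing subsequence of the $\bar k_j$ from \eqref{eq_liouville_cond}, $\phi_j(s)=s^2e^{-j\norm{k_j}}$ for part $b)$ and $\phi_j(s)=s^je^{-j\norm{k_j}}$ for part $a)$, with the drift computed by the same explicit integration of $\dot{\tilde r}$ and the BNF controlled exactly as you do, by the fact that only finitely many $j$ contribute to each power of $s$ in the generating series. Your part $b)$ checks out: with $a_j\sim\epsilon_j^{1/4}/\norm{k_j}$ and $s_n=\epsilon_n^{1/4}$ the $n$-th term contributes $2\norm{k_n}a_ns_n^2/\epsilon_n=2\epsilon_n^{-1/4}=2/r_n$ within time $1/(2\epsilon_n)\le r_n^{-4}$, and the body and tail are controlled by fast growth of the subsequence. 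Your BNF discussion for $a)$ is also correct and matches Proposition~\ref{prop.BNF}.

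However, the diffusion estimate in your part $a)$ fails as written. With $a_n\sim\epsilon_n^{1/2}/\norm{k_n}$, $m_n=\lfloor n/2\rfloor+2$ and $s_n=\epsilon_n^{1/n}$, the maximal displacement the $n$-th term can ever produce (over a full half-period) is
\begin{equation*}
2\,\norm{k_n}\,a_n\,s_n^{m_n}/\epsilon_n \;=\;2\,\epsilon_n^{\,m_n/n-1/2}\;\approx\;2\,\epsilon_n^{\,2/n}\;=\;2\,r_n^{2}\;\longrightarrow\;0,
\end{equation*}
which is not $\gtrsim 1/r_n=\epsilon_n^{-1/n}$; the orbit essentially does not move. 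Worse, no choice of $s_n$ rescues these $(a_j,m_j)$: reaching displacement $1/s_n$ forces $s_n\gtrsim\epsilon_n^{1/(2m_n+2)}$, while keeping the time $\sim 1/\epsilon_n$ below $s_n^{-n}$ forces $s_n\le\epsilon_n^{1/n}$, and since $2m_n+2>n$ these two conditions are incompatible. The fix requires decoupling the amplitude from $\epsilon_j$ and not waiting a full period: the paper takes $\phi_j(s)=s^je^{-j\norm{k_j}}$, uses the freedom in \eqref{eq_liouville_cond} to pass to a subsequence with $\abs{\langle\tilde\omega,k_n\rangle}\le e^{-n^4\norm{k_n}}$, sets $s_n=e^{-n^2\norm{k_n}}$ and stops at time $t=s_n^{-2n}\ll 1/\epsilon_n$, where the sine is still in its linear regime so the drift is $\approx\norm{k_n}\phi_n(s_n)\,t\gg s_n^{-1}$. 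In other words, the extra room needed to pay for $s^{m_n}$ with $m_n\to\infty$ comes from strengthening the Liouville smallness of $\epsilon_n$ along the subsequence, not from the period length; your ansatz spends all of $1/\epsilon_n$ on the denominator and then has nothing left once $s_n^{m_n}$ is factored in. Within your own framework one could instead take $m_n\to\infty$ with $m_n=o(n)$ and $a_n\sim\epsilon_n^{1-(m_n+1)/n}/\norm{k_n}$, but the parameters you state do not prove part $a)$.
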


\subsection{Coexistence of diffusion and integrability} 

A natural question in Hamiltonian dynamics is whether a real analytic Hamiltonian system can be integrable on 
an open set of the phase space and not completely integrable. 

One aim of this paper is to show that such examples do exist. We actually construct real analytic Hamiltonians 
that are analytically integrable on half of the phase space while all orbits on the other side accumulate at infinity. 
We will work with non resonant frequencies  satisfying \eqref{eq_liouville_cond}. The main result is the following.

 \begin{Main}
 \label{thm_teorema_estrany}
 For any $\omega \in \R^d$ satisfying \eqref{eq_liouville_cond} there exists a real entire Hamiltonian $H$ as in $\eqref{HH}$ such that:

 \begin{itemize}
 \item[$i)$] \bk There exists a real analytic symplectic diffeomorphism $\Psi$ from $M^-=\T^d \times \R^{d-1}\times (-\infty,0)$ to itself, such that on $M^-$ we have 
$H\circ \Psi=H_{0}:=\langle \omega, r\rangle$.
 \item[$ii)$] \bk For any $(\theta, r)\in \T^d \times \R^{d-1}\times (0,\infty)$, we have $\lim \sup_{t\rightarrow \infty}{\abs{\Phi_H^{t}(\theta, r)}}=\infty$.
 \end{itemize}
 The BNF of $H$ at $\cT_0$ is given by $N(\cdot)=\langle \omega,\cdot\rangle$.
\end{Main}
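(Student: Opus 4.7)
The plan is to implement the AbC scheme \eqref{HaHaHaHa} in scenario (i) of the introduction: take $\omega(\cdot)\equiv \omega$ constant, and choose an almost-resonant sequence $\{k_j\}\subset \Z^{d-1}$ with respect to $\tilde\omega$, which the Liouville condition \eqref{eq_liouville_cond} makes possible. The new feature compared with Theorems \ref{thm_diff_general} and \ref{thm_fast} is that the entire functions $\phi_j$ must be chosen to be extremely small on a growing complex neighborhood of $(-\infty,0]$ so that the canonical transformation $\Psi_n$ of Section \ref{sec.bnf} (satisfying $H_n=H_0\circ\Psi_n$) converges on $M^-$ to a real analytic symplectic diffeomorphism, while on $(0,\infty)$ the $\phi_j$'s remain sizeable and drive every orbit to infinity.

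To select the $\phi_j$, I use a polynomial (Runge-type) approximation to produce entire functions with $\phi_j(0)=\phi_j'(0)=0$, $|\phi_j(s)|\le \epsilon_j\,|\langle\tilde\omega,k_j\rangle|$ on a complex neighborhood of $[-j,0]$ with $\sum_j\epsilon_j<\infty$, and $\phi_j(s)\ge \delta_j$ on an interval $[c_j,\infty)$ with $c_j\searrow 0^+$ and $\delta_j/|\langle\tilde\omega,k_j\rangle|\to\infty$. One then calibrates the polynomial degrees against the super-fast decay of $|\langle\tilde\omega,k_j\rangle|$ afforded by \eqref{eq_liouville_cond} so that $\sum_j\|\phi_j(s)\sin(2\pi\langle k_j,\tilde\theta\rangle)\|_{\Delta,\rho}<\infty$ for every $\Delta,\rho>0$, which gives a real entire limit $H=\lim_n H_n$. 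Each conjugation step is the symplectic map $\chi_j$ generated by $S_j(\theta,R)=\langle\theta,R\rangle-\phi_j(R_d)\cos(2\pi\langle k_j,\tilde\theta\rangle)/(2\pi\langle\tilde\omega,k_j\rangle)$; since it leaves $(\tilde\theta,s)$ invariant, the $\chi_j$'s commute, and $\Psi_n=\chi_2\circ\cdots\circ\chi_n$ converges on $M^-$ because the bound $|\phi_j/\langle\tilde\omega,k_j\rangle|\le \epsilon_j$ (and, via Cauchy estimates, the same kind of bound for its derivatives) holds there. The limit $\Psi$ satisfies $H\circ\Psi^{-1}=H_0$ on $M^-$, yielding $(i)$.

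For $(ii)$, since $s=r_d$ is a first integral and $\dot{\tilde\theta}=\tilde\omega$, one has on each slice $\{s=s_0\}$ the explicit formula
\[
\tilde r(t)-\tilde r_0=\sum_j\frac{k_j\,\phi_j(s_0)}{\langle\tilde\omega,k_j\rangle}\bigl[\sin(2\pi\langle k_j,\tilde\theta_0+t\tilde\omega\rangle)-\sin(2\pi\langle k_j,\tilde\theta_0\rangle)\bigr].
\]
Fix $s_0>0$; for all $j$ large enough that $c_j<s_0$, the $j$-th term has potential amplitude $\gtrsim \delta_j/|\langle\tilde\omega,k_j\rangle|\to\infty$. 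A diagonal choice of times $t_j\sim 1/|\langle\tilde\omega,k_j\rangle|$, together with a Dirichlet/pigeonhole argument ruling out destructive interference between the different indices, gives $|\tilde r(t_j)|\to\infty$, independently of $(\theta_0,\tilde r_0)$. The BNF claim follows formally: the vanishing 1-jet $\phi_j(0)=\phi_j'(0)=0$ gives $H=\langle\omega,r\rangle+\cO(r^2)$, and the formal analogues of the $\chi_j$'s -- well defined in $\mathcal C^\omega(\T^d_\rho)[[r]]$ since $\langle\tilde\omega,k_j\rangle\ne 0$ -- remove the sine harmonics order by order, producing $N_H=\langle\omega,\cdot\rangle$.

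The hard part is the construction of the $\phi_j$: producing them simultaneously entire, with vanishing 1-jet at the origin, arbitrarily small on ever larger complex neighborhoods of $(-\infty,0]$, and bounded below by $\delta_j$ on $[c_j,\infty)$ with $c_j\to 0^+$. The polynomial degrees forced by Runge's theorem grow with $j$, so one must calibrate $\epsilon_j$, $\delta_j$, these degrees, and the Liouville rate $|\langle\tilde\omega,k_j\rangle|$ consistently in order to keep $H$ entire while obtaining both integrability on all of $M^-$ and diffusion from every point of $M^+$ -- the latter being strictly stronger than the diffusivity at the torus established in Theorems \ref{thm_diff_general}--\ref{thm_fast}.
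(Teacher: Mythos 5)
Your overall strategy is the paper's: keep $\om(\cdot)\equiv\omega$, pick an almost-resonant sequence $\{k_j\}$ from \eqref{eq_liouville_cond}, and choose $\phi_j$ so that the explicit conjugacies $\Psi_n$ of Section \ref{sec.bnf} converge on $\{s<0\}$ while the same harmonics drive every orbit with $s>0$ to infinity. But there are two genuine gaps. First, the BNF claim: requiring only $\phi_j(0)=\phi_j'(0)=0$ is not enough to conclude $N_H=\langle\omega,\cdot\rangle$. The formal conjugacy is $f=\frac{1}{2\pi}\sum_j\langle\tilde\omega,k_j\rangle^{-1}\phi_j(s)\cos(2\pi\langle k_j,\tilde\Theta\rangle)$, and for this to be an element of $\mathcal C^{\omega}(\T^d_\rho)[[r]]$ each coefficient of $s^p$ must lie in $C^{\omega}_{0,\rho}$; if every $\phi_j$ vanishes only to order $2$, the coefficient of $s^2$ is an \emph{infinite} sum $\sum_j \frac{\phi_j''(0)}{4\pi\langle\tilde\omega,k_j\rangle}\cos(2\pi\langle k_j,\tilde\Theta\rangle)$ whose convergence in $C^{\omega}_{0,\rho}$ is exactly what your lower bounds $\phi_j\geq\delta_j$ with $\delta_j/\abs{\langle\tilde\omega,k_j\rangle}\to\infty$ tend to destroy. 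The paper's fix (Remark \ref{remark.explain.bnf}, Proposition \ref{prop.BNF}) is to make $\phi_j$ vanish to order $j$, so that each power $s^p$ receives contributions from finitely many $j$ and the coefficients are trigonometric \emph{polynomials}; this is precisely why Theorem \ref{th03}~b), with order-$2$ vanishing, carries no BNF claim. You need to add increasing vanishing orders to your wishlist.

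Second, the central construction of the $\phi_j$ is left as an unproven Runge-type approximation scheme, and this is where all the difficulty you acknowledge actually lives. The paper shows no such machinery is needed: $\phi_j(s)=\langle\tilde\omega,k_j\rangle\, s^j e^{\norm{k_j}s}$ (Theorem \ref{th3}) simultaneously gives entirety of $H$ (since $\abs{\langle\tilde\omega,k_j\rangle}\leq e^{-u_j\norm{k_j}}$ with $u_j\to\infty$ absorbs $e^{\norm{k_j}(\Delta+(2\pi d+1)\rho)}$), the ratio $\phi_j/\langle\tilde\omega,k_j\rangle=s^je^{\norm{k_j}s}$ exponentially small on $\{\mathrm{Re}\, s\leq-\rho\}$ for every $\rho>0$ (hence convergence of $\Psi_n$ on each $M^-_\rho$, Proposition \ref{prop.conjugacy}), the blow-up $s^je^{\norm{k_j}s}\to\infty$ for $s>0$, and order-$j$ vanishing at $0$. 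Relatedly, your treatment of part $(ii)$ via the infinite-sum solution formula and a "Dirichlet/pigeonhole argument ruling out destructive interference" is vague at the one point that matters: controlling the contribution of the indices $i\neq j$ at time $t_j$. The paper avoids this entirely by verifying the finite-time property ${(\mathcal{P}^5_{n})}$ for the \emph{truncated} flow $\Phi_n$ (where the $n$-th term dominates the finitely many earlier ones for every $z\in Q^+_n$, by choosing $k_n$ inductively large) and then transferring it to $\Phi_H$ by $C^3$-closeness and Gronwall (Proposition \ref{prop.approximation}), choosing $k_{n+1}$ only afterwards. I would recommend restructuring your argument around that truncation-plus-openness scheme and the explicit $\phi_j$ above; as written, the proposal is a plausible plan whose two key steps (the existence of the $\phi_j$ with all the required properties, and the BNF computation) are not established.
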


\bigskip 

The question of coexistence of integrability and diffusion for analytic systems 
remains completely open if integrability is required to be non-degenerate (twist integrability for example). With a similar construction to that of Theorem \ref{thm_teorema_estrany}, we can obtain the following examples. 

\vspace{0.2cm}

\begin{Main}
 \label{thm_teorema_Cr}
 For any $\omega \in \R^d$ satisfying \eqref{eq_liouville_cond}, for any $l \in \N^*$, there exists a real entire Hamiltonian
 $H$ as in $\eqref{HH}$ and a symplectic diffeomorphism $\Psi$ on $\T^d \times \R^{d}$, that is of class $C^l$ but not of class $C^{l+1}$, such that $H\circ \Psi=H_{0}:=\langle \omega, r\rangle$. The BNF of $H$ at $\cT_0$ is given by $N(\cdot)=\langle \omega,\cdot\rangle$.
\end{Main}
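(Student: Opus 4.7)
The plan is to adapt the construction of Theorem \ref{thm_teorema_estrany}, using the AbC scheme \eqref{HaHaHaHa} in scenario (i), with constant frequency $\om(s)\equiv\omega$ and $\phi_j(s)=a_j s^2$. With this choice $H$ is automatically of the form \eqref{HH} since the perturbation vanishes to order $r^2$. The amplitudes $a_j>0$ will be tuned so that the global conjugacy is of class $C^l$ but not $C^{l+1}$. I extract a lacunary subsequence $\{k_j\}$ from $\{\bar k_j\}$ of \eqref{eq_liouville_cond} (say $\|k_{j+1}\|\geq 2\|k_j\|$), with $\langle \tilde\omega,k_j\rangle>0$ after flipping signs, and with a fixed coordinate $i_0<d$ such that $|k_{j,i_0}|=\|k_j\|$ for every $j$.

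\textbf{The conjugacy.} Each term $-a_j s^2\sin(2\pi\langle k_j,\tilde\theta\rangle)$ is canonically eliminated by the time-one flow $\psi_j$ of
\[
F_j(\theta,r)=\frac{a_j s^2}{2\pi\langle\tilde\omega,k_j\rangle}\cos(2\pi\langle k_j,\tilde\theta\rangle).
\]
Because $F_j$ depends only on $(\tilde\theta,s)$, its flow preserves $\tilde\theta$ and $s$, and a direct computation gives $H\circ\psi_j=H+a_j s^2\sin(2\pi\langle k_j,\tilde\theta\rangle)$, cancelling the $j$-th term. Moreover $\{F_j,F_k\}\equiv 0$, so the $\psi_j$ pairwise commute; their composition $\Psi$ acts by $\tilde\theta\mapsto\tilde\theta$, $s\mapsto s$, together with explicit sum-form shifts in $\theta_d$ and $\tilde r$. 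Being of translation type on every fiber $\{\tilde\theta=\mathrm{cst},\ s=\mathrm{cst}\}$, once $C^l$-convergence is established $\Psi$ is a global symplectic diffeomorphism of $\T^d\times\R^d$ satisfying $H\circ\Psi=H_0$.

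\textbf{Tuning and regularity.} Set
\[
a_j := \frac{\langle\tilde\omega,k_j\rangle\,\log\|k_j\|}{\|k_j\|^{l+2}}.
\]
By \eqref{eq_liouville_cond} one has $a_j\leq e^{-C\|k_j\|}$ eventually for every $C>0$, so $H=\langle\omega,r\rangle-s^2\sum_j a_j\sin(2\pi\langle k_j,\tilde\theta\rangle)$ is real entire. The $r_i$-component of $\Psi-\mathrm{Id}$ is $\sum_j (a_j s^2 k_{j,i}/\langle\tilde\omega,k_j\rangle)\sin(2\pi\langle k_j,\tilde\theta\rangle)$, and all its partial derivatives in $\tilde\theta$ of order $\leq l$ are uniformly dominated on any compact set by
\[
\sum_j \frac{a_j\|k_j\|^{l+1}}{\langle\tilde\omega,k_j\rangle}=\sum_j\frac{\log\|k_j\|}{\|k_j\|},
\]
which converges by lacunarity; the $s$-derivatives only add bounded polynomial factors. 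Hence $\Psi\in C^l$. Conversely, for any fixed $s\neq 0$ the $r_{i_0}$-shift is a lacunary Fourier series in $\tilde\theta$ whose $k_j$-coefficient $c_j$ satisfies $\|k_j\|^{l+1}|c_j|=s^2\log\|k_j\|\to\infty$; since any $f\in C^{l+1}(\T^{d-1})$ satisfies $\sup_k\|k\|^{l+1}|\hat f(k)|<\infty$ (by integration by parts along the coordinate realizing $\|k\|$), the $r_{i_0}$-shift cannot be $C^{l+1}$ in $\tilde\theta$, so $\Psi\notin C^{l+1}$.

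\textbf{Normal form and main difficulty.} For the BNF claim, the order-$r_d^2$ cohomological equation $\langle\tilde\omega,\partial_{\tilde\theta} g\rangle=\sum_j a_j\sin(2\pi\langle k_j,\tilde\theta\rangle)$ admits a formal solution $g$ with Fourier coefficients $\log\|k_j\|/\|k_j\|^{l+2}$; all higher-order equations are trivially fulfilled because Poisson brackets between functions depending only on $(\tilde\theta,s)$ vanish, so $N_H=\langle\omega,r\rangle=N(\cdot)$. The main difficulty is the two-sided calibration: $a_j$ must be small enough for the $l$-th derivatives of the conjugacy to sum absolutely, yet large enough that the Fourier coefficients of $\Psi-\mathrm{Id}$ fail the classical $C^{l+1}$ decay. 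The logarithmic factor in $a_j$ realizes exactly this dichotomy, with the super-exponential smallness of $\langle\tilde\omega,k_j\rangle$ from \eqref{eq_liouville_cond} providing the headroom needed to keep $H$ entire.
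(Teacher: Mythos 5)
Your construction of $\Psi$ and the two\nobreakdash-sided $C^l$\nobreakdash-versus\nobreakdash-$C^{l+1}$ calibration are sound and structurally parallel to the paper's (Theorem \ref{th3bis} and Proposition \ref{prop.conjugacy}, case $vi)$, which use the generating function \eqref{eq_generating_function} and $\phi_j(s)=\langle\tilde\omega,k_j\rangle s^j\norm{k_j}^{-l-1}j^{-2}$ where you use time-one flows and a factor $\log\norm{k_j}$). The genuine gap is the final claim that the BNF is $\langle\omega,r\rangle$. With your choice $\phi_j(s)=a_js^2$, the coefficient of $s^2$ in the (unique up to constants) formal normalizing series is the lacunary Fourier series $\frac{1}{2\pi}\sum_j\frac{a_j}{\langle\tilde\omega,k_j\rangle}\cos(2\pi\langle k_j,\tilde\Theta\rangle)$ with coefficients $\log\norm{k_j}/\norm{k_j}^{l+2}$. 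These decay only polynomially in $\norm{k_j}$, so this coefficient is a finitely differentiable function of $\tilde\Theta$ that is not holomorphic on any strip $\T^d_\rho$. Since the paper's definition of the BNF requires the conjugating formal series $f$ to lie in $\mathcal C^{\omega}(\T^d_\rho)[[r]]$, and the order\nobreakdash-$2$ coefficient of $f$ is forced by the homological equation, your Hamiltonian has no BNF at $\cT_0$ in this sense; the last sentence of the theorem fails for your $H$. This is precisely the pitfall flagged in Remark \ref{remark.explain.bnf} and in the footnote of Proposition \ref{prop.BNF}, and it is why the other fixed-power construction, Theorem \ref{th03} b), is explicitly excluded from Proposition \ref{prop.BNF}.

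The repair is the paper's: take $\phi_j(s)=\langle\tilde\omega,k_j\rangle s^j\norm{k_j}^{-l-1}j^{-2}$ with \emph{increasing} powers $s^j$. Then the coefficient of $s^p$ in the conjugating series is the single trigonometric monomial $\frac{1}{2\pi}\norm{k_p}^{-l-1}p^{-2}\cos(2\pi\langle k_p,\tilde\Theta\rangle)$, hence entire, and the BNF is $\langle\omega,r\rangle$. Your regularity analysis survives this change almost verbatim: on compact sets the derivatives of order $\leq l$ are dominated by $\sum_j\norm{k_j}^{l+1}\norm{k_j}^{-l-1}j^{-2}|s|^j\leq\sum_j j^{-2}$, while at any fixed $s\neq 0$ the $(l+1)$\nobreakdash-st $\tilde\Theta$\nobreakdash-derivative of the $j$\nobreakdash-th term has size of order $\norm{k_j}\,j^{-2}|s|^j$, which blows up along the lacunary frequencies provided $\norm{k_j}$ grows superexponentially in $j$ --- a freedom you already have, since $\{k_j\}$ is an arbitrarily fast-growing subsequence of $\{\bar k_j\}$.
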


Observe that the main ingredient in the proof of Theorem \ref{thm_teorema_Cr} (see below the statement and proof of Theorem \ref{th3bis} that gives the explicit construction for Theorem \ref{thm_teorema_Cr}) is a fine tuning of the effect of the almost resonances of $\omega$ on the instabilities of a Hamiltonian as in \ref{HaHaHaHa}. This fine tuning has the effect of maintaining linearizability in class $C^l$ but destroying it in class $C^{l+1}$. An analogy can be seen with Sternberg's linearization theorem near a hyperbolic fixed point that gives $C^l$ regularity of the linearization provided a sufficient number, related to $l$, of non-resonance conditions hold \cite{sternberg}.   

 \subsection{KAM stability} \label{sec.KAM}

It was conjectured by Herman (see \cite{herman_icm}) that, without any non-degeneracy condition, 
a Diophantine KAM torus of an analytic Hamiltonian  is accumulated by a set of positive measure of KAM tori. Herman's conjecture is known to be true in two degrees of freedom \cite{russmann}, but remains open in general,
with some progress being made in \cite{EFK}, where it is shown that an analytic invariant torus $\cT_0$ with 
Diophantine frequency $\om$ is never isolated from other KAM tori.

Herman's conjecture on KAM stability of a Diophantine equilibrium or QP torus  is  known to be true in the smooth category for $d=2$ due to Herman's last geometric theorem (see \cite{FKrik}). 
Counter-examples to the conjecture in $C^\infty$ and with arbitrary frequencies were build in \cite{EFK} for $d\geq 4$, and later in \cite{FSpoint} for $d=3$. 

One aim of this work is to build, starting from $3$ degrees of freedom and for sufficiently Liouville frequencies $\omega$,
real analytic  Hamiltonians that have QP tori with frequency $\omega$ that are not accumulated by a set of positive measure of KAM tori. This shows that some arithmetic condition in Herman's conjecture is indeed necessary.

\begin{Main}
 \label{thm_isolated}
For any $\omega \in \R^d$, $d\geq 3$ satisfying \eqref{eq_liouville_cond} there exists a real entire Hamiltonian $H$ as in \eqref{HH} such that for any $(\theta, r)\in \T^d \times \R^d$ with $r_d \neq 0$, 
 $$\lim \sup_{t\rightarrow \infty}{\abs{\Phi_H^{t}(\theta, r)}}=\infty.$$ 
The BNF of $H$ at $\cT_0$ is given by $N(\cdot)=\langle \omega,\cdot\rangle$.
\end{Main}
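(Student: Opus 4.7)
The plan is to realise $H$ through the AbC template \eqref{HaHaHaHa} in scenario i): take $\omega(s)\equiv\omega$, and let the sequence $\{\tk_j\}\subset\Z^{d-1}$ consist of the almost resonant vectors $\{\bar k_j\}$ provided by \eqref{eq_liouville_cond} (repeated as many times as needed). The essential difference with Theorem \ref{thm_teorema_estrany}, where only the half $\{s>0\}$ was pushed to infinity, is that here each $\phi_j(s)$ must drive drift on both sides of $s=0$ simultaneously; I will therefore take $\phi_j(s)$ to be an \emph{even} real entire function that vanishes at $s=0$ to a very high order $2m_j\uparrow\infty$ and is strictly positive for every $s\neq0$, the simplest model being $\phi_j(s)=c_j s^{2m_j}$.

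Granting this template, the proof splits into two soft steps plus the dynamical core. Convergence of $H=\lim_n H_n$ in $C^{\omega}_{\Delta,\rho}$ for every $\Delta,\rho>0$ is immediate once the coefficients $c_j$ are taken to decay so fast that $\sum_j|c_j|\Delta^{2m_j}$ converges for each $\Delta$, since the trigonometric factors in $\tilde\theta$ are uniformly bounded on $\T^{d-1}_\rho$ (after adjusting $c_j$ by the standard exponential factor in $\norm{\tk_j}$); both $c_j$ and $m_j$ are free parameters so this is purely bookkeeping. The identity $N_H=\langle\omega,r\rangle$ then follows because the perturbation
\[
V(\theta,r)=-\sum_j\phi_j(r_d)\sin\bigl(2\pi\langle\tk_j,\tilde\theta\rangle\bigr)
\]
has zero $\theta$-mean and vanishes at $r=0$ to order $2m_j$ in the only active action $r_d$; arranging $m_j\uparrow\infty$ sufficiently fast, the formal Birkhoff reduction finds nothing to normalise beyond the linear part and leaves $N(r)=\langle\omega,r\rangle$. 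The formal conjugacy will of course diverge (as it must, $\omega$ being Liouville and $\cT_0$ unstable).

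The heart of the argument is the unbounded drift from every $(\theta,r)$ with $r_d\neq 0$. Each $H_n$ is explicitly conjugated by $\Psi_n$ (Section \ref{sec.bnf}) to $H_0=\langle\omega,r\rangle$, so the orbits of $H_n$ are bounded; the instability arises from the wild accumulation of the $\Psi_n$. Concretely, an explicit integration of the Hamiltonian $-\phi_{n+1}(s)\sin(2\pi\langle\tk_{n+1},\tilde\theta\rangle)$ on top of the $H_n$-flow shows that, on the section $\{r_d=s_0\}$, the variation of $\tilde r$ over a resonant time window is of order $\phi_{n+1}(s_0)\norm{\tk_{n+1}}/|\langle\tilde\omega,\tk_{n+1}\rangle|$, which by \eqref{eq_liouville_cond} can be made arbitrarily large by selecting $\tk_{n+1}=\bar k_{j(n)}$ with $j(n)$ large enough. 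Crucially, since $\phi_{n+1}(s_0)>0$ for every $s_0\neq 0$, this mechanism fires on both components of $\{r_d\neq 0\}$ at once. I would then run a standard AbC diagonal argument against an exhaustion $K_n\uparrow\T^d\times(\R^d\setminus\{r_d=0\})$: at stage $n$, tune $c_{n+1}$, $m_{n+1}$ and $j(n+1)$ so that every orbit of $H_{n+1}$ starting in $K_n$ reaches distance $\geq n$ from $\cT_0$ within some explicit time $T_n$, and then force all subsequent terms to be so small in the norm controlling the $T_n$-time flow that this drift is not undone in the limit $H$.

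The main obstacle is the simultaneous bookkeeping. The functions $\phi_j$ must vanish at $s=0$ to orders growing fast enough to keep $N_H$ trivial and $H$ entire, yet they must remain large enough on a shrinking annulus $\{|s|\geq \delta_n\}$ that the drift produced at stage $n$ is effective on the whole of $K_n$; and the sequences $m_j$, $c_j$, $j(n)$, $\delta_n$, $T_n$ must be braided against the almost-resonance rate from \eqref{eq_liouville_cond} so that the inductive choice closes. This is the delicate tuning on which the construction rests, and it is essentially the same combinatorial task as in Theorems \ref{thm_teorema_estrany} and \ref{thm_fast}, except that the even-parity choice of $\phi_j$ has replaced the one-sided choice used there.
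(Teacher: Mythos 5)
Your proposal is correct and follows essentially the same route as the paper's Theorem \ref{th4} (case $vii)$): the template \eqref{eq_ham2} with $\omega(\cdot)\equiv\omega$, resonant vectors $k_j=\bar k_{j(n)}$ from \eqref{eq_liouville_cond}, a $\phi_j$ vanishing at $s=0$ to an order growing with $j$ so that the formal generating function is a legitimate power series and the BNF is $\langle\omega,r\rangle$, an even-in-$s$ amplitude so the drift fires on both sides of $\{r_d=0\}$, finite-stage drift for $H_n$ on an exhaustion, and transfer to $H$ by fast growth of $k_{n+1}$. The only (cosmetic) difference is that the paper takes $\phi_j(s)=\langle\tilde\omega,k_j\rangle s^j e^{\norm{k_j}s^2}$, so the small divisor is cancelled exactly and the two-sided blow-up comes from $e^{\norm{k_j}s^2}$, whereas you keep the raw divisor $\langle\tilde\omega,k_j\rangle^{-1}$ as the amplifier against an even monomial $c_js^{2m_j}$ --- the same mechanism with the Liouville condition entering in the drift estimate rather than in the convergence estimate.
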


Note that Bounemoura proved in \cite{Bounemoura} that an invariant quasi-periodic torus is KAM-stable under the hypothesis that the Hamiltonian is sufficiently smooth and has a non-degenerate  Hessian matrix of its BNF of degree 2 (that part of the BNF is defined for all non-resonant frequencies). In our example, the entire BNF can be defined and is in fact equal to $\langle \omega, r\rangle$. 
Theorem \ref{thm_isolated} thus shows that R\"ussmann's local integrability result of Diophantine QP tori with a degenerate BNF cannot be generalized to the case of sufficiently Liouville vectors. 

\medskip 

\begin{remark} In our construction $\cT_0$ is not isolated, the  hyperplane $r_d=0$ is foliated 
 by invariant tori with frequency $\om$. In \cite{EFK} it was proved that Diophantine analytic QP tori are always accumulated by other QP tori. 
 The question of the existence of Liouville QP tori that are completely isolated is still open, even for smooth Hamiltonians. 
\end{remark}

\section{Constructions} 
\label{sec_constructions}

Given $\om \in \R^d$, all our examples will have the form:
\begin{equation}
\label{eq_ham2} 
 H=\lim_{n\rightarrow \infty}{H_n}, \quad H_n(\theta,r)=\langle  \om(s),r \rangle -\sum_{j=2}^{n}\phi_j(s) \sin (2\pi \langle k_j, \tilde{\theta} \rangle).
\end{equation}
We can now give in Theorems \ref{th1}--\ref{th4}   the specific forms of the Hamiltonians that will satisfy Theorems \ref{theorem.unstable}--\ref{thm_isolated}.
Theorem \ref{theorem.unstable} can be rewritten as follows.

\begin{theorem} \label{th1} Let $\omega \in \R^d$, $d\geq 3$. Choosing $\omega(\cdot)$ to be $\hat \omega(\cdot)$ (or $\bar \omega(\cdot)$),  there exists a sequence $\{k_j\} \subset \Z^{d-1}$, such that the Hamiltonian in \eqref{eq_ham2}
with $\phi_j(s)={s^j} e^{-j \norm{k_j}}$ satisfies the first (or second) conclusion of Theorem \ref{theorem.unstable}. 
\end{theorem}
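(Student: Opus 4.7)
The Hamiltonians $H_n$ in \eqref{eq_ham2} share a crucial structural feature: they do not depend on $\theta_d$, so $s=r_d$ is a first integral, and the dynamics decouples into a one-parameter family of explicitly solvable Hamiltonians on $\T^{d-1}\times\R^{d-1}$. Instability will come from the fact that for each $j$ one can find a value $s_j\to 0$ at which the $j$-th mode becomes resonant with $\tilde\omega(\cdot)$ and drives a linear drift of $\tilde r$; the amplitude $\phi_j(s)=s^j e^{-j\|k_j\|}$ is what reconciles this drift with the requirement that $H=\sum_n(H_n-H_{n-1})$ be real and entire, and also with the existence of a formal Birkhoff normal form at $\cT_0$.

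\textbf{Convergence and BNF.} A direct bound on $D_{\Delta,\rho}$ gives
\[
\|\phi_j(s)\sin(2\pi\langle k_j,\tilde\theta\rangle)\|_{\Delta,\rho}\le (\Delta+\rho)^j\,e^{-j\|k_j\|+2\pi(d-1)\rho\|k_j\|},
\]
which for every fixed $\Delta,\rho$ is summable as soon as $\|k_j\|\to\infty$ fast enough; hence $H\in C^\omega$. For the BNF, introduce the formal generators
\[
g_j(\tilde\theta,r_d):=\frac{\phi_j(r_d)\cos(2\pi\langle k_j,\tilde\theta\rangle)}{2\pi\langle k_j,\tilde\omega(r_d)\rangle},\qquad V_j(\tilde\theta,r_d):=\phi_j(r_d)\sin(2\pi\langle k_j,\tilde\theta\rangle),
\]
where $\tilde\omega(\cdot)$ stands for $\tilde{\hat\omega}$ or $\tilde{\bar\omega}$; choosing $\langle k_j,\tilde\omega\rangle\neq 0$ makes the denominator have a nonzero constant term, so $g_j$ is a formal power series in $r_d$ starting at order $j$. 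Since all $g_j,V_{j'}$ depend only on $(\tilde\theta,r_d)$, a direct computation gives $\{V_{j'},g_j\}=\{g_{j'},g_j\}=0$; the Lie series for $H\circ\chi^{(j)}_1$ (time-one flow of $g_j$) therefore terminates, yielding $H\circ\chi^{(j)}_1=H+V_j$. Iterating formally over $j\geq 2$, which is well-defined order by order since $V_j=\mathcal O(r_d^j)$, produces a formal symplectic conjugacy from $H$ to $H_0=\langle\omega(\cdot),r\rangle$. By uniqueness of the BNF at a non-resonant torus this identifies $N_H$ with $\hat N$ (respectively $\bar N$).

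\textbf{Lyapunov instability.} In the $\hat\omega$ case, set $s_j:=-\langle k_j,\tilde\omega\rangle/k_{j,1}$, which makes $\langle k_j,\tilde{\hat\omega}(s_j)\rangle=0$. The explicit solution on the invariant slice $\{r_d=s_j\}$ reads $\tilde\theta(t)=\tilde\theta_0+t\,\tilde{\hat\omega}(s_j)$ and
\[
\tilde r(t)=\tilde r_0+2\pi t\,k_j\phi_j(s_j)\cos(2\pi\langle k_j,\tilde\theta_0\rangle)+\sum_{j'\neq j}\frac{k_{j'}\phi_{j'}(s_j)}{\langle k_{j'},\tilde{\hat\omega}(s_j)\rangle}\bigl[\sin(\cdots)-\sin(\cdots)\bigr].
\]
Taking $\tilde r_0=0$ and $\tilde\theta_0$ with $\cos(2\pi\langle k_j,\tilde\theta_0\rangle)=1$, the middle term is an unbounded linear drift while the last is bounded by $2\sum_{j'\neq j}\|k_{j'}\|\phi_{j'}(s_j)/|\langle k_{j'},\tilde{\hat\omega}(s_j)\rangle|$. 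Choosing $\{k_j\}$ inductively so that $|s_j|\to 0$, the $s_j$ are mutually separated, and $\|k_j\|$ grows fast enough that this remainder stays uniformly bounded, one obtains orbits starting at distance $|s_j|\to 0$ from $\cT_0$ whose $\tilde r$-component eventually exceeds any prescribed threshold, proving Lyapunov instability. The $\bar\omega$ case is analogous: the equation $\langle k_j,\tilde{\bar\omega}(s)\rangle=0$ is polynomial of degree $d-1$ in $s$ but still admits a root $s_j\to 0$ close to the linearized value $-\langle k_j,\tilde\omega\rangle/k_{j,1}$.

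\textbf{KAM stability and main difficulty.} In the $\bar N$ case with $\omega$ Diophantine, the components $1,r_d,r_d^2,\dots,r_d^{d-1}$ of $\bar\omega(r_d)$ are linearly independent polynomials in $r_d$, so $\nabla\bar N(r)$ lies in no proper hyperplane as $r$ varies; hence $\bar N$ is R\"ussmann-non-planar and Theorem~\ref{mB} supplies KAM stability. The genuinely delicate point of the whole argument is the inductive construction of $\{k_j\}$ fulfilling all the requirements above simultaneously: $|s_j|\to 0$ yet $|s_j-s_{j'}|$ not too small, $\|k_j\|$ fast enough for entire convergence, $\langle k_j,\tilde\omega\rangle\neq 0$ so that the BNF generators are defined, and the non-resonant remainder $\sum_{j'\neq j}\|k_{j'}\|\phi_{j'}(s_j)/|\langle k_{j'},\tilde\omega(s_j)\rangle|$ uniformly bounded in $j$. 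The double decay $\phi_{j'}(s_j)=s_j^{j'}e^{-j'\|k_{j'}\|}$ (polynomial in $s_j$ and exponential in $\|k_{j'}\|$) is exactly what makes this tight balance achievable by a greedy induction.
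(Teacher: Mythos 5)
Your proposal is correct and follows essentially the same route as the paper: resonant slices $r_d=s_j$ with $\langle k_j,\tilde\omega(s_j)\rangle=0$ producing a linear drift in $\tilde r$ against a bounded oscillatory remainder, a (formally commuting) generating-function conjugacy whose order-by-order well-definedness rests on the factor $s^j$ in $\phi_j$, and R\"ussmann non-degeneracy of $\bar N$ for the KAM part. The only real divergence is bookkeeping: where you bound the full non-resonant tail $\sum_{j'\neq j}$ of the exactly solved flow of the limit $H$ (which obliges you to keep $|\langle k_{j'},\tilde\omega(s_j)\rangle|$ under control for the later-chosen $j'>j$ via separation of the $s_{j'}$), the paper instead verifies the escape property for the truncation $H_n$ only and transfers it to $H$ by $C^3$-openness of the finite-time property (Gronwall), choosing $k_{n+1}$ afterwards.
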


Although Theorem \ref{th1} holds for all frequencies its proof depends on whether the frequency is resonant or not and also on the form of $\omega(\cdot)$.
Different sequences must be constructed in the proof for the different cases.

Consider next $\omega = (\tilde{\omega}, \omega^d)$ with $\tilde{\omega} \notin \Omega^{d-1}_{\tau}$, $d\geq 3$. 
Then up to a permutation of indices for $\omega$ Theorem \ref{thm_diff_general} can 
be without loss of generality restated as follows.

\begin{theorem} \label{th2} For any $C, \tau>0$, there exists a sequence 
$\{k_j\}\subset \Z^{d-1}$ such that for $\phi_j(s)={s^j} e^{-\frac{C}{2} \norm{k_j}}$ and $\omega(\cdot)=\hat{\omega}(\cdot)$ the Hamiltonian in \eqref{eq_ham2} belongs to $C^{\omega}_{\rho}$, $\rho=\frac{C}{8\pi d}$, and satisfies the conclusion of Theorem \ref{thm_diff_general}.
\end{theorem}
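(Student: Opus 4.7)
\bk The plan is to choose the sequence $\{k_j\}\subset \Z^{d-1}$ inductively so that three properties hold simultaneously:
(a) $\|k_j\|/j\to\infty$ fast enough to guarantee convergence of \eqref{eq_ham2} in $C^\omega_\rho$;
(b) $|\langle\tilde\omega,k_j\rangle|=o(\|k_j\|^{-\tau})$ with $|k_{j,1}|=\|k_j\|$ (the latter being achievable by a pigeonhole on the witnesses of the Liouville character of $\tilde\omega$, after the permutation of coordinates noted before the statement);
(c) writing $s_j^*:=-\langle\tilde\omega,k_j\rangle/k_{j,1}$, the value $s_j^*$ is well separated from every $s_{j'}^*$ with $j'<j$, so that the divisors $\alpha_l(s_j^*):=\langle k_l,\tilde{\hat\omega}(s_j^*)\rangle$ for $l\neq j$ are suitably bounded below.

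To check analyticity, on $D_{\Delta,\rho}$ one has $|\sin(2\pi\langle k_j,\tilde\theta\rangle)|\leq e^{2\pi(d-1)\rho\|k_j\|}$, so with $\rho=C/(8\pi d)$ each summand in \eqref{eq_ham2} is bounded by $(\Delta+\rho)^{j}e^{-C\|k_j\|/4}$; condition (a) then yields absolute convergence for every $\Delta>0$, hence $H\in C^\omega_\rho$. For the Birkhoff normal form, since $\phi_j(s)=s^je^{-C\|k_j\|/2}$ starts at order $s^j$, one has $H=H_n+\cO(r^{n+1})$ as formal power series in $r$ at $r=0$. As described in Section \ref{sec_constructions}, each $H_n$ is conjugate to $\hat N(r)=\langle\hat\omega(s),r\rangle$ through an explicit canonical transformation, so its formal BNF equals $\hat N$; by uniqueness of the BNF at a non-resonant torus, $N_H=\hat N$.

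For the diffusion, fix $j$ and pick the initial condition $z_j=(\tilde\theta_j^*,\theta_{d,0},0,\ldots,0,s_j^*)$ with $\tilde\theta_j^*$ chosen so that $\cos(2\pi\langle k_j,\tilde\theta_j^*\rangle)=1$. Since $\dot s=0$ and $\dot{\tilde\theta}=\tilde{\hat\omega}(s)$ does not depend on $r_1,\ldots,r_{d-1}$, one has $\tilde\theta(t)=\tilde\theta_j^*+t\,\tilde{\hat\omega}(s_j^*)$, and the resonance relation $\langle k_j,\tilde{\hat\omega}(s_j^*)\rangle=0$ keeps the $j$-th harmonic frozen at its maximum, while all other harmonics oscillate with non-zero frequency $\alpha_l(s_j^*)$. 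Integrating Hamilton's equations gives
\[
\tilde r(t)=2\pi k_j\,\phi_j(s_j^*)\,t+R_j(t),\qquad |R_j(t)|\leq \sum_{l\neq j}\frac{2\,\|k_l\|\,\phi_l(s_j^*)}{|\alpha_l(s_j^*)|},
\]
which by (c) together with the rapid decay of $\phi_l(s_j^*)=(s_j^*)^l e^{-C\|k_l\|/2}$ (smallness in $l$ coming from the $s^l$ factor for $l>j$ and from the exponential factor for $l<j$ via (a)) is much smaller than $1/|s_j^*|$. Hence the orbit reaches distance $1/|s_j^*|$ from $\cT_0$ at time $t_j\sim e^{C\|k_j\|/2}/(|s_j^*|^{j+1}\|k_j\|)$. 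From (b) and $|k_{j,1}|=\|k_j\|$ one has $|s_j^*|=o(\|k_j\|^{-(\tau+1)})$, hence $\|k_j\|\leq |s_j^*|^{-1/(\tau+1)}$; the logarithmic terms in $\log t_j$ are absorbed by the leading $C\|k_j\|/2$, and we obtain $T(r_n)\leq \exp(C r_n^{-1/(\tau+1)})$ with $r_n:=|s_n^*|\to 0$.

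The main technical obstacle is the control of the oscillatory remainder $R_j(t)$, which rests on a delicate inductive bookkeeping of the sizes $\|k_j\|$, the Liouville decay rate of $|\langle\tilde\omega,k_j\rangle|$, and the positions $s_j^*$. For $j'<j$ the divisors $|\alpha_{j'}(s_j^*)|=|\langle k_{j'},\tilde\omega\rangle+s_j^*\,k_{j',1}|$ must stay safely bounded below, which amounts to keeping $s_j^*$ outside a small neighborhood of each previously chosen $s_{j'}^*$; for $j'>j$ the smallness is automatic from the factor $(s_j^*)^{j'}$ provided $\|k_{j'}\|$ grows sufficiently fast at the next step of the induction. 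This type of bookkeeping, parallel to the one developed in the AbC constructions of \cite{FSpoint,EFK}, is the heart of the proof.
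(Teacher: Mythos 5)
Your proposal is correct and follows essentially the same route as the paper: the same resonant parameters $s_j^*=-\langle\tilde\omega,k_j\rangle/k_{j,1}$ with $\|k_j\|\le|s_j^*|^{-(\tau+1)^{-1}}$ (Lemma \ref{lemma_sequences} and \eqref{eq_dir2}), the same explicit integration of the flow into a linear drift $2\pi k_j\phi_j(s_j^*)t$ plus a bounded oscillatory remainder, and the same convergence and BNF computations. The only organizational difference is that the paper controls the tail $l>j$ by verifying the finite-time property $(\mathcal{P}^2_n)$ for the truncated flow of $H_n$ and then invoking the $C^3$-openness/Gronwall argument of Proposition \ref{prop.approximation}, whereas you estimate the full remainder $R_j(t)$ directly; the two are equivalent.
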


We pass now to the purely Liouville constructions of Theorems   \ref{thm_fast}--\ref{thm_isolated}.

\begin{theorem} \label{th03} For any $\omega \in \R^d$, $d\geq 3$ satisfying \eqref{eq_liouville_cond}, there exists a sequence $\{k_j\}\subset \Z^{d-1}$   such that:

\noindent { a)} If $\phi_j(s)={s^j} e^{-j \norm{k_j}}$ and $\omega(\cdot)\equiv \omega$
then the Hamiltonian in \eqref{eq_ham2} satisfies {a)} of Theorem \ref{thm_fast}.

\medskip 

\noindent { b)}  If  $\phi_j(s)={s^2} e^{-j \norm{k_j}}$ and $\omega(\cdot)\equiv \omega$
then the Hamiltonian in \eqref{eq_ham2} satisfies {b)}  of Theorem \ref{thm_fast}.

\end{theorem}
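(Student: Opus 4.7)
The plan follows the same skeleton in both cases, built on an inductive selection of the vectors $k_j$ within the Liouville sequence provided by \eqref{eq_liouville_cond}. At step $j$ one first prescribes $\|k_j\|$ (a growing sequence of positive integers, chosen so that the tail of $H$ decays rapidly) and then picks a Liouville vector of that norm whose inner product $|\langle\tilde{\omega},k_j\rangle|$ is as small as needed. Convergence of $H$ to a real entire function is the easy step: on $D_{\Delta,\rho}$ the $j$-th term is bounded by $\Delta^{\alpha_j}e^{-j\|k_j\|}e^{2\pi d\rho\|k_j\|}$ with $\alpha_j=j$ in case $a)$ and $\alpha_j=2$ in case $b)$, so if $\|k_j\|\to\infty$ the series is absolutely summable on every compact subset of $\T^d\times\R^d$.

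For the BNF claim in $a)$, the decisive feature is that $\phi_j(s)=s^je^{-j\|k_j\|}$ vanishes to order $j$ in $s$, so at each order $j\geq 2$ in $s$ only one Fourier mode contributes to the perturbation. The plan is to solve the formal cohomological equation via the ansatz
$$f(\tilde{\theta},s)=-\sum_{j\geq 2}\frac{\phi_j(s)}{2\pi\langle\tilde{\omega},k_j\rangle}\cos(2\pi\langle k_j,\tilde{\theta}\rangle),$$
observe that each coefficient of $s^j$ is a single trigonometric polynomial, hence an element of $C^{\omega}_{0,\rho}$ for every $\rho$, and verify by direct substitution that $H(\theta,r+\partial_\theta f(\theta,r))=\langle\omega,r\rangle$ as formal power series. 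This gives $N(r)=\langle\omega,r\rangle$, even though $f$ itself is typically divergent because of the Liouville smallness of $\langle\tilde{\omega},k_j\rangle$. In case $b)$ every mode contributes at order $s^2$ and the analogous series has coefficients that fail to be analytic on any complex strip in $\tilde{\theta}$, so no BNF exists and none is claimed.

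The heart is the diffusion estimate. Since $\dot s=0$ and $\dot{\tilde{\theta}}=\tilde{\omega}$ along the flow of $H$, one integrates
$$\tilde{r}(t)-\tilde{r}(0)=\sum_j k_j\phi_j(s)\,\frac{\sin(2\pi\langle k_j,\tilde{\theta}_0\rangle+2\pi t\langle k_j,\tilde{\omega}\rangle)-\sin(2\pi\langle k_j,\tilde{\theta}_0\rangle)}{\langle k_j,\tilde{\omega}\rangle}$$
and picks the initial condition so that one specific mode dominates in the linear-drift regime $t|\langle k_j,\tilde{\omega}\rangle|\ll 1$, where that mode contributes $\sim 2\pi t\|k_j\|\phi_j(s)\cos(2\pi\langle k_j,\tilde{\theta}_0\rangle)$. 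For case $b)$, the choice $s_n=r_n\sim\|k_n\|e^{-n\|k_n\|}$, combined with $|\langle\tilde{\omega},k_n\rangle|\leq r_n^4$, makes the $n$-th mode alone carry $\|\tilde{r}\|$ from $r_n$ to $1/r_n$ in time of order $r_n^{-4}$. For case $a)$, the extra $s^j$ factor forces the use of a mode with a smaller index, e.g.\ $j(n)=n-2$, together with $r_n\sim\|k_{j(n)}\|e^{-j(n)\|k_{j(n)}\|}$; a direct substitution then yields a diffusion time of order $r_n^{-n}$.

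To transfer the estimate from the integrable truncation $H_{n'}$ (for $n'$ slightly larger than the dominating index) to the full $H$, a Gronwall comparison of the two flows reduces the matter to making the sup norm of $H-H_{n'}$ negligible on the relevant region during the diffusion window, which is arranged by growing $\|k_j\|$ fast enough for $j>n'$ in the inductive construction. The main obstacle is the tight coupling of the constraints on $\{k_j\}$: $\|k_j\|$ must be large (for entireness and for the tail to be harmless at every scale), $|\langle\tilde{\omega},k_j\rangle|$ must be much smaller still (so that the $j$-th mode remains in its linear regime for times of order $r_n^{-n}$ or $r_n^{-4}$), and the precise balance between the two must reproduce the claimed rates. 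The hypothesis \eqref{eq_liouville_cond} supplies exactly the arithmetic room to satisfy these coupled demands within a single sequence.
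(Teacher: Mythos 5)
Your proposal is correct and follows essentially the same route as the paper: convergence by direct estimates on $D_{\Delta,\rho}$, the BNF $\langle\omega,r\rangle$ via the explicit formal conjugacy (the paper's generating functions $S_n$), diffusion for the truncated flows by isolating a single resonant mode of $\dot{\tilde r}$ in its linear regime with initial condition $r=(0,\dots,0,s_n)$ while the remaining modes stay bounded, and a Gronwall transfer to the limit by growing $\norm{k_j}$ fast. Your specific numerology (e.g.\ $s_n\sim\norm{k_n}e^{-n\norm{k_n}}$ and the dominant index $n-2$ in case a)) differs slightly from the paper's choice $s_n=e^{-n^2\norm{k_n}}$ with dominant index $n$, but this is an inessential variation.
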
 

 \begin{remark} \label{remark.explain.bnf} We will see in Section \ref{sec.bnf} why taking powers $s^j$ in $\phi_j(s)$ as in $a)$ is required to guarantee that the BNF of $H$ at $\cT_0$ is given by $N(\cdot)=\langle \omega,\cdot\rangle$. Of course, this has the inconvenient of slowing down the diffusion compared to the definition of $\phi_j(s)$ with an $s^2$ as in $b)$. \end{remark}

\begin{theorem} \label{th3} For any $\omega \in \R^d$, $d\geq 3$ satisfying \eqref{eq_liouville_cond}, there exists a sequence $\{k_j\}\subset \Z^{d-1}$   such that if 
$\phi_{j}(s)= \langle \tilde{\om}, k_j \rangle {s^j} e^{\norm{k_j}s}$ and $\omega(\cdot)\equiv \omega$
then the Hamiltonian in \eqref{eq_ham2} satisfies the conclusion of Theorem \ref{thm_teorema_estrany}.
\end{theorem}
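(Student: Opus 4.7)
The plan is to follow the approximation-by-conjugation framework, with generating Hamiltonians
$$\tilde W_j(\theta,s):=\frac{s^j\, e^{\norm{k_j}s}}{2\pi}\cos(2\pi\langle k_j,\tilde\theta\rangle),$$
chosen so that the factor $e^{\norm{k_j}s}$ annihilates the perturbations on $M^-$ while amplifying them on $\{s>0\}$. First I would select $\{k_j\}$ as a sufficiently sparse subsequence of the Liouville sequence $\{\bar k_j\}$ in \eqref{eq_liouville_cond}, writing $\abs{\langle\tilde\omega,k_j\rangle}\le e^{-\beta_j\norm{k_j}}$ with $\beta_j\to\infty$, and requiring $\norm{k_{j+1}}$ to dominate the parameters appearing in the tail estimates below. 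Entirety of $H$ is then a routine Cauchy estimate: on $\abs{s}\le R$, $\abs{\im\tilde\theta}\le\rho$ the $j$-th summand is bounded by $R^j e^{(R+2\pi\rho-\beta_j)\norm{k_j}}$, which is summable once $\beta_j>R+2\pi\rho$.

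The crucial algebraic observation is that every $\tilde W_j$ and every $\phi_j\sin(2\pi\langle k_j,\tilde\theta\rangle)=-\{H_0,\tilde W_j\}$ depends only on $(\tilde\theta,s)$. Hence all mutual Poisson brackets among these functions vanish, the Lie series collapses after one term, and $H_0\circ\phi^1_{\tilde W_j}=H_0-\phi_j\sin(2\pi\langle k_j,\tilde\theta\rangle)$ \emph{exactly}. Setting $\Psi_n:=\phi^1_{\tilde W_2}\circ\cdots\circ\phi^1_{\tilde W_n}$ therefore gives $H_0\circ\Psi_n=H_n$ identically, and each $\phi^1_{\tilde W_j}$ is the explicit shear fixing $(\tilde\theta,s)$ and translating $\tilde r$ by $s^je^{\norm{k_j}s}k_j\sin(2\pi\langle k_j,\tilde\theta\rangle)$ and $\theta_d$ by $\partial_s\tilde W_j$. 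On $M^-$ the summands decay like $\abs{s}^je^{-\norm{k_j}\abs{s}}$ together with all derivatives, so $\Psi_n$ and $\Psi_n^{-1}$ converge uniformly on compacta to a real analytic symplectic diffeomorphism $\Psi:M^-\to M^-$; passing to the limit in $H_0\circ\Psi_n=H_n$ yields $H\circ\Psi=H_0$, proving $(i)$.

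For $(ii)$ I fix $(\theta_0,r_0)$ with $s_0>0$. Since $H$ is independent of $\theta_d$ and its $\tilde\theta$-derivatives are independent of $\tilde r$, the flow on the invariant slice $\{s=s_0\}$ is integrable by quadrature:
$$\tilde r(t)-\tilde r_0=\sum_{j\ge 2}s_0^je^{\norm{k_j}s_0}\,k_j\bigl[\sin(2\pi\langle k_j,\tilde\theta_0+\tilde\omega t\rangle)-\sin(2\pi\langle k_j,\tilde\theta_0\rangle)\bigr].$$
I pick $t_n\asymp\abs{\langle k_n,\tilde\omega\rangle}^{-1}$ so that the $n$-th summand attains its maximum amplitude $\simeq s_0^ne^{\norm{k_n}s_0}\norm{k_n}$, and control the rest by $\abs{\sin A-\sin B}\le\abs{A-B}$: for $j>n$ the summand is bounded by $2\pi s_0^j\norm{k_j}e^{(s_0-\beta_j)\norm{k_j}+\beta_n\norm{k_n}}$, which a sufficiently fast growth of $\norm{k_{n+1}}$ renders negligible, while the $j<n$ contributions form a fixed constant dominated by the $n$-th term for $n$ large. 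Hence $\abs{\tilde r(t_n)}\to\infty$. The BNF claim follows because every $\phi_j(s)=\cO(s^j)$ vanishes to order $s^j\ge s^2$ so $H-H_0=\cO(r^2)$, the formal BNF reduction is solvable order by order since $\langle\tilde\omega,k_j\rangle\ne 0$, and all perturbations have zero $\theta$-average; uniqueness of the BNF at the non-resonant torus $\cT_0$ then gives $N_H(r)=\langle\omega,r\rangle$.

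The main obstacle is the diffusion step: the series for $\tilde r(t)$ is only conditionally convergent (the amplitudes $s_0^je^{\norm{k_j}s_0}\norm{k_j}$ diverge in $j$), so there is no absolute-convergence safety net, and the pairs $(k_j,\beta_j)$ must be threaded so that for every $s_0>0$ and every large $n$ the $n$-th harmonic at time $t_n$ beats both the bounded sum of lower harmonics and the delicately cancelling higher harmonics.
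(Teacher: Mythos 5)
Your proposal is correct and follows the paper's construction in all essentials: the same commuting shears (the paper packages them as the generating functions $S_n$ of \eqref{eq_generating_function}, but the resulting $\Psi_n$ is exactly your composition of time-one maps), the same decay/growth dichotomy of $e^{\norm{k_j}s}$ across $s=0$ for part $(i)$, the same observation that $\phi_j=\cO(s^j)$ leaves only finitely many harmonics at each order of the formal conjugacy so that the BNF is $\langle\omega,r\rangle$, and the same resonance mechanism for part $(ii)$ (wait a time $\tau_n=\abs{\langle k_n,\tilde\omega\rangle}^{-1}$ so that the $n$-th harmonic contributes its full amplitude $s^n e^{\norm{k_n}s}\norm{k_n}$). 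The one place you genuinely diverge is the passage to the limit in the diffusion step, which you rightly single out as the delicate point but then attack the hard way. The paper instead verifies the escape property $(\mathcal{P}^5_n)$ for the flow of the \emph{truncated} Hamiltonian $H_n$ --- a finite sum, so there is no tail to control --- and transfers it to $H$ via Proposition \ref{prop.approximation}: the property is a finite-time open condition in the $C^3$ topology (Gronwall), and $\norm{H-H_n}_{C^3}\to 0$ as $k_{n+1}\to\infty$. This dissolves the ``conditionally convergent series'' obstacle you flag. If you do insist on working with the limit flow directly, two small repairs are needed. First, the lower harmonics $j<n$ do \emph{not} sum to a fixed constant (their amplitudes $s_0^j e^{\norm{k_j}s_0}\norm{k_j}$ grow with $j$); the sum up to $n-1$ is nonetheless beaten by the $n$-th term because $k_n$ is chosen after $k_2,\dots,k_{n-1}$, which is exactly the inequality appearing in the paper's proof of $(\mathcal{P}^5_n)$. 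Second, your tail bound silently replaces $t_n$ by $e^{\beta_n\norm{k_n}}$, whereas \eqref{eq_liouville_cond} only gives the one-sided bound $t_n\ge e^{\beta_n\norm{k_n}}$; the fix is to impose the growth condition on $\norm{k_{n+1}}$ in terms of the actual value of $t_n$, which is available once $k_n$ has been fixed. Neither point affects the validity of the scheme.
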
 

We also have

\begin{theorem} \label{th3bis} For any $\omega \in \R^d$, $d\geq 3$ satisfying \eqref{eq_liouville_cond}, there exists a sequence $\{k_j\}\subset \Z^{d-1}$ such that if $\phi_{j}(s)= \langle \tilde{\om}, k_j \rangle {s^j} \norm{k_j}^{- l -1} j^{-2}$ and $\omega(\cdot)\equiv \omega$
then the Hamiltonian in \eqref{eq_ham2} satisfies the conclusion of Theorem \ref{thm_teorema_Cr}.
\end{theorem}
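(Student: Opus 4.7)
The plan is to realize $\Psi$ as an infinite composition of explicit canonical transformations, one for each trigonometric term of \eqref{eq_ham2}, and to calibrate the sequence $\{k_j\}$ so that this composition lies precisely in $C^l$ but not in $C^{l+1}$.

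First I would choose $\{k_j\}\subset\Z^{d-1}$ as a sufficiently sparse subsequence of the Liouville sequence $\{\bar k_j\}$ from \eqref{eq_liouville_cond}, requiring that $\norm{k_j}^{1/j}\to\infty$ (so that $R^j/\norm{k_j}$ stays summable on any compact set, even after polynomial corrections in $j$) and that $|\langle\tilde{\om},k_j\rangle|\le e^{-(2\pi\rho_j+j)\norm{k_j}}$ for some $\rho_j\to\infty$; both requirements are compatible with \eqref{eq_liouville_cond} by subsequence extraction. With $\phi_j(s)=\langle\tilde{\om},k_j\rangle s^{j}\norm{k_j}^{-l-1}j^{-2}$, each term of \eqref{eq_ham2} has modulus on $D_{\Delta,\rho}$ bounded by $|\langle\tilde{\om},k_j\rangle|\Delta^{j}e^{2\pi\rho\norm{k_j}}\norm{k_j}^{-l-1}j^{-2}$, which is summable for every $\Delta,\rho>0$, so $H=\lim H_n\in C^{\om}$.

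Each $j$-th trigonometric term of $H_n$ is removed by the canonical change of variables $\Phi_{S_j}$ generated by $S_j(\theta,r')=\langle\theta,r'\rangle+\chi_j(\theta,r')$ with $\chi_j(\theta,r')=-(s')^{j}\cos(2\pi\langle k_j,\tilde{\theta}\rangle)/(2\pi\norm{k_j}^{l+1}j^{2})$; the small divisor $\langle\tilde{\om},k_j\rangle$ arising from the cohomological equation is exactly cancelled by the same factor built into $\phi_j$. Thus $\Psi_n:=\Phi_{S_2}\circ\cdots\circ\Phi_{S_n}$ satisfies $H_n\circ\Psi_n=H_0$ identically. On $|s|\le R$, any derivative of $\chi_j$ of total order $\le l$ has modulus at most $C_{l,R}\, R^{j} j^{l-2}/\norm{k_j}$ (the $l+1$ powers of $\norm{k_j}$ in the denominator dominate the at most $l$ powers brought down by $\tilde{\theta}$-differentiation), so the $C^l$ norms are summable; a Fa\`a di Bruno estimate for the composition then gives $\Psi_n\to\Psi$ in $C^l_{\mathrm{loc}}$, and $\Psi$ is a $C^l$ symplectic diffeomorphism of $\T^d\times\R^d$.

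To see that $\Psi\notin C^{l+1}$, I would compute $\partial_{\tilde{\theta}}^{l+2}\chi_j$ in the direction $k_j/\norm{k_j}$: this has modulus $(2\pi)^{l+1}R^{j}\norm{k_j}/j^{2}$, and since $\norm{k_j}/j^{2}\to\infty$ the sum over $j$ is divergent on any compact containing a point $s_*\ne 0$. At a generic $\tilde{\theta}_*$ the values $\sin(2\pi\langle k_j,\tilde{\theta}_*\rangle)$ are bounded below along a subsequence by Weyl equidistribution, so contributions of distinct Fourier modes cannot cancel; isolating the leading piece $\sum_{j\le n}\partial_{\tilde{\theta}}^{l+2}\chi_j$ in $\partial_{\tilde{\theta}}^{l+1}\Psi_n$ then shows $\|\Psi_n\|_{C^{l+1}}(\theta_*,s_*)\to\infty$ for any $s_*\ne 0$. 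For the BNF, the key observation is that $\phi_j(s)=O(s^{j})$: only finitely many $\chi_j$ contribute at any given order in $r$, so the formal series $\sum_j\chi_j$ is well defined in $\mathcal{C}^{\om}(\T^d_\rho)[[r]]$ and formally conjugates $H$ to $\langle\om,r\rangle$; uniqueness of the BNF at the non-resonant torus $\cT_0$ gives $N_H(r)=\langle\om,r\rangle$.

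The main obstacle is the compositional $C^l$ bound on $\Psi_n$: Fa\`a di Bruno expansions bring in cross products of lower-order derivatives of distinct $\chi_j$'s, and these have to be dominated by the individual summable bounds. The growth rate of $\norm{k_j}$ is doubly delicate — fast enough to make the $C^l$ cross terms uniformly summable on every compact, yet not so fast as to mask the divergence of $\sum_j R^j\norm{k_j}/j^{2}$ that drives the $C^{l+1}$ failure. A second subtle point is the rigorous non-cancellation of oscillations in $\partial_{\tilde{\theta}}^{l+1}\Psi_n$: one reduces it, via a telescoping argument, to the non-cancellation of a single Fourier series with unbounded coefficients at a carefully chosen point, which holds at almost every $\tilde{\theta}_*$.
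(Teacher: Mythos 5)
Your argument is correct and is, at bottom, the paper's own argument: the paper removes all the trigonometric terms at once with the single generating function $S_n(\Theta,r)=\langle\Theta,r\rangle-\frac{1}{2\pi}\sum_{2\le j\le n}s^j\norm{k_j}^{-l-1}j^{-2}\cos(2\pi\langle k_j,\tilde\Theta\rangle)$ (Proposition \ref{prop.conjugacy}, case $vi)$), which is exactly $\langle\Theta,r\rangle+\sum_j\chi_j$ in your notation, and the calibration $\norm{k_j}^{-l-1}j^{-2}$ plays the identical role. The one genuine difference is that you package $\Psi$ as an infinite composition $\Phi_{S_2}\circ\Phi_{S_3}\circ\cdots$ and then single out Fa\`a di Bruno cross terms as the main obstacle; that obstacle is vacuous here, because each $\Phi_{S_j}$ fixes $(\tilde\theta,s)$ and merely translates $(\tilde r,\theta_d)$ by functions of $(\tilde\theta,s)$, so these shears commute and their composition is literally the single transformation generated by $\langle\Theta,r\rangle+\sum_j\chi_j$: no cross terms arise, and $C^l$ convergence of $\Psi_n$ reduces to summability of $\norm{\partial\chi_j}_{C^l}$, which you already establish. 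For the failure of $C^{l+1}$ regularity, your appeal to Weyl equidistribution at a generic $\tilde\theta_*$ is weaker than what the structure gives you for free: since the $k_j$ are distinct frequencies, one simply reads off the Fourier coefficient of $\partial_{\tilde\theta}\sum_j\chi_j(\cdot,s_*)$ at $k_j$, which has size $|s_*|^j\norm{k_j}^{-l}j^{-2}$ and is not $o(\norm{k_j}^{-(l+1)})$ once $\norm{k_j}^{1/j}\to\infty$, contradicting $C^{l+1}$ directly with no genericity or cancellation discussion needed. The remaining ingredients (entire convergence of $H$ from the Liouville decay, and the BNF $\langle\omega,r\rangle$ from the $s^j$ factors plus uniqueness) coincide with Propositions \ref{prop.convergence} and \ref{prop.BNF}.
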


A simple modification of the construction in Theorem \ref{th3} gives a real entire Hamiltonian with a QP 
torus of Liouville frequency that is not accumulated by a positive measure set of KAM tori. 

\begin{theorem} \label{th4} For any $\omega \in \R^d$, $d\geq 3$ satisfying \eqref{eq_liouville_cond}, there exists a sequence $\{k_j\}\subset \Z^{d-1}$   such that if $\phi_{j}(s)= \langle \tilde{\om}, k_j \rangle {s^j} e^{\norm{k_j}{s^2}}$ and $\omega(\cdot)\equiv \omega$
then the Hamiltonian in \eqref{eq_ham2} satisfies the conclusion of Theorem \ref{thm_isolated}.
\end{theorem}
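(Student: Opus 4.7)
The plan is to run the same machinery as in the proof of Theorem \ref{th3}, the only structural change being that the weight $e^{\norm{k_j}s}$ is replaced by the \emph{even} weight $e^{\norm{k_j}s^2}$. First, I would write down the candidate symplectic conjugacy: with $\phi_j(s)=\langle\tilde\omega,k_j\rangle s^j e^{\norm{k_j}s^2}$ the natural formal generating function is
\[
f(\tilde\theta,r) \;=\; -\frac{1}{2\pi}\sum_{j\geq 2} r_d^j\, e^{\norm{k_j}\,r_d^2}\,\cos(2\pi\langle k_j,\tilde\theta\rangle).
\]
Since neither $H$ nor $f$ depends on $\theta_d$, we have $(r+\partial_\theta f)_d=r_d$, and a direct substitution gives $H(\theta,r+\partial_\theta f)=\langle\omega,r\rangle$. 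Expanding $e^{\norm{k_j}r_d^2}=\sum_m \norm{k_j}^m r_d^{2m}/m!$ shows that for each $p$ the coefficient of $r_d^p$ in $f$ is a \emph{finite} sum of trigonometric polynomials in $\tilde\theta$ (only the pairs $(j,m)$ with $j+2m=p$ and $j\geq 2$ contribute), so $f\in \mathcal{C}^{\omega}(\T^d_\rho)[[r]]\cap \mathcal{O}^2(r)$ for every $\rho>0$ and the BNF is $N_H(r)=\langle\omega,r\rangle$, as required.

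Next, I would select $\{k_j\}$ inductively from the Liouville sequence $\{\bar k_j\}$ of \eqref{eq_liouville_cond}. Given $k_2,\dots,k_{j-1}$, pick $k_j=\bar k_{n(j)}$ with $\norm{k_j}$ growing arbitrarily fast and with $|\langle\tilde\omega,k_j\rangle|\leq e^{-\alpha_j\norm{k_j}}$ for a sequence $\alpha_j\to\infty$ fixed below; this is possible since \eqref{eq_liouville_cond} allows arbitrarily strong super-exponential decay. For any prescribed widenings $\Delta,\rho>0$, choosing $\alpha_j$ large enough (depending only on $\Delta,\rho$) makes the $j$-th summand of $H$ decay faster than any exponential in $\norm{k_j}$ on $D_{\Delta,\rho}$, so $H_n\to H$ in $\mathcal{C}^\omega_{\Delta,\rho}$ for every $\Delta,\rho>0$ and $H$ is real entire.

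The core of the proof is the diffusion statement: for every $(\theta_0,r_0)$ with $s_0:=r_{0,d}\neq 0$ the orbit $\Phi^t_H(\theta_0,r_0)$ is unbounded. Because $H$ is independent of $\theta_d$, the coordinate $s=r_d$ is conserved and $\tilde\theta(t)=\tilde\theta_0+\tilde\omega t$ is rigid, so only $\tilde r$ can escape. The truncated flow integrates explicitly:
\[
\tilde r_i^{(n)}(t)-\tilde r_i(0) \;=\; \sum_{j=2}^{n} k_{j,i}\, s_0^j\, e^{\norm{k_j}\,s_0^2}\,\bigl[\sin(2\pi\langle k_j,\tilde\theta_0+\tilde\omega t\rangle)-\sin(2\pi\langle k_j,\tilde\theta_0\rangle)\bigr].
\]
At a suitable time $T_n\asymp 1/|\langle k_n,\tilde\omega\rangle|$ the $n$-th sine is close to a peak, so mode $n$ produces an excursion of order $|k_n|\,|s_0|^n e^{\norm{k_n}s_0^2}$. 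Because the weight $e^{\norm{k_j}s^2}$ is \emph{symmetric} in $s$, this excursion blows up as $n\to\infty$ for \emph{both} signs of $s_0$; this is precisely why the half-space integrability available in Theorem \ref{th3} collapses here. The inductive selection of $\{k_j\}$ must further ensure (i) that the contributions from modes $j<n$ at time $T_n$ are negligible compared with the $n$-th excursion, and (ii) that $\Phi^t_H$ stays within, say, unit distance of $\Phi^t_{H_n}$ on $[0,T_n]$, which is obtained by controlling $T_n\cdot \sup_{|s|\leq R}\sum_{j>n}|\phi_j(s)|$ via taking $|\langle\tilde\omega,k_{n+1}\rangle|$ tiny compared with $|\langle\tilde\omega,k_n\rangle|$.

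The hard part is the simultaneous quantitative bookkeeping of the three induction requirements—entirety of $H$, dominance of the $n$-th mode at time $T_n$ over all lower modes, and smallness of the tail contribution over the long interval $[0,T_n]$—uniformly in $s_0$ ranging over any compact set not containing $0$, so that the conclusion holds orbit-by-orbit for every $s_0\ne 0$. This is the same kind of delicate Liouville-driven induction performed in Theorems \ref{th03} and \ref{th3}, and under the cosmetic replacement $s\mapsto s^2$ in the exponential weight it adapts here without additional difficulty.
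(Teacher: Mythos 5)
Your proposal is correct and follows essentially the same route as the paper: the formal generating function yielding the BNF $\langle\omega,r\rangle$ (Proposition \ref{prop.BNF}), super-exponential decay of $\langle\tilde\omega,k_j\rangle$ for entirety (Proposition \ref{prop.convergence}), explicit integration of the truncated flow with the $n$-th near-resonant mode dominating at time $\sim\abs{\langle k_n,\tilde\omega\rangle}^{-1}$ and the evenness of $e^{\norm{k_j}s^2}$ giving two-sided escape (Proposition \ref{prop.diffusion}, case $vii)$ and property $(\mathcal{P}^6_n)$), and a finite-time openness/Gronwall argument to transfer the property to the limit $H$ (Proposition \ref{prop.approximation}). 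The bookkeeping you defer is exactly what the paper handles by stating the diffusion property at finite scales on the compact sets $Q_n$ and invoking openness in the $C^3$ topology.
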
 

\section{Proofs} 

For convenience of the presentation we summarize the choices made in the various constructions of Theorems \ref{th1}--\ref{th4}. Recall that $H_n$ are constructed as in \eqref{eq_ham2}, with 
$\{k_j\}\subset \Z^{d-1}$ a strictly increasing sequence and  the following possibilities for $\phi_j$ :
\begin{itemize}
 \item[$i)$] \bk $\om(\cdot) : \R \to \R^d$ is either $\hat{\om}$ or $\bar{\om}$ and $\phi_j(s)=s^j e^{-j\norm{k_j}}$,
 \item[$ii)$] \bk  $\om(\cdot) : \R \to \R^d$ is $\hat{\om}$ and $\phi_j(s)=s^j e^{-\frac{C}{2}\norm{k_j}}$, for some $C>0$,
 \item[$iii)$] \bk  $\om(\cdot) \equiv  \omega$ and $\phi_j(s)=s^j e^{-j\norm{k_j}}$,
 \item[$iv)$] \bk $\om(\cdot) \equiv  \omega$ and $\phi_j(s)=s^2 e^{-j\norm{k_j}}$,
 \item[$v)$] \bk  $\om(\cdot) \equiv  \omega$ and $\phi_j(s)=\langle \tilde{\omega}, k_j\rangle s^j e^{\norm{k_j}s}$,
 \item[$vi)$] \bk $\om(\cdot) \equiv \omega$ and $\phi_j(s)=\langle \tilde{\omega}, k_j\rangle s^j \norm{k_j}^{-l -1} j^{-2}$, 
 \item[$vii)$] \bk $\om(\cdot) \equiv \omega$ and $\phi_j(s)=\langle \tilde{\omega}, k_j\rangle s^j e^{\norm{k_j}s^2}$.
\end{itemize}

\medskip

Let us now explain how the sequences $\{k_j\}$ will be chosen in the different cases. 

For cases $iii)$--$vii)$,  $\{k_j\}$ will be a fast growing subsequence of the sequence $\{\bar{k}_j\}$ satisfying \eqref{eq_liouville_cond}.  
For cases $i)$ and $ii)$ we will use the following elementary fact. 
\medskip

\begin{lemma}
\label{lemma_sequences}
 For any $\omega \in \R^d, \; d\geq 3$, assume $\omega(\cdot)$ satisfies either \eqref{eqN} or \eqref{eqN2}. 
 There exists a sequence $\{s_j\}\subset \R$ and an increasing sequence in norm $\{k_j\}\subset \Z^{d-1}$ such that
 \begin{itemize}
   \item[$a)$] \bk $\lim_{j\to \infty}{\abs{s_j}}=0$,
   \item[$b)$] \bk $\lim_{j\to \infty}{\norm{k_j}}=\infty$,
   \item[{$c)$}] \bk $\langle \tilde{\omega}(s_j) , k_j \rangle=0$.
 \end{itemize}
In case  $\tilde{\om} \notin \Omega^{d-1}_{\tau}$ and if $\omega(\cdot)$ satisfies \eqref{eqN}, we can assume without loss of generality that 
\begin{equation}\label{eq_dir2}  \norm{k_j}<\abs{s_j}^{-(\tau+1)^{-1}}.\end{equation}
\end{lemma}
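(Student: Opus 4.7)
The plan is a two-step procedure: first construct the integer sequence $\{k_j\}\subset\Z^{d-1}$ along which $\langle\tilde{\omega},k_j\rangle$ is small (or zero) with $\|k_j\|\to\infty$, and then solve the one-variable equation $\langle\tilde{\omega}(s_j),k_j\rangle=0$ for $s_j$ near $0$. Since $\omega(\cdot)$ perturbs the components of $\tilde{\omega}$ only by polynomial corrections in $s$, this equation is linear in $s$ in case \eqref{eqN} and a polynomial of degree $d-1$ in case \eqref{eqN2}, in both cases with linear coefficient $k_{j,1}$ and constant term $\langle\tilde{\omega},k_j\rangle$.

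For the construction of $\{k_j\}$ I would separate cases according to the rational dependencies among $\omega_1,\dots,\omega_{d-1}$. If these numbers span a one-dimensional $\Q$-subspace of $\R$, pick any $k\in\Z^{d-1}\setminus\{0\}$ in the annihilator of $\tilde{\omega}$ and set $k_j=jk$, $s_j=0$. Otherwise the $\Z$-module $\Z\omega_1+\dots+\Z\omega_{d-1}$ is non-cyclic, hence dense in $\R$, so one can apply the one-dimensional Dirichlet theorem to $\omega_2/\omega_1$ (after a harmless relabeling if needed to ensure $\omega_1\neq 0$ and $\omega_2/\omega_1\notin\Q$) to produce infinitely many convergents $(p_j,q_j)$ with $|q_j\omega_2-p_j\omega_1|\leq|\omega_1|/q_j$ and $p_j,q_j\neq 0$; setting $k_j=(-p_j,q_j,0,\dots,0)$ then gives $|k_{j,1}|\asymp\|k_j\|\to\infty$ and $|\langle\tilde{\omega},k_j\rangle|=O(1/\|k_j\|)$. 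Under the Liouville hypothesis $\tilde{\omega}\notin\Omega^{d-1}_\tau$, the negation of the Diophantine condition directly furnishes a sequence with $|\langle\tilde{\omega},k_j\rangle|<\gamma_j/\|k_j\|^\tau$ for some $\gamma_j\to 0$; an easy descent on the dimension (restricting to the hyperplane $\{k_1=0\}$ in the event that only vectors there achieve the small pairing) allows us further to assume $k_{j,1}\neq 0$.

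With $\{k_j\}$ chosen, in case \eqref{eqN} the equation $\langle\tilde{\omega},k_j\rangle+s_jk_{j,1}=0$ is explicit: $s_j=-\langle\tilde{\omega},k_j\rangle/k_{j,1}\to 0$. In case \eqref{eqN2} one faces the polynomial $P_j(s):=\langle\tilde{\omega},k_j\rangle+\sum_{i=1}^{d-1}s^ik_{j,i}=0$, whose derivative at $s=0$ is $k_{j,1}\neq 0$ and whose constant term $P_j(0)$ is arbitrarily small. On a disc of radius of order $|k_{j,1}|/\|k_j\|$ around $0$ the higher-order remainder $\sum_{i\geq 2}s^ik_{j,i}$ is dominated by $|k_{j,1}s|/2$, so a contraction/Newton iteration seeded at the linear ansatz $s^{(0)}=-\langle\tilde{\omega},k_j\rangle/k_{j,1}$ converges to a root $s_j$ satisfying $|s_j|\leq 2|\langle\tilde{\omega},k_j\rangle|/|k_{j,1}|$, hence $s_j\to 0$. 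In both cases $\langle\tilde{\omega}(s_j),k_j\rangle=0$ by construction, so properties $a)$, $b)$, $c)$ hold.

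Finally, for \eqref{eq_dir2} in the Liouville case with \eqref{eqN}, the estimate $|s_j|\leq|\langle\tilde{\omega},k_j\rangle|/|k_{j,1}|\leq\gamma_j/(|k_{j,1}|\|k_j\|^\tau)\leq\gamma_j/\|k_j\|^\tau$ gives $\|k_j\|^{\tau+1}|s_j|\leq\|k_j\|\gamma_j$, which can be made less than $1$ by extracting a subsequence along which $\gamma_j$ decays quickly enough; rearranging yields $\|k_j\|<|s_j|^{-1/(\tau+1)}$ as required. The main obstacle is arranging $k_{j,1}\neq 0$ with a usable lower bound in the Liouville approximation step, which is handled by the dimensional descent noted above; the remaining steps, including the standard implicit function theorem argument for the polynomial case \eqref{eqN2}, amount to elementary bookkeeping on the relative sizes of $|k_{j,1}|$, $\|k_j\|$, and $|\langle\tilde{\omega},k_j\rangle|$.
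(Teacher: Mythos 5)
Your overall strategy is the same as the paper's: produce almost-resonant integer vectors by one-dimensional Dirichlet approximation in two chosen coordinates, then solve the one-variable equation $\langle \tilde{\omega},k_j\rangle+\sum_i s^i k_{j,i}=0$ for a small root $s_j$. The paper splits on whether $\omega'=(\omega_1,\omega_2)$ is resonant and supports $k_j$ on the first two coordinates only (so the polynomial is at worst quadratic and the Newton step is unnecessary); your treatment of the degree-$(d-1)$ polynomial and your derivation of \eqref{eq_dir2} from $\abs{s_j}\leq \abs{\langle\tilde{\omega},k_j\rangle}/\abs{k_{j,1}}$ are correct as far as they go.

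Two of your steps do not work as described, and both concern the same point: coordinate $1$ is distinguished in \eqref{eqN} and \eqref{eqN2}, since it is the one carrying the linear term $+s$, so you need $k_{j,1}\neq 0$ and in fact $\abs{k_{j,1}}$ comparable to $\norm{k_j}$. First, the ``harmless relabeling'' is not harmless: permuting $\omega_1$ with another coordinate changes which component of $\omega(\cdot)$ is perturbed, hence changes the statement being proved. (A permutation among coordinates $2,\dots,d-1$ is genuinely free, and when $\omega_1\neq 0$ and the $\Q$-span has dimension at least $2$ one can always choose the second member of the Dirichlet pair there; the case $\omega_1=0$ needs a separate, easy, fix such as appending $k_{j,1}=1$, which costs nothing in the pairing.) Second, the ``descent on the dimension'' for the Liouville case cannot deliver what you claim: restricting to the hyperplane $\{k_1=0\}$ produces only vectors with $k_{j,1}=0$, for which the equation $\langle\tilde{\omega},k_j\rangle+s k_{j,1}=0$ has no solution at all unless the pairing already vanishes. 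The paper resolves this by permuting the coordinates of $\omega$ once and for all at the level of Theorem \ref{thm_diff_general} (see the sentence preceding Theorem \ref{th2}), which is what licenses the ``without loss of generality $\abs{k_{i,1}}=\norm{k_i}$'' in its proof; some such global reduction is needed, and your local descent does not substitute for it.

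A further remark on your fully rationally dependent case: taking $k_j=jk$ in the annihilator with $s_j\equiv 0$ satisfies the literal statement of the lemma but trivializes it, since the downstream use (Proposition \ref{prop.diffusion}, case $i)$, where the drifting term is proportional to $s_n^n$) requires $s_j\neq 0$. The paper instead takes $k_j=(a_jm_1+1,a_jm_2,0,\dots,0)$ off the resonance, so that $\langle\tilde{\omega},k_j\rangle=\omega_1$ and the resulting root $s_j$ is small but nonzero.
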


\begin{proof}
Let us denote $\omega':=(\omega_1,\omega_2)$ (we only consider the two first components of $\omega$). We will divide the proof according to whether $\omega'$ is resonant or
 non-resonant. We will only treat the case where $\omega(\cdot)$ is as in \eqref{eqN2}, the case \eqref{eqN} being similar albeit easier.
 \begin{itemize}
   \item[a)] \bk Assume first that $\omega$ is such that $\omega'$ is non-resonant, $\om(\cdot)$ as in \eqref{eqN2}. 
 By Dirichlet's Theorem there exists $C>0$ and an increasing sequence in norm $\{k_i'\}\subset \Z^{2}$, 
   $k_i'=(k_{i,1}, k_{i,2})$ such that
  \begin{equation}
  \label{eq_ineq_sec1}
   \abs{\langle \om', k_i'\rangle}<\frac{C}{\norm{k_i'}}.
  \end{equation}
Consider  $k_i:=(k_{i,1}, k_{i,2}, 0, \ldots, 0)$. Now  $\langle \tilde{\omega}(s_i), k_i \rangle=0$ is equivalent to 
  \begin{equation}
    \label{eq_sec_deg1}
    k_{i,2}s_i^2 + k_{i,1}s_i + \langle {\omega'}, k'_i \rangle=0,
  \end{equation}
which is easily seen to have a solution $s_i \to 0$ as required. 
 
 \bigskip
 
  \item[b)] \bk Assume now that $\omega$ is such that $\omega'$ is resonant, $\om(\cdot)$ as in \eqref{eqN2}.
  There exists $m=(m_1,m_2)$ such that $\langle m , \omega' \rangle=0$.
  Then for an increasing sequence $\{a_i\}\subset \N$ we define
  \begin{equation*}
    k_i:=(a_i m_1+1, a_i m_2, 0, \ldots, 0)\in \Z^{d-1}.
  \end{equation*}
  The equation $\langle \tilde{\omega}(s_i), k_i\rangle=0$ is  then equivalent to 
  \begin{equation*}
     s_i k_{i,1}+s_i^2 k_{i,2}=-\langle \tilde{\omega},k_i \rangle=-\omega_1,
  \end{equation*}
which clearly has a solution $s_i \to 0$ as required. 
\end{itemize}

In the non-resonant case, and  $\om(\cdot)$ as in \eqref{eqN}, equation \eqref{eq_sec_deg1} becomes $k_{i,1}s_i + \langle {\omega'}, k'_i \rangle=0$, solved by $s_i :=-\langle \tilde{\omega}, k_i\rangle /k_{i,1}$. If $\tilde \omega \notin \Omega_{\tau}^{d-1}$, we can assume without loss of generality that $ \abs{\langle \tilde{\omega}, k_i\rangle } < {\norm{k_i}^{-\tau}}$ and $\abs{k_{i,1}}=\norm{k_i}$. Thus \eqref{eq_dir2} holds.
 
\end{proof} \bk

\subsection{Convergence}  The following settles the convergence question in Theorems \ref{th1}--\ref{th4}.  
\begin{proposition}  \label{prop.convergence} In cases $i)$, $iii)$--$vii)$ the convergence $H_n \to H$ holds in the $C^\omega_{\rho}$ topology for any $\rho>0$, hence the limit $H$ is real entire. In case $ii)$, the convergence holds in $C^\omega_{\bar \rho}$ for $\bar \rho=\frac{C}{8\pi d}$, hence the limit $H \in C^\omega_{\bar \rho}$.
\end{proposition}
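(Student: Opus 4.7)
The plan is to estimate, for every $\Delta, \rho > 0$, the $C^\omega_{\Delta,\rho}$ norm of each summand $g_j(\theta,r) := \phi_j(s)\sin(2\pi \langle k_j,\tilde{\theta}\rangle)$ and to show that $\sum_j \norm{g_j}_{\Delta,\rho}<\infty$; then by completeness of $C^\omega_{\Delta,\rho}$ the sequence $H_n$ converges to $H\in C^\omega_{\Delta,\rho}$ in that norm, and the claimed global regularity follows from the description of these Fr\'echet spaces in Section~\ref{sec.notations}. Two bounds will be used throughout: from $|\sin z| \leq e^{|\mathrm{Im}\, z|}$ one gets
\[
\norm{\sin(2\pi\langle k_j,\tilde{\theta}\rangle)}_{\T^d_\rho} \leq e^{2\pi(d-1)\rho\norm{k_j}},
\]
while on $B_{\Delta,\rho}$ the last coordinate satisfies $|s|\leq \Delta + \rho$. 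The task then reduces, case by case, to checking that the decay built into $\phi_j$ dominates the exponential blow-up produced by the complexified sine.

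In cases $i)$ and $iii)$, where $\phi_j(s)=s^j e^{-j\norm{k_j}}$, we obtain
\[
\norm{g_j}_{\Delta,\rho} \leq (\Delta+\rho)^j \exp\bigl(\norm{k_j}(2\pi d \rho - j)\bigr),
\]
and as soon as $j > 4\pi d \rho$ this is at most $(\Delta+\rho)^j e^{-j\norm{k_j}/2}$; so provided $\{k_j\}$ is chosen to grow rapidly enough in norm (for instance $\norm{k_j}\geq j$), the series converges geometrically for every $\Delta, \rho > 0$, yielding $H\in C^\omega$. Case $iv)$ is identical after replacing $s^j$ by $s^2$. Case $ii)$ is calibrated so that with $\bar\rho = C/(8\pi d)$ the combined exponent is strictly negative:
\[
\norm{g_j}_{\Delta,\bar\rho} \leq (\Delta+\bar\rho)^j e^{-C\norm{k_j}/4},
\]
which is summable for every $\Delta$ once $\norm{k_j}$ grows fast enough, giving $H\in C^\omega_{\bar\rho}$.

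The Liouville cases $v), vi), vii)$ rely on \eqref{eq_liouville_cond}: by extracting $\{k_j\}$ as a sufficiently sparse subsequence of $\{\bar k_j\}$, one can arrange $|\langle \tilde{\omega}, k_j\rangle| \leq e^{-M_j \norm{k_j}}$ with $M_j \to \infty$ as fast as we please. This smallness absorbs the dangerous factors: $e^{\norm{k_j}s}$ (bounded by $e^{\norm{k_j}(\Delta+\rho)}$ on $B_{\Delta,\rho}$) in case $v)$, $e^{\norm{k_j}s^2}$ in case $vii)$, and the merely polynomial decay $\norm{k_j}^{-l-1}$ combined with the sine blow-up in case $vi)$. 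For instance in case $v)$,
\[
\norm{g_j}_{\Delta,\rho} \leq \norm{\tilde{\omega}}_1\, \norm{k_j}\,(\Delta+\rho)^j \exp\bigl(\norm{k_j}(\Delta+\rho+2\pi d\rho - M_j)\bigr),
\]
which is summable as soon as $M_j \geq \Delta+\rho+2\pi d\rho + 1$ eventually. A standard diagonal argument over an exhausting sequence $(\Delta_m,\rho_m)$ then produces a single $\{k_j\}$ that works simultaneously for every $\Delta,\rho > 0$, hence $H\in C^\omega$. The main (and essentially only) obstacle is bookkeeping: one must verify that this growth requirement on $\norm{k_j}$ is compatible with the other arithmetic conditions placed on $\{k_j\}$ in the proofs of Theorems \ref{th1}--\ref{th4}; since those merely ask $\{k_j\}$ to be a \emph{sufficiently} sparse subsequence, no conflict arises and the extraction can be performed once and for all.
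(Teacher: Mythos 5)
Your proposal is correct and follows essentially the same route as the paper: term-by-term sup-norm estimates on $D_{\Delta,\rho}$, the bound $\abs{\sin z}\leq e^{\abs{\im z}}$ for the complexified sine, and the observation that the decay built into $\phi_j$ (the factors $e^{-j\norm{k_j}}$, $e^{-\frac{C}{2}\norm{k_j}}$, or the Liouville smallness of $\langle\tilde\omega,k_j\rangle$) dominates, giving a Cauchy sequence in each $C^\omega_{\Delta,\rho}$. The only superfluous step is the closing diagonal argument in cases $v)$--$vii)$: since the single condition $M_j\to\infty$ already makes the tail summable for every fixed pair $(\Delta,\rho)$, one sequence $\{k_j\}$ works for all domains simultaneously without any further extraction.
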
 

\begin{proof}
\noindent {\it Cases $i)$--$iv)$.}  We treat the case $i)$, the other cases being similar. According to \eqref{eq_ham2}, we have that  for any $\Delta, \rho>0$, there exists $N\in \N$ such that for all $m>n \geq N$
\begin{align*}
\norm{H_m-H_n}_{\Delta,\rho}&\leq
\sum_{j=n+1}^{m}(\Delta+\rho)^je^{-j\norm{k_j}}{\norm{\sin(2\pi \langle k_j, \tilde{\theta} \rangle)}_{\rho}}\\
&<  \sum_{j=N}^{\infty}{(\Delta+\rho)^j e^{-\norm{k_j}(j-2\pi d \rho)}}.
\end{align*}
Therefore $\{ H_n\}$ is a Cauchy sequence in $C^{\omega}_{\Delta,\rho}$. Since $\Delta,\rho>0$ are arbitrary, 
the limit $H$ is a real entire function. 
\medskip 

\noindent {\it Cases $v)$--$vii)$.}  We treat case $v)$, the other cases being similar. From condition \eqref{eq_liouville_cond}, there exists a sequence $u_j \to \infty$ such that 
$$\ln \abs{\langle \tilde{\omega}, k_j \rangle} \leq -u_j{\norm{k_j}}.$$
For any $\Delta, \rho>0$, for all $ \; \varepsilon >0$ there exists 
$N\in \N$ such that for all $m>n\geq N$ 
 \begin{equation*}
 \label{eq_real_entire}
 \norm{H_m - H_n}_{\Delta ,\rho}
 \leq  \sum_{j=n+1}^{m}{(\Delta+\rho)^je^{\norm{k_j}(\Delta+(2\pi d+1)\rho-u_j)}}<\varepsilon.
 \end{equation*}
 Therefore $\{ H_n\}$ is a Cauchy sequence in $C^{\omega}_{\Delta,\rho}$. Since $\Delta,\rho>0$ are arbitrary, 
the limit $H$ is a real entire function. 
\end{proof} \bk

 \subsection{Birkhoff normal forms} \label{sec.bnf}

\begin{proposition} \label{prop.BNF} In Theorems \ref{th1}--\ref{th4}, and except for Theorem \ref{th03} b), the BNF at $\cT_0$ is defined and equals $\langle \omega(r_d), r \rangle$. 
\end{proposition}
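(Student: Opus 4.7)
The plan is to write down explicitly the formal generating function realizing the reduction to BNF, exploiting the special structure of \eqref{eq_ham2}: the perturbation
$$V(\theta,r_d) = -\sum_{j\geq 2} \phi_j(r_d)\sin(2\pi\langle k_j,\tilde\theta\rangle)$$
depends on $\tilde\theta$ and $r_d$ alone, and involves neither $r_1,\ldots,r_{d-1}$ nor $\theta_d$. I will therefore search for $f$ of the form $f(\theta,r)=g(\tilde\theta,r_d)$, so that $\partial_{\theta_d}g\equiv 0$ and the substitution $r\mapsto r+\partial_\theta f$ preserves $r_d$ and only shifts $\tilde r$. The identity $H(\theta,r+\partial_\theta f)=\langle\omega(r_d),r\rangle$ then reduces to the scalar cohomological equation
$$\langle\tilde\omega(r_d),\partial_{\tilde\theta} g(\tilde\theta,r_d)\rangle = \sum_{j\geq 2}\phi_j(r_d)\sin(2\pi\langle k_j,\tilde\theta\rangle),$$
which I solve mode-by-mode with the ansatz $g=\sum_{j\geq 2}g_j$ where
$$g_j(\tilde\theta,s) := -\frac{\phi_j(s)}{2\pi\,\langle\tilde\omega(s),k_j\rangle}\cos(2\pi\langle k_j,\tilde\theta\rangle).$$

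The second step is the case-by-case verification that $g$ is a genuine element of $C^{\omega}(\T^d_\rho)[[r]]\cap\mathcal O^2(r)$ for some (in fact every) $\rho>0$. In cases $v)$, $vi)$, $vii)$, the factor $\langle\tilde\omega,k_j\rangle$ hard-coded into $\phi_j$ cancels the small denominator, leaving $g_j(\tilde\theta,s)=-\frac{1}{2\pi}s^j h_j(s)\cos(2\pi\langle k_j,\tilde\theta\rangle)$ with $h_j$ an entire function of $s$. In case $iii)$, only $j=N$ contributes to $[s^N]g$, so that coefficient is the single real-analytic function $g_N$. In cases $i)$, $ii)$, where $\tilde\omega(\cdot)$ is affine or polynomial in $s$, I choose the sequence $\{k_j\}$ from Lemma \ref{lemma_sequences} (possibly after permuting coordinates or discarding finitely many indices) so that $\langle\tilde\omega,k_j\rangle\neq 0$ for every $j$, and then expand $1/\langle\tilde\omega(s),k_j\rangle$ as a Taylor series convergent around $s=0$; combined with the factor $\phi_j(s)=s^j(\cdots)$, each $g_j$ becomes a formal power series in $s$ of order $\geq j\geq 2$. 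In all six admitted situations $[s^N]g$ is thus a finite sum over $j\leq N$ of analytic functions of $\tilde\theta$, which lies in $C^{\omega}(\T^{d-1}_\rho)$ for every $\rho>0$; together with $g_j\in\mathcal O^j(s)\subset\mathcal O^2(r)$, this gives $g\in C^{\omega}(\T^d_\rho)[[r]]\cap\mathcal O^2(r)$. The termwise identity $\langle\tilde\omega(r_d),\partial_{\tilde\theta}g_j\rangle=\phi_j(r_d)\sin(2\pi\langle k_j,\tilde\theta\rangle)$ is then immediate by construction, so $H(\theta,r+\partial_\theta g)=\langle\omega(r_d),r\rangle$; uniqueness of the BNF for non-resonant $\omega$ finally identifies $N_H(r)=\langle\omega(r_d),r\rangle$.

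The main obstacle, and precisely the reason case $iv)$ (Theorem \ref{th03} b)) is excluded from the statement, is the potential accumulation of small denominators at a single order of $s$. In case $iv)$, every $\phi_j=s^2 e^{-j\norm{k_j}}$ lives at the same power $s^2$, so $[s^2]g$ degenerates into the infinite series $\sum_{j\geq 2}\frac{e^{-j\norm{k_j}}}{-2\pi\langle\tilde\omega,k_j\rangle}\cos(2\pi\langle k_j,\tilde\theta\rangle)$. By the Liouville hypothesis \eqref{eq_liouville_cond} one has $|\langle\tilde\omega,k_j\rangle|^{-1}\geq e^{u_j\norm{k_j}}$ with $u_j\to\infty$, which forces this series to diverge in every $C^{\omega}_\rho$-norm, so the coefficient $[s^2]g$ is not analytic in $\tilde\theta$ and the BNF fails to exist as a formal power series with analytic coefficients. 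In the six admitted cases either the $\langle\tilde\omega,k_j\rangle$-factor explicitly built into $\phi_j$ cancels the small denominator, or the varying powers $s^j$ in $\phi_j$ distribute the contributions across distinct orders of $s$, and this is what prevents the catastrophic accumulation.
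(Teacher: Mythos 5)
Your proposal is correct and follows essentially the same route as the paper: your mode-by-mode solution $g_j=-\frac{\phi_j(s)}{2\pi\langle\tilde\omega(s),k_j\rangle}\cos(2\pi\langle k_j,\tilde\theta\rangle)$ of the cohomological equation is exactly the (formal limit of the) generating function \eqref{eq_generating_function} used in the paper, and your case-by-case check that $[s^p]g$ is a trigonometric polynomial (finitely many $j\leq p$ contribute, either because the small denominator is cancelled by the factor $\langle\tilde\omega,k_j\rangle$ built into $\phi_j$ or because $1/\langle\tilde\omega(s),k_j\rangle$ is Taylor-expanded against the increasing powers $s^j$) reproduces the paper's computation of the coefficients of $S_\infty$. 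Your closing discussion of why case $iv)$ must be excluded matches the paper's footnote and Remark \ref{remark.explain.bnf}.
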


\begin{proof}
Define  $\Psi_n$ to be the canonical transformations obtained via the generating functions
\begin{equation}
\label{eq_generating_function}
 S_{n}(\Theta, r)= \langle \Theta, r \rangle- 
 \frac{1}{2\pi}\sum_{j=2}^n{ \langle \tilde{\omega}(s), k_j \rangle^{-1}\phi_j(s)  \cos(2\pi \langle k_j, \tilde{\Theta} \rangle)}, \;
\end{equation}
which is a real analytic function near the origin. More explicitly for all $n\in \N$ we obtain the change of variables $(\Theta_n, R_n)=\Psi_{n}(\theta, r)$ given by the equations
\begin{equation*}
 \begin{array}{l}
  \tilde{R}_n=\cfrac{\partial S_n(\Theta_n, r)}{\partial \tilde{\Theta}_n}=\tilde{r}+
  \sum_{j=2}^{n}{k_j \langle \tilde{\omega}(s), k_j \rangle^{-1} \phi_j(s) \sin(2\pi \langle k_j, \tilde{\Theta}_n \rangle)},\\
  \\
  R_{d,n}=\cfrac{\partial S_n(\Theta_n, r)}{\partial \Theta_{d,n}}=s,\\
  \\
  \tilde{\theta}= \cfrac{\partial S_n(\Theta_n, r)}{\partial \tilde{r}}= \tilde{\Theta}_n,\\
  \\
  \theta_d=\cfrac{\partial S_n(\Theta_n, r)}{\partial s}=\Theta_{d,n}-
  \cfrac{1}{2\pi}\sum_{j=2}^{n} \partial_s\left(\langle \tilde{\omega}(s), k_j \rangle^{-1}\phi_j(s)\right)\cos(2\pi \langle k_j, \tilde{\Theta}_n \rangle).
 \end{array}
\end{equation*} 
Thus
$$H_n= H_0 \circ \Psi_n,$$
where $H_0= \langle \omega(r_d), r \rangle$.
In fact, we can define in a formal way 
 \begin{equation*}
\label{eq_generating_function_infty}
 S_{\infty}(\Theta, r)= \langle \Theta, r \rangle- 
 \frac{1}{2\pi}\sum_{j=2}^\infty{ \langle \tilde{\omega}(s), k_j \rangle^{-1}\phi_j(s)  \cos(2\pi \langle k_j, \tilde{\Theta} \rangle)}, \;
\end{equation*}
which formally conjugates the limit Hamiltonian $H$ to $H_0$. We only need to verify that 
$$f=  \frac{1}{2\pi} \sum_{j=2}^\infty{ \langle \tilde{\omega}(s), k_j \rangle^{-1}\phi_j(s)  \cos(2\pi \langle k_j, \tilde{\Theta} \rangle)} \in \mathcal C^{\omega}(\T^d_\rho)[[r]]\cap \cO(r^2).$$
When $\phi_j(s)=c_j{s^j}$ as in Theorems \ref{th1}, \ref{th2}, and $\omega(\cdot)=\hat{\omega}(\cdot)$, the coefficient of ${s^p}$ in the power series of $f$ is the trigonometric polynomial\footnote{ This is exactly where the increasing powers $s^j$ in the definition of $\phi_j$ play a decisive role in controlling the Birkhoff Normal Form at $\cT_0$.} given by 
$$ \frac{1}{2\pi} \sum_{\overset{l\geq 0, j\geq 2}{l+j=p}}\frac{c_j}{\langle k_j,\tilde{\omega}\rangle} \left(-\frac{k_{j,1}}{\langle k_j,\tilde{\omega}\rangle}\right)^{l}\cos(2\pi \langle k_j, \tilde{\Theta} \rangle).$$

In the other situations, for example  when $\omega(\cdot)\equiv\omega$ and $\phi_j(s)=\langle \tilde{\om}, k_j \rangle  {s^j}e^{\norm{k_j}s}$ as in Theorem \ref{th3}, 
then the coefficient of ${s^p}$ in the formal power series of $S_{\infty}(\Theta, r)$ is the trigonometric polynomial given by 
$$ \frac{1}{2\pi}\sum_{\overset{l\geq 0, j\geq 2}{l+j=p}} \norm{k_j}^l\cos(2\pi \langle k_{j}, \tilde{\Theta} \rangle).$$
The other cases are similar. \end{proof} \bk 
We now consider the cases of Theorems \ref{th3} and \ref{th3bis} where the conjugacies to the degenerate BNF $\langle \omega,r \rangle$ do converge.

\begin{proposition}  $ \ $ 
\label{prop.conjugacy}
 In case { $v)$}, the map $\Psi=\lim \Psi_{n}$ with $\Psi_ n$ as in \eqref{eq_generating_function}
 is well defined on $M^-=\T^d \times \R^{d-1}\times \R^{-}$ and is a real analytic symplectic diffeomorphism from $M^-$ to itself. 

In case {$vi)$}, the map $\Psi=\lim \Psi_{n}$ with $\Psi_ n$ as in \eqref{eq_generating_function}
 is well defined on $M=\T^d \times \R^{d}$ and is a  diffeomorphism from $M$ to itself that is of class $C^{l}$ but not of class $C^{l+1}$. 
\end{proposition}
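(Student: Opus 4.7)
I would begin by writing $\Psi_n$ out explicitly from its generating function. Because $S_n(\Theta,r)$ in \eqref{eq_generating_function} does not depend on $\Theta_d$ and depends on $\tilde\Theta$ only through $\cos(2\pi\langle k_j,\tilde\Theta\rangle)$, the implicit relations $\theta=\partial_r S_n$, $R=\partial_\Theta S_n$ decouple immediately into $\tilde\Theta=\tilde\theta$, $R_d=r_d=:s$, together with
\begin{align*}
\tilde R-\tilde r&=\sum_j k_j\,\langle\tilde\omega,k_j\rangle^{-1}\phi_j(s)\sin(2\pi\langle k_j,\tilde\theta\rangle),\\
\Theta_d-\theta_d&=\frac{1}{2\pi}\sum_j\partial_s\!\bigl(\langle\tilde\omega,k_j\rangle^{-1}\phi_j(s)\bigr)\cos(2\pi\langle k_j,\tilde\theta\rangle).
\end{align*}
Thus $\Psi$ is the identity plus a shift of $(\tilde R,\Theta_d)$ depending only on $(\tilde\theta,s)$, so on any domain where these series converge its bijectivity is automatic and its inverse is given by the same formulas with the shift reversed.

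For case $v)$, $\langle\tilde\omega,k_j\rangle^{-1}\phi_j(s)=s^j e^{\norm{k_j}s}$. On $M^-$, $s<0$, so $e^{\norm{k_j}s}$ decays super-exponentially as $\norm{k_j}\to\infty$. Taking $\{k_j\}$ as a sufficiently rapidly growing subsequence of $\{\bar k_j\}$, the above series and all their derivatives in $(\tilde\theta,s)$ of any order converge absolutely and uniformly on every compact subset of $M^-$; extending $s$ to a complex half-neighborhood $\{\Re s<-\delta\}$ preserves this decay and gives real analyticity of $\Psi$ on $M^-$. Symplecticity is inherited: each $\Psi_n$ is canonical (as the transformation generated by $S_n$), and $C^1$-convergence $\Psi_n\to\Psi$ on compacta forces the symplectic identity $\Psi^\ast(d\theta\wedge dr)=d\Theta\wedge dR$ to pass to the limit.

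For case $vi)$, $\langle\tilde\omega,k_j\rangle^{-1}\phi_j(s)=s^j\norm{k_j}^{-l-1}j^{-2}$. The decay in $\norm{k_j}$ is now merely polynomial, but the exponent $l+1$ is tailored so that, for $\{k_j\}$ chosen with sufficiently fast growth, $p$ derivatives in $\tilde\theta$ (each contributing at most a factor $\norm{k_j}^p$) give terms with a remaining factor $\norm{k_j}^{p-l-1}$, which together with the $j^{-2}$ and the polynomial weight in $s$ yields absolute summability on compacta of $M$ whenever $p\le l$. This produces $\Psi\in C^l(M,M)$; bijectivity and symplecticity are obtained exactly as in case $v)$. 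Non-$C^{l+1}$ regularity is shown by evaluating formally the $(l+1)$-th $\tilde\theta$-derivative of the $\tilde R$-component at a point of $M$ where the trigonometric arguments align constructively: the $j$-th term now has size of order $\norm{k_j}\cdot j^{-2}\cdot (\text{polynomial in }s)$, which diverges because of the super-exponential growth of $\norm{k_j}$, producing a discontinuity of $\partial^{l+1}_{\tilde\theta}\Psi$.

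The main obstacle is the delicate fine tuning of $\{k_j\}$ in case $vi)$: the growth rate must be quick enough to make the borderline $l$-th derivatives summable on $M$ while leaving the $(l+1)$-th ones genuinely divergent at some point. A secondary technical point is verifying symplecticity in the merely $C^l$ regime, for which I would use that the canonical relation is a $C^1$-closed condition satisfied exactly by the approximants $\Psi_n$.
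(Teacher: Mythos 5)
Your proposal is correct and follows essentially the same route as the paper: write $\Psi_n$ explicitly from the generating function, observe that $\tilde\Theta=\tilde\theta$ and $R_d=s$ are preserved so the map is the identity plus a shift determined by $(\tilde\theta,s)$, and then read off convergence from the decay of $s^je^{\norm{k_j}s}$ on $\{s<-\rho\}$ in case $v)$ and of $\norm{k_j}^{-l-1}j^{-2}$ against $\tilde\theta$-derivatives in case $vi)$. The paper's own proof is considerably terser (it dismisses case $vi)$ as ``clear''), so your explicit bookkeeping of the derivative counts, the bijectivity via the explicit inverse, and the passage of symplecticity to the $C^1$ limit usefully fills in exactly the details the paper leaves implicit.
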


\begin{proof}  We start with case {$v)$}. From the definition \eqref{eq_generating_function} we have that $\Psi_n$ is generated by 
\begin{equation*}
 S_{n}(\Theta, r)= \langle \Theta, r \rangle- 
 \frac{1}{2\pi}\sum_{2\leq j\leq n}   s^j e^{\norm{k_j}s}  \cos(2\pi \langle k_j, \tilde{\Theta} \rangle), \;
\end{equation*}
that preserves for every $\rho>0$ the domain $M^-_\rho=\T^d \times \R^{d-1}\times (-\infty,-\rho)$. Moreover, $S_n$ converges in $C^\omega_{\frac{\rho}{10d}}$ on  $M^-_\rho$.
Hence $\Psi_n$ defines a real analytic symplectic diffeomorphism on every  $M^-_\rho$, $\rho>0$ (we assume $k_2$ is sufficiently large and $\{k_j\}$ is fast growing). 

We treat now case { $vi)$}. In this case $\Psi_n$ is generated by 
\begin{equation*}
 S_{n}(\Theta, r)= \langle \Theta, r \rangle- 
 \frac{1}{2\pi}\sum_{2\leq j\leq n}   s^j {\norm{k_j}}^{-l-1} j^{-2} \cos(2\pi \langle k_j, \tilde{\Theta} \rangle), \;
\end{equation*}
and it is clear that the limit  $\Psi=\lim \Psi_{n}$ is a diffeomorphism of $M$ of class $C^{l}$ but not of class $C^{{l}+1}$. 
\end{proof} \bk

\begin{remark} In principle, it should be possible to use our constructions to obtain examples that are Lyapunov stable but not KAM stable. A possible approach would be to replace the choice of $\phi_j$ in  { $vi)$} by $\phi_j(s)=\langle \tilde{\omega}, k_j\rangle s^j b_j$, with $|b_j|\leq 1$ chosen such that the resulting $\Psi_{n}$ forms a sequence of diffeomorphisms of $M$ such that 
$|\pi_2(\Psi_{n}(\theta,r))|\leq 10|r|$ for all $n$ while  $\Psi_{n}$ diverges in the $C^0$ topology in a way that guarantees the absence of  invariant tori besides the ones at $s=0$.  
\end{remark}

 \subsection{Fast approximations}
Let us denote by $\Phi_n^t(\cdot)$ the flow of $H_n$. It is clear that by choosing $\{k_n\}$ to grow sufficiently fast, one can guarantee that the flow of $H$ will be very close to the flow of $H_n$ during very long times. Thus, it is convenient to give finite time versions of all the properties required in Theorems \ref{th1}--\ref{th4} that we start by checking for the flow  $\Phi_n^t(\cdot)$. For fixed $C,\tau>0$, let us define the following conditions:

\begin{description}
\item[${(\mathcal{P}^1_{n})}$] There exists $z\in \T^d \times \R^d$ with $\norm{z}\leq \frac{1}{n}$ and $t>0$ s.t $\norm{\Phi^{t}(z)}>n$.

\item[${(\mathcal{P}^2_{n})}$]  There exists $z\in \T^d \times \R^d$ with $\norm{z}\leq \frac{1}{n}$ and $t \leq \exp(C {\norm{z}}^{-(\tau+1)^{-1}})$ satisfying $\norm{\Phi^{t}(z)}>{\norm{z}}^{-1}$.

\item[${(\mathcal{P}^3_{n})}$]  There exists $z\in \T^d \times \R^d$ with $r_n:=\norm{z}\leq \frac{1}{n}$ and $t \leq {r_n^{-2n}}$ satisfying $\norm{\Phi^{t}(z)}>{\norm{z}}^{-1}$.

\item[${(\mathcal{P}^4_{n})}$]  There exists $z\in \T^d \times \R^d$ with $r_n:=\norm{z}\leq \frac{1}{n}$ and $t \leq {r_n^{-4}}$ satisfying $\norm{\Phi^{t}(z)}>{\norm{z}}^{-1}$.

 \item[${(\mathcal{P}^5_{n})}$] For all $z \in Q^+_n$ there exists $t>0$ s.t  $\norm{\Phi^{t}(z)}>n$, where $Q^+_n:= \T^d \times [-n,n]^{d-1} \times \left[n^{-1},n \right]$.

  \item[${(\mathcal{P}^6_{n})}$] For all $z \in Q_n$ there exists $t>0$ s.t  $\norm{\Phi^{t}(z)}>n$, where $Q_n:= \T^d \times [-n,n]^{d-1} \times (\left[-n,-n^{-1} \right] \cup \left[n^{-1},n \right])$.

\end{description}

We will write the previous conditions with $n=\infty$ to indicate that they hold for all $n$ large enough.
All the proofs of Theorems \ref{th1}--\ref{th4} rely on the following Lemma.

\begin{proposition}
\label{prop.approximation}  For any $i\in \{1,2,3,4,5,6\}$,  if $k_2,\ldots,k_n$ are chosen and ${(\mathcal{P}^i_{n})}$ is satisfied by the flow of $H_n$, then if $k_{n+1}$ is chosen sufficiently large the flow of $H$ also satisfies ${(\mathcal{P}^i_n)}$.
\end{proposition}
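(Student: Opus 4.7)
The plan is to treat all six cases by a single finite-time approximation argument: each $(\mathcal{P}^i_n)$ reduces, once $k_2,\ldots,k_n$ are fixed, to a claim about the flow of $H_n$ over a bounded time interval and on a bounded region; choosing $\norm{k_{n+1}}$ large enough will then make $H-H_n$ so small on this region that the escape behaviour of $\Phi_n^t$ persists for the true flow $\Phi_H^t$ by continuous dependence.

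First I would make the finite-time, bounded-domain reduction explicit. For $(\mathcal{P}^1_n)$--$(\mathcal{P}^4_n)$ this is automatic: each concerns a single initial point $z$ and a single time $t_n$, so the orbit segment $\{\Phi_n^u(z) : 0\leq u\leq t_n\}$ is contained in some ball of radius $R_n$. For $(\mathcal{P}^5_n)$ and $(\mathcal{P}^6_n)$ I would note that the escape time $\tau_n(z)$ to the sphere $\norm{\cdot}=n$ is upper semi-continuous wherever it is finite, so compactness of $Q_n^+$ (respectively $Q_n$) delivers a uniform finite upper bound $T_n$ and again the union of relevant orbit segments lies in a fixed ball of radius $R_n$.

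Next I would quantify the closeness of $H$ to $H_n$ on the complex domain $D_{R_n,\rho_0}$ for some fixed $\rho_0>0$. The convergence estimates in the proof of Proposition \ref{prop.convergence} show that each additional term $\phi_j(s)\sin(2\pi\langle k_j,\tilde\theta\rangle)$ is bounded on $D_{R_n,\rho_0}$ by a quantity that decays exponentially in $\norm{k_j}$, via the factor $e^{-j\norm{k_j}}$ or via $\abs{\langle\tilde\omega,k_j\rangle}$ combined with \eqref{eq_liouville_cond}. Provided the subsequent $\norm{k_j}$ satisfy the mild growth assumed in the recursive construction, the tail $\norm{H-H_n}_{R_n,\rho_0}$ is controlled by twice its first term, hence tends to zero as $\norm{k_{n+1}}\to\infty$. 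A Cauchy estimate on the shrunk polydisc $D_{R_n,\rho_0/2}$ then yields the corresponding smallness of the Hamiltonian vector field $X_H-X_{H_n}$ on the real slice.

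Finally I would invoke Gronwall's inequality on $D_{R_n,\rho_0/2}\cap(\T^d\times\R^d)$: if $L$ is a Lipschitz constant for $X_{H_n}$ on this set and $\varepsilon:=\norm{X_H-X_{H_n}}_{C^0}$, then so long as both orbits remain in the set one has $\norm{\Phi_H^t(z)-\Phi_n^t(z)}\leq\varepsilon(e^{Lt}-1)/L$. Choosing $\norm{k_{n+1}}$ large enough that this error is below, say, one tenth of the escape margin transfers each strict inequality $\norm{\Phi_n^t(z)}>n$ or $\norm{\Phi_n^t(z)}>\norm{z}^{-1}$ to the flow of $H$. The only delicate point, and the one I would double-check, is $(\mathcal{P}^2_n)$: the admissible time $t\leq\exp(C\norm{z}^{-1/(\tau+1)})$ is very large and the Gronwall factor $e^{LT}$ correspondingly enormous; but $L$ and $T$ depend only on $H_n$ and $z$, which are fixed once $k_2,\ldots,k_n$ have been chosen, so this enormous but fixed factor is simply absorbed by taking $\norm{k_{n+1}}$ much larger still. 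All six cases then follow from the same template.
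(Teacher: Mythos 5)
Your proposal is correct and takes essentially the same approach as the paper, whose entire proof consists of the two-sentence observation that, by Gronwall, the conditions $(\mathcal{P}^i_n)$ are open in the $C^3$ topology on the Hamiltonian and that $\norm{H-H_n}_{C^3}\to 0$ as $k_{n+1}\to\infty$. Your write-up merely makes explicit what the paper leaves implicit: the finite-time, bounded-domain reduction (including the upper-semicontinuity/compactness argument for $(\mathcal{P}^5_n)$ and $(\mathcal{P}^6_n)$), the control of the whole tail $H-H_n$ by its first term under the recursive growth of the $k_j$, and the absorption of the (fixed, since it depends only on $k_2,\ldots,k_n$) Gronwall factor by the choice of $k_{n+1}$.
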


\begin{proof} It follows from the Gronwall inequalities that the conditions ${(\mathcal{P}^i_{n})}$
are open in the $C^3$ topology on the Hamiltonian. Hence, the lemma follows from the fact that $\norm{H-H_n}_{C^3} \to 0$ as $k_{n+1} \to \infty$. 
\end{proof} \bk

\subsection{Diffusion at finite scales}
\label{sec_proof1}

We now verify the diffusion properties ${(\mathcal{P}^i_{n})}$ for the flows 
$\Phi_n^t(\cdot)$ of $H_n$ in the various cases. 

\medskip 

\begin{proposition} \label{prop.diffusion} There exists a sequence $\{k_n\}\subset \Z^{d-1}$ and $N>0$ such that for all $n\geq N$, the following holds:

In case { $i)$}, $\Phi_n$ satisfies $(\mathcal{P}_{n}^1)$. 

In case { $ii)$}, $\Phi_n$ satisfies ${(\mathcal{P}^2_n)}$. 

In case { $iii)$}, $\Phi_n$ satisfies ${(\mathcal{P}^3_n)}$. 

In case { $iv)$}, $\Phi_n$ satisfies ${(\mathcal{P}^4_n)}$. 

In case { $v)$}, $\Phi_n$ satisfies ${(\mathcal{P}^5_{n})}$.  

In case { $vii)$}, $\Phi_n$ satisfies ${(\mathcal{P}^6_n)}$.  

\end{proposition}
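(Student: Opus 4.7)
My starting point is that the flow of $H_n$ is explicitly integrable: since $H_n = H_0\circ\Psi_n$ with $H_0(\theta,r)=\langle\omega(s),r\rangle$, we have $\Phi_n^t = \Psi_n^{-1}\circ\Phi_{H_0}^t\circ\Psi_n$. Alternatively, Hamilton's equations for $H_n$ give directly $\dot s=0$, $\dot{\tilde\theta}=\tilde\omega(s)$, and
\[
\tilde r(t)-\tilde r(0)=\sum_{j=2}^{n}2\pi\,k_j\,\phi_j(s)\int_0^t\cos\bigl(2\pi(a_j+u\,b_j(s))\bigr)\,du,
\]
where $a_j=\langle k_j,\tilde\theta(0)\rangle$ and $b_j(s)=\langle k_j,\tilde\omega(s)\rangle$. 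Each integral equals $t\cos(2\pi a_j)$ when $b_j(s)=0$ and is otherwise bounded in absolute value by $(\pi|b_j(s)|)^{-1}$. So a good initial condition makes the $n$-th term act in (near) resonance and generate drift, while all other terms remain bounded.

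\textbf{Inductive construction.} I would construct $\{k_j\}$ and (in cases $i),ii)$) the companion sequence $\{s_j\}$ one step at a time, choosing $k_{n+1}$ after $k_2,\dots,k_n$. In cases $i)$ and $ii)$ I use Lemma \ref{lemma_sequences} to produce $s_{n+1}$ with $\langle\tilde\omega(s_{n+1}),k_{n+1}\rangle=0$ and, by a small perturbation of $s_{n+1}$ (taking $k_{n+1}$ large guarantees plenty of room within the Lemma), ensure $|\langle\tilde\omega(s_{n+1}),k_j\rangle|\geq \eta_n>0$ for every $j\leq n$, where $\eta_n$ depends only on the already-fixed data. Then with $a_{n+1}=0$ the $n{+}1$-st term contributes $\cdot$ linear drift of rate $\gtrsim\|k_{n+1}\|\phi_{n+1}(s_{n+1})$, while the older terms contribute a total of at most $C_n\max_{j\leq n}\phi_j(s_{n+1})/\eta_n$, which is negligible because $\phi_j(s_{n+1})\to0$ as $s_{n+1}\to0$. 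In cases $iii)$--$vii)$ I extract $\{k_j\}$ as a very thin subsequence of the Liouville sequence $\{\bar k_j\}$ from \eqref{eq_liouville_cond}, so that $|b_j|=|\langle\tilde\omega,k_j\rangle|\leq e^{-u_j\|k_j\|}$ with $u_j\to\infty$ as fast as needed; this lets me use the $n$-th term linearly for times up to $\sim 1/|b_n|$, which dwarfs any polynomial-in-$r^{-1}$ time bound.

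\textbf{Verifying each $(\mathcal P_n^i)$.} For $(\mathcal P_n^1)$ I take $s=s_n$, use the exact-resonance linear drift with rate $\sim \|k_n\|\phi_n(s_n)=\|k_n\|s_n^n e^{-n\|k_n\|}$, and simply wait long enough to exceed size $n$. For $(\mathcal P_n^2)$ the same argument applies with $\phi_n(s)=s^n e^{-C\|k_n\|/2}$; the prescribed time budget $\exp(C|s_n|^{-1/(\tau+1)})$ dominates the needed $|s_n|^{-1-n}e^{C\|k_n\|/2}/\|k_n\|$ precisely because of the Dirichlet inequality $\|k_n\|<|s_n|^{-1/(\tau+1)}$ in \eqref{eq_dir2}. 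For $(\mathcal P_n^3)$ I pick $r_n\sim e^{-n\|k_n\|/(n-1)}$ and take $s\sim r_n$: evaluating the drift at $t=r_n^{-2n}$ gives $\|k_n\|r_n^{-n}e^{-n\|k_n\|}\geq r_n^{-1}$ by the choice of $r_n$, and the Liouville smallness of $|b_n|$ guarantees $1/|b_n|\gg r_n^{-2n}$ so the $n$-th term is still in its linear regime. For $(\mathcal P_n^4)$ the same scheme works with $r_n\sim\|k_n\|e^{-n\|k_n\|}$ because the powers of $s$ are now only quadratic. For $(\mathcal P_n^5)$ and $(\mathcal P_n^6)$ I exploit that $\phi_j(s)=b_j s^j e^{\|k_j\|\sigma(s)}$ (with $\sigma(s)=s$ or $s^2$) means the $b_j$ cancels with the $(\pi|b_j|)^{-1}$ in the integral bound, yielding oscillations of amplitude $\sim\|k_n\|s^n e^{\|k_n\|\sigma(s)}$ which, for $|s|\geq1/n$, can be made to exceed $n$ uniformly on the prescribed cubes by taking $\|k_n\|$ large enough; some time $t\leq1/|b_n|$ realizes that amplitude.

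\textbf{Main obstacle.} The subtle point in every case is ensuring that the contributions from the previous terms $j<n$ do not cancel the drift from the $n$-th term on the time scales under consideration. In cases $i),ii)$ this requires separating the resonance $s_n$ from the resonance sets of $k_2,\dots,k_{n-1}$ (doable because each $\langle\tilde\omega(\cdot),k_j\rangle$ vanishes on a discrete set), and in cases $iii)$--$vii)$ it requires quantifying how fast the Liouville subsequence must grow so that the tail remainders $\|k_j\|\phi_j(s)/|b_j|$ for $j<n$ are dwarfed by the drift produced by the $n$-th term. Both points reduce to a quantitative bookkeeping that is carried out uniformly in the inductive construction.
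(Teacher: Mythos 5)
Your proposal is correct and follows essentially the same route as the paper's proof: explicit integration of $\dot{\tilde r}$ along the linear flow $\tilde\theta(t)=\tilde\theta(0)+t\tilde\omega(s)$, with the $n$-th (exactly or nearly resonant) term producing linear drift while the earlier terms are bounded by $\|k_j\|\phi_j(s)/(\pi|\langle k_j,\tilde\omega(s)\rangle|)$, together with an inductive choice of a fast-growing $\{k_j\}$ (and, in cases $i)$--$ii)$, of $\{s_j\}$ via Lemma \ref{lemma_sequences}) to keep the old terms away from resonance at $s_n$. Your explicit scale choices (e.g. $r_n\sim e^{-n\|k_n\|/(n-1)}$ versus the paper's $s_n=e^{-n^2\|k_n\|}$ in case $iii)$) differ only in inessential constants.
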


\begin{proof}
 We start with case $i)$. Consider the initial condition $z=( \theta,  r)$ with
 \begin{equation}
\label{eq_initial_condition}
   \theta=(0,\ldots, 0,0), \quad r=(0, \ldots, 0, s_n),
\end{equation}
where $\{s_n\}$ is the corresponding sequence for $\{k_n\}$ in Lemma \ref{lemma_sequences}. We can assume $\abs{s_n}\leq n^{-1}$, which implies $\norm{z}\leq n^{-1}$. 
It follows from the expression of the Hamiltonian $H_n$ that along the orbit of $z$ we have 
$\dot{s}=-\frac{\partial H_n}{\partial \Theta_d}=0$ and also $\dot{\tilde{\theta}}=\tilde{\omega}_n:=\tilde{\omega}(s_n)$, 
hence from { $c)$} in Lemma \ref{lemma_sequences} we have $\langle k_n, \tilde{\theta}(t)\rangle\equiv0$. Therefore the corresponding flow becomes 
$\Phi_{n}^t(z)=(\tilde{\omega}_n t, \theta_d(t), \tilde{r}(t), s_n)$ with
\begin{equation*}
\label{eq_r_flow}
 \tilde{r}(t)=A_n(t)+ B_n(t),
\end{equation*}
where
\begin{equation*}
\label{eq_A_B}
 \begin{array}{l}
   A_n(t)=2\pi k_n s_n^n e^{-n\norm{k_n}}  t,\\ 
   B_n(t)=\sum_{2\leq j< n}{s_n^j \cfrac{ e^{-j\norm{k_j}}  }{\langle k_j, \tilde{\omega}_n \rangle} k_j \sin(2\pi \langle k_j, \tilde{\omega}_n \rangle t)}.
 \end{array}
\end{equation*}
Then since $B_n(t)$ is bounded there exists $t>0$ such that $\norm{\tilde r(t)}>n$, which implies that $\Phi_n$ satisfies $( \mathcal{P}_{n}^1)$.

Consider now the case {$ii)$}, where $\tilde{\om} \notin \Omega^{d-1}_{\tau}$. For $z$ as in \eqref{eq_initial_condition}, $\Phi^t_n$ is as above with
\begin{equation*}
\label{eq_r_flow2}
 \tilde{r}(t)=A_n(t)+  B_n(t),
\end{equation*}
where now
\begin{equation*}
\label{eq_A_B2}
 \begin{array}{l}
    A_n(t)=2\pi k_n s_n^n e^{-C/2\norm{k_n}}  t,\\ 
    B_n(t)=\sum_{ 2\leq j< n}{s_n^j \cfrac{ e^{-C/2\norm{k_j}}  }{\langle k_j, \tilde{\omega}_n \rangle} k_j \sin(2\pi \langle k_j, \tilde{\omega}_n \rangle t)}.
 \end{array} 
\end{equation*}
Then if $t:=\exp (C \abs{s_n}^{-(\tau+1)^{-1}})$, we get from 
\eqref{eq_dir2} that for $n$ sufficiently large $\norm{A_n(t)}\geq 2 \abs{s_n}^{-1}$.
By considering $\{s_j\}$ to decrease fast enough, we can assume that
\begin{equation*}
\abs{\langle k_j, \tilde{\omega}_n \rangle}=\abs{s_j-s_n}\abs{k_{j,1}}>\abs{s_n}, \quad 2\leq j<n,
\end{equation*}
 and so $\norm{B_n(t)}<1$ as well. We conclude that 
$ \norm{\tilde{r}(t)}>\abs{s_n}^{-1}= \norm{z}^{-1}$,
which implies that $\Phi_n$ satisfies ${(\mathcal{P}^2_n)}$. 

Consider case $iii)$. We define $z$ as in cases $i)$ and $ii)$ but with $s_n:=e^{-n^2 \norm{k_n}}$. The flow becomes  
$\Phi_{n}^t(z)=(\tilde{\omega} t, \theta_d(t), \tilde{r}(t), s_n)$ with
\begin{equation*}
\label{eq_r_flow3}
 \tilde{r}(t)=A_n(t)+ B_n(t),
\end{equation*}
where now
\begin{equation*}
\label{eq_A_B3}
 \begin{array}{l}
   A_n(t)= s_n^n \cfrac{e^{-n\norm{k_n}}}{\langle k_n, \tilde{\omega} \rangle} k_n \sin(2\pi \langle k_n, \tilde{\omega} \rangle t)   ,\\ 
   B_n(t)=\sum_{2\leq j< n}{s_n^j \cfrac{ e^{-j\norm{k_j}}  }{\langle k_j, \tilde{\omega} \rangle} k_j \sin(2\pi \langle k_j, \tilde{\omega} \rangle t)}.
 \end{array}
\end{equation*}

\vspace{0.2cm}

We assume as before that $\norm{k_n}$ grows sufficiently fast to guarantee that $\norm{B_n(t)}<1$. 
Also due to \eqref{eq_liouville_cond} we can assume that $k_n$ are chosen in such a way that $\abs{\langle \tilde{\omega}, k_n \rangle} \leq e^{-n^4{\norm{k_n}}}$. 
Hence for $t:=s_n^{-2n}=e^{2n^3{\norm{k_n}}}$ and $n$ big enough, $|\sin(2\pi \langle k_n, \tilde{\omega} \rangle t)|> |\langle k_n, \tilde{\omega} \rangle| t$. Therefore $\norm{\tilde r(t)}>s_n^{-1}$, which implies that $\Phi_n$ satisfies { $(\mathcal{P}_{n}^3)$}.
\medskip

The proof of case $iv)$ follows exactly in the same way, with the same choice of $s_n$ and the initial condition $z$,  
but with $s_n^2$ in place of $s_n^n$ and $s_n^j$ in the expressions of $A_n$ and $B_n$, which allows to take $t:=s_n^{-4}$ and obtain 
{$(\mathcal{P}_{n}^4)$} instead of {$(\mathcal{P}_{n}^3)$}.

\medskip

We consider now case $v)$. For any initial condition $z=(\theta, r)$, the flow of $H_n$ satisfies $\Phi_{n}^{t}(z)=(\tilde{\theta}+\tilde{\omega} t, \theta_{d}(t), \tilde{r}(t), s)$, where 
 \begin{equation*}
  \tilde{r}(t)=\tilde{r}+ \sum_{j=2}^{n}{k_j {s^j} e^{\norm{k_j} s} \sin(2\pi \langle k_j, \tilde{\theta}+ \tilde{\omega} t \rangle)}-
  \sum_{j=2}^{n}{k_j {s^j} e^{\norm{k_j} s} \sin(2\pi \langle k_j, \tilde{\theta} \rangle)}.
 \end{equation*}
 Notice that if we define $\tau_n:=\abs{\langle k_n, \tilde{\omega} \rangle}^{-1}$, then there exists $0<t< \tau_n$ such that $\abs{\sin(2\pi \langle k_n, \tilde{\theta}+\tilde{\omega} t \rangle)- \sin(2\pi \langle k_n, \tilde{\theta}\rangle)}=1$.
 Therefore by choosing $\{k_n\}$ increasing sufficiently fast in norm we can impose that for all $z \in Q_n^+$ there exits $t>0$ such that  
\begin{equation*}
 \norm{\tilde{r}(t)} \geq \bigg{|} \frac{\norm{k_n}}{n^j}e^{\norm{k_n}/n}-\sum_{j=2}^{n-1}{n^j \norm{k_j} e^{n \norm{k_j}}} -n\bigg{|}>n.
\end{equation*}
Case $vii)$ is similar to case $v)$ except for the fact that the exponent $\norm{k_j} s^2$ is positive yields diffusion on all $Q_n$ instead of $Q_n^+$. \end{proof} \bk

\subsection{Concluding the proofs} We can now finish the proofs of Theorems \ref{th1}--\ref{th4}. 

\begin{proofof}{Theorem \ref{th1}} The convergence of $H_n$ was proved in Proposition \ref{prop.convergence}. The characterization of the BNF was proved in Proposition \ref{prop.BNF}. The instability comes from the fact that the flow of $H$ satisfies $(\mathcal{P}_{\infty}^1)$, which follows from Propositions \ref{prop.approximation} and \ref{prop.diffusion} (provided the sequence $\{k_j\}$ is chosen to grow sufficiently fast).
It is left to verify that if $\omega(s)=\bar{\omega}(s)$ and $\omega$ is Diophantine then $\cT_0$ is KAM stable. From Theorem \ref{mB}, it suffices  to see that $\bar{N}$ satisfies the R\"ussmann non-degeneracy condition,
namely that there does not exist any vector $\gamma\neq 0$ such that for every $r$ in some neighbourhood of $\cT_0$

\begin{equation}
\label{eq_russman}
\langle \nabla\bar{N}(r),\gamma \rangle=0.
\end{equation}
In our case we have 
\begin{equation*}
\nabla \bar N(r)=(\omega_1+s, \ldots, \omega_{d-1}+s^{d-1}, \omega_d+\sum_{l=1}^{d-1}{r_l ls^{l-1}} ),
\end{equation*}
and it is readily seen that \eqref{eq_russman} forces $\gamma$ to be zero. 
Hence $\bar{N}$ is R\"ussmann non-degenerate. 
This finishes the proof of Theorem \ref{th1}.\end{proofof} \bk 

\begin{proofof}{Theorem \ref{th2}}   The convergence of $H_n$ was proved in Proposition \ref{prop.convergence}. 
The characterization of the BNF was proved in Proposition \ref{prop.BNF}. 
The upper bound on the diffusion times comes from the fact that the flow of $H$ satisfies { $(\mathcal{P}_{\infty}^2)$}, 
which follows from Propositions \ref{prop.approximation}  and \ref{prop.diffusion} (provided the sequence $\{k_j\}$ is chosen to grow sufficiently fast).
\end{proofof} \bk 

\begin{proofof}{Theorem \ref{th03}}  The convergence of $H_n$ was proved in Proposition \ref{prop.convergence}. 
The characterization of the BNF for part {a)} was proved in Proposition \ref{prop.BNF}. 
The estimate on the diffusion times comes from the flow of $H$ satisfying {$(\mathcal{P}_{\infty}^3)$} or {$(\mathcal{P}_{\infty}^4)$}, 
that follow as in the proof of Theorem \ref{th2} from Propositions \ref{prop.approximation} and \ref{prop.diffusion}. 
\end{proofof} \bk

\begin{proofof}{Theorem \ref{th3}}  The convergence of $H_n$ was proved in Proposition \ref{prop.convergence}. 
The characterization of the BNF was proved in Proposition \ref{prop.BNF}. The diffusion for $r_d>0$ comes from the flow of $H$ satisfying {$(\mathcal{P}_{\infty}^5)$}, which holds again due to Propositions \ref{prop.approximation} and \ref{prop.diffusion}. 
The integrability on $M^-$ was proved in Proposition \ref{prop.conjugacy}. 
\end{proofof} \bk 

\begin{proofof}{Theorem \ref{th3bis}}  The convergence of $H_n$ was proved in Proposition \ref{prop.convergence}. 
The characterization of the BNF was proved in Proposition \ref{prop.BNF}. The $C^{l}$ and not $C^{l+1}$ integrability was proved  in Proposition \ref{prop.conjugacy}. 
\end{proofof} \bk 

\begin{proofof}{Theorem \ref{th4}}  The convergence of $H_n$ was proved in Proposition \ref{prop.convergence}. 
The characterization of the BNF was proved in Proposition \ref{prop.BNF}. The diffusion for $r_d\neq 0$ comes from the flow of $H$ satisfying {$(\mathcal{P}_{\infty}^6)$},  again by Propositions \ref{prop.approximation} and \ref{prop.diffusion}.
\end{proofof} \bk

\bigskip

\noindent \textbf{\bg Acknowledgements.} We would like to thank Abed Bounemoura, Håkan Eliasson, Rapha\"el Krikorian, Maria Saprykina and Mikhail Sevryuk for many useful discussions all along this work. We are also very grateful to the referee for many useful suggestions and comments.
This work has been supported by the Swedish Research Council (VR 2015-04012) and the Knut and Alice Wallenberg foundation (KAW 2016.0403).
\bigskip


\begin{thebibliography}{10}

\bibitem[Arn63]{arnold}
V.~I. Arnold.
\newblock Proof of a theorem of {A}. {N}. {K}olmogorov on the preservation of
  conditionally periodic motions under a small perturbation of the
  {H}amiltonian.
\newblock {\em Uspehi Mat. Nauk}, 18(5 (113)):13--40, 1963.

\bibitem[BFN17]{BFN}
A.~Bounemoura, B.~Fayad, and L.~Niederman.
\newblock Superexponential stability of quasi-periodic motion in {H}amiltonian
  systems.
\newblock {\em Comm. Math. Phys.}, 350(1):361--386, 2017.

\bibitem[BFN20]{BFN2}
A.~Bounemoura, B.~Fayad, and L.~Niederman.
\newblock Super-exponential stability for generic real-analytic elliptic
  equilibrium points.
\newblock {\em Adv. Math.}, 366:107088, 30, 2020.

\bibitem[Bir66]{Bi}
G.~D. Birkhoff.
\newblock {\em Dynamical systems}.
\newblock With an addendum by Jurgen Moser. American Mathematical Society
  Colloquium Publications, Vol. IX. American Mathematical Society, Providence,
  R.I., 1966.
  
\bibitem[BKZ16]{BKZ16}
P.~Bernard, V.~Kaloshin, and K.~Zhang.
\newblock Arnold diffusion in arbitrary degrees of freedom and normally
  hyperbolic invariant cylinders.
\newblock {\em Acta Math.}, 217(1):1--79, 2016.


\bibitem[Bou16]{Bounemoura}
A.~Bounemoura.
\newblock Non-degenerate {L}iouville tori are {KAM} stable.
\newblock {\em Adv. Math.}, 292:42--51, 2016.

\bibitem[Dou88]{douady}
R.~Douady.
\newblock Stabilit\'{e} ou instabilit\'{e} des points fixes elliptiques.
\newblock {\em Ann. Sci. \'{E}cole Norm. Sup. (4)}, 21(1):1--46, 1988.

\bibitem[EFK15]{EFK}
L.~H. Eliasson, B.~Fayad, and R.~Krikorian.
\newblock Around the stability of {KAM} tori.
\newblock {\em Duke Math. J.}, 164(9):1733--1775, 2015.

\bibitem[Eli89]{E1}
L.~H. Eliasson.
\newblock Hamiltonian systems with linear normal form near an invariant torus.
\newblock In {\em Nonlinear dynamics ({B}ologna, 1988)}, pages 11--29. World
  Sci. Publ., Teaneck, NJ, 1989.

\bibitem[Eli96]{eliasson}
L.~H. Eliasson.
\newblock Absolutely convergent series expansions for quasi periodic motions.
\newblock {\em Math. Phys. Electron. J.}, 2:Paper 4, 33 pp.\, 1996.

\bibitem[Fay18]{Fpoint}
B.~Fayad.
\newblock Lyapunov unstable elliptic equilibria.
\newblock {\em Preprint}, 2018.

\bibitem[FK04]{faka}
B.~Fayad and A.~Katok.
\newblock Constructions in elliptic dynamics.
\newblock {\em Ergodic Theory Dynam. Systems}, 24(5):1477--1520, 2004.

\bibitem[FK09]{FKrik}
B.~Fayad and R.~Krikorian.
\newblock Herman's last geometric theorem.
\newblock {\em Ann. Sci. \'{E}c. Norm. Sup\'{e}r. (4)}, 42(2):193--219, 2009.

\bibitem[FK18]{FKicm}
B.~Fayad and R.~Krikorian.
\newblock Some questions around quasi-periodic dynamics.
\newblock In {\em Proceedings of the {I}nternational {C}ongress of
  {M}athematicians---{R}io de {J}aneiro 2018. {V}ol. {III}. {I}nvited
  lectures}, pages 1909--1932. World Sci. Publ., Hackensack, NJ, 2018.

\bibitem[FMS17]{FMS17}
B.~Fayad, J.-P. Marco, and D.~Sauzin.
\newblock Attracted by an elliptic fixed point.
\newblock {\em Preprint}, 2017.

\bibitem[FS17]{FSpoint}
B.~Fayad and M.~Saprykina.
\newblock Isolated elliptic fixed points for smooth {H}amiltonians.
\newblock In {\em Modern theory of dynamical systems}, volume 692 of {\em
  Contemp. Math.}, pages 67--82. Amer. Math. Soc., Providence, RI, 2017.

\bibitem[GDF{\etalchar{+}}89]{DF}
A.~Giorgilli, A.~Delshams, E.~Fontich, L.~Galgani, and C.~Sim\'{o}.
\newblock Effective stability for a {H}amiltonian system near an elliptic
  equilibrium point, with an application to the restricted three-body problem.
\newblock {\em J. Differential Equations}, 77(1):167--198, 1989.

\bibitem[Her98]{herman_icm}
M.~Herman.
\newblock Some open problems in dynamical systems.
\newblock In {\em Proceedings of the {I}nternational {C}ongress of
  {M}athematicians, {V}ol. {II} ({B}erlin, 1998)}, number Extra Vol. II, pages
  797--808, 1998.

\bibitem[Ito89]{I}
H.~Ito.
\newblock Convergence of {B}irkhoff normal forms for integrable systems.
\newblock {\em Comment. Math. Helv.}, 64(3):412--461, 1989.

\bibitem[JV97]{jorba}
A.~Jorba and J.~Villanueva.
\newblock On the normal behaviour of partially elliptic lower-dimensional tori
  of {H}amiltonian systems.
\newblock {\em Nonlinearity}, 10(4):783--822, 1997.


\bibitem[Kat73]{K73}
A.~B. Katok.
\newblock Ergodic perturbations of degenerate integrable {H}amiltonian systems.
\newblock {\em Izv. Akad. Nauk SSSR Ser. Mat.}, 37:539--576, 1973.

\bibitem[KLS14]{KLS14}
V.~Kaloshin, M.~Levi, and M.~Saprykina.
\newblock Arnol'd diffusion in a pendulum lattice.
\newblock {\em Comm. Pure Appl. Math.}, 67(5):748--775, 2014.


\bibitem[LM05]{LM05}
P.~Lochak and J.-P. Marco.
\newblock Diffusion times and stability exponents for nearly integrable
  analytic systems.
\newblock {\em Cent. Eur. J. Math.}, 3(3):342--397, 2005.

\bibitem[MG95]{MG}
A.~Morbidelli and A.~Giorgilli.
\newblock Superexponential stability of {KAM} tori.
\newblock {\em J. Statist. Phys.}, 78(5-6):1607--1617, 1995.

\bibitem[MP10]{popov}
T.~Mitev and G.~Popov.
\newblock Gevrey normal form and effective stability of {L}agrangian tori.
\newblock {\em Discrete Contin. Dyn. Syst. Ser. S}, 3(4):643--666, 2010.

\bibitem[MS02]{jeanpierre}
J.-P. Marco and D.~Sauzin.
\newblock Stability and instability for {G}evrey quasi-convex near-integrable
  {H}amiltonian systems.
\newblock {\em Publ. Math. Inst. Hautes \'{E}tudes Sci.}, (96):199--275 (2003),
  2002.

\bibitem[PM03]{PM}
R.~P\'{e}rez-Marco.
\newblock Convergence or generic divergence of the {B}irkhoff normal form.
\newblock {\em Ann. of Math. (2)}, 157(2):557--574, 2003.

\bibitem[PW94]{wiggins}
A.~D. Perry and S.~Wiggins.
\newblock K{AM} tori are very sticky: rigorous lower bounds on the time to move
  away from an invariant {L}agrangian torus with linear flow.
\newblock {\em Phys. D}, 71(1-2):102--121, 1994.

\bibitem[R{\"u}s67]{russmann}
H.~R{\"u}ssmann.
\newblock \"{U}ber die {N}ormalform analytischer {H}amiltonscher
  {D}ifferentialgleichungen in der {N}\"{a}he einer {G}leichgewichtsl\"{o}sung.
\newblock {\em Math. Ann.}, 169:55--72, 1967.

\bibitem[R{\"u}s01]{russ}
H.~R{\"u}ssmann.
\newblock Invariant tori in non-degenerate nearly integrable {H}amiltonian
  systems.
\newblock {\em Regul. Chaotic Dyn.}, 6(2):119--204, 2001.

\bibitem[Sie54]{S55}
C.~L. Siegel.
\newblock \"{U}ber die {E}xistenz einer {N}ormalform analytischer
  {H}amiltonscher {D}ifferentialgleichungen in der {N}\"{a}he einer
  {G}leichgewichtsl\"{o}sung.
\newblock {\em Math. Ann.}, 128:144--170, 1954.

\bibitem[SM95]{SM}
C.~L. Siegel and J.~K. Moser.
\newblock {\em Lectures on celestial mechanics}.
\newblock Classics in Mathematics. Springer-Verlag, Berlin, 1995.
\newblock Translated from the German by C. I. Kalme, Reprint of the 1971
  translation.

\bibitem[Ste58]{sternberg}
S.~Sternberg.
\newblock On the structure of local homeomorphisms of euclidean {$n$}-space.
  {II}.
\newblock {\em Amer. J. Math.}, 80:623--631, 1958.

\bibitem[Vey78]{V}
J.~Vey.
\newblock Sur certains syst\`emes dynamiques s\'{e}parables.
\newblock {\em Amer. J. Math.}, 100(3):591--614, 1978.

\bibitem[XYQ97]{jiang}
J.~Xu, J.~You, and Q.~Qiu.
\newblock Invariant tori for nearly integrable {H}amiltonian systems with
  degeneracy.
\newblock {\em Math. Z.}, 226(3):375--387, 1997.

\bibitem[Zha11]{Zh11}
K.~Zhang.
\newblock Speed of {A}rnold diffusion for analytic {H}amiltonian systems.
\newblock {\em Invent. Math.}, 186(2):255--290, 2011.

\bibitem[Zun05]{N}
N.~T. Zung.
\newblock Convergence versus integrability in {B}irkhoff normal form.
\newblock {\em Ann. of Math. (2)}, 161(1):141--156, 2005.

\end{thebibliography}
\end{document}